\newtheorem{theorem}{Theorem}[section]
\theoremstyle{definition}
\newtheorem{remark}[theorem]{Remark}
\renewcommand{\appendix}{\par
   \setcounter{section}{0}%
   \setcounter{subsection}{0}%
   \setcounter{subsubsection}{0}%
   \gdef\thesection{\@Alph\c@section}%
   \gdef\thesubsection{\@Alph\c@section.\@arabic\c@subsection}%
   \gdef\theHsection{\@Alph\c@section.}%
   \gdef\theHsubsection{\@Alph\c@section.\@arabic\c@subsection}%
   \csname appendixmore\endcsname
 }
\numberwithin{equation}{section}
\begin{document}

\arraycolsep=1pt

\title{\bf\Large
New function classes of Morrey-Campanato type and their applications
\footnotetext{\hspace{-0.35cm} 2020 {\it
Mathematics Subject Classification}. Primary 46E36;
Secondary 46E35, 42B25, 42B20, 42B35, 30L99.
\endgraf {\it Key words and phrases.} Morrey-Campanato space, BMO function, H\"{o}lder continue functions,
Commutator, maximal operators.
\endgraf This project is partially supported by
the National Natural Science Foundation of China(No. 11971237, 12071223, 12101010) and Youth Foundation of Anhui Province (No. 2108085QA19)}}
\date{}
\author{Dinghuai Wang, Lisheng Shu}
\maketitle

\vspace{-0.8cm}

\begin{center}
\begin{minipage}{14cm}
{\small {\bf Abstract}\quad
The aim of this paper is to introduce and investigative some new function classes of Morrey-Campanato type. Let $0<p<\infty$ and $0\leq \lambda<n+p$. We say that $f\in \mathcal{\bar{L}}^{p,\lambda}(\Omega)$ if
$$\sup_{x_{0}\in \Omega,\rho>0}\rho^{-\lambda}\int_{\Omega(x_{0},\rho)}\big|f(x)-|f|_{\Omega(x_{0},\rho)}\big|^pdx<\infty,$$
where $\Omega(x_{0},\rho)=Q(x_{0},\rho)\cap \Omega$ and $Q(x,\rho)$ is denote the cube of $\mathbb{R}^n$. Some basic properties and characterizations of these classes are presented. If $0\leq \lambda<n$, the space is equivalent to related Morrey space. If $\lambda=n$, then $f \in \mathcal{\bar{L}}^{p,n}(\Omega)$ if and only if $f\in BMO(\Omega)$ with $f^{-}\in L^{\infty}(\Omega)$, where $f^{-}=-\min\{0,f\}$. If $n<\lambda\leq n+p$, the $\mathcal{\bar{L}}^{p,\lambda}(\Omega)$ functions establish an integral characterization of the nonnegative H\"{o}lder continue functions. As applications, this paper gives unified criterions on the necessity of bounded commutators of maximal functions.}
\end{minipage}
\end{center}

\vspace{0.1cm}

\tableofcontents

\vspace{0.1cm}

\section{Introduction}

The domains $\Omega \subset \mathbb{R}^n$ in this paper are supposed to satisfy the following property: there exists a constant $A>0$ such that for all $x_{0}\in \Omega, \rho<{\rm diam} \ \Omega$ we have
\begin{equation}\label{A}
|Q(x_{0},\rho)\cap \Omega|\geq A\rho^n,
\end{equation}
where $Q(x,\rho)$ is denote the cube of $\mathbb{R}^n$, with sides parallel to the coordinate axes, having a center at $x$ and side $2\rho$. We set $\Omega(x_{0},\rho):=Q(x_{0},\rho)\cap \Omega$. Note that every domain of class $C^1$ or Lipschitz has the above property.

Let $0< p<+\infty$ and $\lambda\geq 0$. Define the Morrey-Campanato space
\begin{equation*}\label{A}
\mathcal{L}^{p,\lambda}(\Omega):=\Big\{f\in L^{p}(\Omega):\sup_{x_{0}\in \Omega,\rho>0}\rho^{-\lambda}\int_{\Omega(x_{0},\rho)}|f(x)-f_{\Omega(x_{0},\rho)}|^{p}dx<\infty\Big\},
\end{equation*}
endowed with the seminorm defined by
$$\|f\|^{p}_{\mathcal{L}^{p,\lambda}(\Omega)}:=\sup_{x_{0}\in \Omega,\rho>0}\rho^{-\lambda}\int_{\Omega(x_{0},\rho)}|f(x)-f_{\Omega(x_{0},r)}|^pdx.$$

We list some known results on the structure of the Morrey-Campanato spaces.
\begin{itemize}
\item [(i)]
$\lambda=0.$ It is obvious that $\mathcal{L}^{p,0}(\Omega)=L^{p}(\Omega)$.

\item [(ii)]$0<\lambda<n.$ $\mathcal{L}^{p,\lambda}(\Omega)$ is equivalent to the Morrey space $L^{p,\lambda}(\Omega)$, i.e.
$$L^{p,\lambda}(\Omega):=\{f\in L^{p}(\Omega):\sup_{x_{0}\in \Omega,\rho>0}\rho^{-\lambda}\int_{\Omega(x_{0},\rho)}|f(x)|^{p}dx<\infty\},$$
endowed with the norm defined by
$$\|f\|^p_{L^{p,\lambda}(\Omega)}:=\sup_{x_{0}\in \Omega,\rho>0}\rho^{-\lambda}\int_{\Omega(x_{0},\rho)}|f(x)|^{p}dx.$$
This was proved by Campanato \cite{C1964}.

\item [(iii)] $\lambda=n.$ $\mathcal{L}_{1,n}(\mathbb{R}^n)=BMO(\mathbb{R}^n)$, the spaces of bounded mean oscillation. The crucial property of $BMO$ functions is the John-Nirenberg inequality \cite{JN1961},
$$|\{x\in Q: |f(x)-f_{Q}|>\lambda\}|\leq c_{1}|Q|e^{-\frac{c_{2}\lambda}{\|f\|_{BMO(\mathbb{R}^n)}}}, \lambda>0,$$
where $c_{1}$ and $c_{2}$ depend only on the dimension. A well-known immediate corollary of the John-Nirenberg inequality as follows:
$$\|f\|_{BMO(\mathbb{R}^n)}\approx \sup_{Q}\frac{1}{|Q|}\Big(\int_{Q}|f(x)-f_{Q}|^{p}dx\Big)^{1/p},$$
for all $1<p<\infty$. In fact, the equivalence also holds for $0<p<1$. See, for example, the work of Str\"{o}mberg \cite{Str1979}(or \cite{HT2019} and \cite{WZTams} for the general case).

\item [(iv)] $n<\lambda\leq n+p$. $\mathcal{L}_{1,n}(\Omega)=C^{0,\alpha}(\Omega)$ with $\alpha=(\lambda-n)/p$.
For $0<\alpha\leq 1$, the H\"{o}lder continuous functions $C^{0,\alpha}(\Omega)$ is the set of functions $f$ such that
$$\|f\|_{C^{0,\alpha}(\Omega)}:=\sup_{x,y\in \Omega \atop_{x\neq y}}\frac{|f(x)-f(y)|}{|x-y|^{\alpha}}<\infty.$$
This was shown independently by Campanato \cite{C1963} and by Meyers \cite{M1964} for $1\leq p<\infty$, and by the first author, Zhou and Teng \cite{WZTmn} for $0<p<1$.
\end{itemize}

The Morrey-Campanato spaces on Euclidean spaces play an important role in the study of partial differential equation; see \cite{M1967} and \cite{P1969}. Campanato spaces are useful tools in the regularity theory of PDEs as a result of their better structures, which allow us to give an integral characterization of the spaces of H\"{o}lder continuous functions. This also allows generalization of the classical Sobolev embedding theorems \cite{L2007, L1995, L1998}. It also well known result that Campanato space is the dual space of related Hardy space \cite{T1992}. Other types of Morrey-Campanato function have also received attention. X. T. Duong and L. X. Yan \cite{DY2005} introduced new function spaces of $BMO$ type, which are defined by variants of maximal functions associated with generalized approximations to the identity. Later, Tang \cite{T2007} generalized the results to the new function spaces of Morrey-Campanato type.
In this paper, we introduce some new function classes of Morrey-Campanato type as follows. Let $0< p<+\infty$ and $\lambda\geq 0$. Define the variant of the Morrey-Campanato class $\mathcal{\bar{L}}^{p,\lambda}(\Omega)$ with
$$\|f\|^{p}_{\mathcal{\bar{L}}^{p,\lambda}(\Omega)}:=\sup_{x_{0}\in \Omega,\rho>0}\rho^{-\lambda}\int_{\Omega(x_{0},\rho)}\Big|f(x)-|f|_{\Omega(x_{0},r)}\Big|^pdx<\infty,$$
where $|f|_{\Omega(x_{0},r)}=\frac{1}{|\Omega(x_{0},r)|}\int_{\Omega(x_{0},r)}|f(y)|dy$. Some properties and characterizations will be shown.

\vspace{0.3cm}

On the other hand, The $BMO$ space is special case of Morrey-Campanato spaces, which is one of the important function spaces in harmonic analysis. For example, the singular integral operator maps from  $L^{\infty}(\mathbb{R}^n)$ to $BMO(\mathbb{R}^n)$ space and the dual theory of the classical Hardy space. Moreover, the foundational paper of Coifman,Rochberg and Weiss \cite{CRW1976} proved that the commutator
$$[b,T](f):=bT(f)-T(bf)$$
is bounded on some Lebesgue spaces if and only if $b$ belongs to $BMO(\mathbb{R}^n)$, where $T$ is the Riesz transforms. The theory was then generalized to several directions, and on the theory of the boundedness of commutators, many results show that $BMO$ function is the right set. Specially, in 2000, Bastero, Milman and Ruiz \cite{BMR} studied the class of functions for which the boundedness of commutator with the Hardy-Littlewood maximal function
$$M(f)(x):=\sup_{x\in Q}\frac{1}{|Q|}\int_{Q}|f(y)|dy.$$
They proved that $[b,M]$ is bounded on $L^{p}(\mathbb{R}^n)$ if and only if $b\in BMO(\mathbb{R}^n)$ with $b^{-}\in L^{\infty}(\mathbb{R}^n)$, where $b^{-}(x)=-\min\{b(x),0\}$ and $1<p<\infty$. In fact, $b\in BMO(\mathbb{R}^n)$ with $b^{-}\in L^{\infty}(\mathbb{R}^n)$ is equivalent to $b \in \mathcal{\bar{L}}^{p,n}(\Omega)$ with $0<p<\infty$. Applying the properties of $\mathcal{\bar{L}}^{p,\lambda}(\Omega)$, this paper will gives unified criterions on the necessity of bounded commutators of maximal operators.

\vspace{0.3cm}

This paper is organized as follows. In Section \ref{MORREY}, for $0<\lambda<n$ and $1\leq p<\infty$, we obtain the equivalence relationship between $\mathcal{\bar{L}}^{p,\lambda}(\Omega)$ and certain classical Morrey spaces, applying the methods used in most of the previous works. Section \ref{BMO} is concerned with BMO function with its negative part bounded. There we settle some technical theorems for the equivalent definitions and characterizations. In Section \ref{LIP}, some characterization of $\mathcal{\bar{L}}^{p,\lambda}(\Omega)$ will be given.
Finally, Section \ref{BANACH} contains some further results for the new classes of Morrey-Campanato function, and we establish a general criterion for the necessity of bounded commutators of maximal functions for general Banach spaces.
\section{The case $0\leq \lambda<n$.}\label{MORREY}

For $1\leq p<\infty$ and $0\leq \lambda<n$, it is obvious that there holds a continuous embedding $L^{p,\lambda}(\Omega)\hookrightarrow \mathcal{\bar{L}}^{p,\lambda}(\Omega)$.
Now, we show that $L^{p,\lambda}(\Omega)=\mathcal{\bar{L}}^{p,\lambda}(\Omega)$. The approach for this part is similar to that in \cite{C1964} but we need
to carefully use the properties of $|f|_{Q}$.

\begin{theorem}
For $1\leq p<\infty$ and $0\leq \lambda<n$, we have $L^{p,\lambda}(\Omega)\approx \mathcal{\bar{L}}^{p,\lambda}(\Omega)$.
\end{theorem}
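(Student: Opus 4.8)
The plan is to establish the two continuous inclusions separately. The inclusion $L^{p,\lambda}(\Omega)\hookrightarrow \mathcal{\bar{L}}^{p,\lambda}(\Omega)$ is the elementary one already pointed out: from the convexity bound $|f(x)-|f|_{\Omega(x_0,\rho)}|^p\le 2^{p-1}(|f(x)|^p+|f|_{\Omega(x_0,\rho)}^p)$ together with Jensen's inequality $|f|_{\Omega(x_0,\rho)}^p\le \frac{1}{|\Omega(x_0,\rho)|}\int_{\Omega(x_0,\rho)}|f|^p$, one gets $\int_{\Omega(x_0,\rho)}|f-|f|_{\Omega(x_0,\rho)}|^p\le 2^p\int_{\Omega(x_0,\rho)}|f|^p$, hence $\|f\|_{\mathcal{\bar{L}}^{p,\lambda}(\Omega)}\le 2\|f\|_{L^{p,\lambda}(\Omega)}$. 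All the work lies in the reverse inclusion $\mathcal{\bar{L}}^{p,\lambda}(\Omega)\hookrightarrow L^{p,\lambda}(\Omega)$.

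For the reverse inclusion I would fix $x_0$ and $\rho$ and split $|f(x)|^p\le 2^{p-1}(|f(x)-|f|_{\Omega(x_0,\rho)}|^p+|f|_{\Omega(x_0,\rho)}^p)$. Integrating over $\Omega(x_0,\rho)$, the first term is at most $2^{p-1}\rho^\lambda\|f\|^p_{\mathcal{\bar{L}}^{p,\lambda}(\Omega)}$ by definition, while the second is at most $2^{p-1}(2\rho)^n|f|_{\Omega(x_0,\rho)}^p$ since $|\Omega(x_0,\rho)|\le(2\rho)^n$. Multiplying by $\rho^{-\lambda}$, the whole estimate therefore reduces to the uniform averaged bound $|f|_{\Omega(x_0,\rho)}\le C\rho^{(\lambda-n)/p}\|f\|_{\mathcal{\bar{L}}^{p,\lambda}(\Omega)}$, i.e. to controlling the nonnegative averages of $|f|$.

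The control of these averages is the Campanato-type iteration, and the single new ingredient—``carefully using the properties of $|f|_Q$''—is the reverse triangle inequality $\big||f(y)|-|f|_{\Omega(x_0,2\rho)}\big|\le \big|f(y)-|f|_{\Omega(x_0,2\rho)}\big|$, valid precisely because the center $|f|_{\Omega(x_0,2\rho)}$ is nonnegative; this lets the average of $|f|$ behave exactly like a Campanato constant. Using this inequality, then H\"older's inequality (where $p\ge 1$ enters), and then the structural lower bound $|\Omega(x_0,\rho)|\ge A\rho^n$ together with $|\Omega(x_0,2\rho)|\le(4\rho)^n$, the difference across consecutive dyadic scales satisfies
$$\big||f|_{\Omega(x_0,\rho)}-|f|_{\Omega(x_0,2\rho)}\big|\le \frac{|\Omega(x_0,2\rho)|^{1-1/p}}{|\Omega(x_0,\rho)|}\Big(\int_{\Omega(x_0,2\rho)}\big|f-|f|_{\Omega(x_0,2\rho)}\big|^p\,dx\Big)^{1/p}\le C\rho^{(\lambda-n)/p}\|f\|_{\mathcal{\bar{L}}^{p,\lambda}(\Omega)}.$$

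Finally I would telescope this estimate along the scales $\rho,2\rho,\dots,2^N\rho$. Because $\lambda<n$ the exponent $(\lambda-n)/p$ is strictly negative, so the geometric series $\sum_{j\ge 0}2^{j(\lambda-n)/p}$ converges and $\big||f|_{\Omega(x_0,\rho)}-|f|_{\Omega(x_0,2^N\rho)}\big|\le C\rho^{(\lambda-n)/p}\|f\|_{\mathcal{\bar{L}}^{p,\lambda}(\Omega)}$ uniformly in $N$. It then remains to discard the coarse-scale average: when $\Omega$ is bounded one takes $2^N\rho\ge\mathrm{diam}\,\Omega$, so that $\Omega(x_0,2^N\rho)=\Omega$ and $|f|_{\Omega(x_0,2^N\rho)}=|f|_{\Omega}$ is a fixed constant controlled by $\|f\|_{L^p(\Omega)}$; when $\Omega$ is unbounded one lets $N\to\infty$ and uses $|f|_{\Omega(x_0,R)}\le CR^{-n/p}\|f\|_{L^p(\Omega)}\to 0$. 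I expect this last step to be the main obstacle, and it is exactly where the hypothesis $\lambda<n$ is indispensable: a nonnegative exponent $(\lambda-n)/p$ would make the telescoping series diverge and the coarse-scale average would no longer be negligible, so the equivalence would break down.
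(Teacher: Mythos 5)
Your proof is correct and follows essentially the same route as the paper: the forward inclusion via convexity plus Jensen, and the reverse inclusion by reducing to the bound $|f|_{\Omega(x_0,\rho)}\le C\rho^{(\lambda-n)/p}\|f\|_{\mathcal{\bar{L}}^{p,\lambda}(\Omega)}$ and proving it by dyadic telescoping of consecutive-scale differences, which converges precisely because $(\lambda-n)/p<0$. The only variations are cosmetic: you obtain the consecutive-scale estimate from the reverse triangle inequality $\big||f(y)|-c\big|\le |f(y)-c|$ (valid since $c=|f|_{\Omega(x_0,2\rho)}\ge 0$) followed by H\"{o}lder, where the paper instead integrates the pointwise bound $\big||f|_{\Omega(x_0,R)}-|f|_{\Omega(x_0,r)}\big|^p\le 2^{p-1}\big(|f(x)-|f|_{\Omega(x_0,R)}|^p+|f(x)-|f|_{\Omega(x_0,r)}|^p\big)$ over the smaller set, and you additionally handle unbounded $\Omega$ by letting $N\to\infty$, whereas the paper's choice $\mathrm{diam}\,\Omega\le R\le 2\,\mathrm{diam}\,\Omega$ implicitly assumes $\Omega$ bounded.
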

\begin{proof}
We will freely use the inequality
$$(a+b)^{p}\leq 2^{p-1}(a^{p}+b^{p})$$
valid for every $p\geq 1$. Then
\begin{equation}\label{Morrey1}
\int_{\Omega(x_{0},\rho)}|f(x)-|f|_{\Omega(x_{0},\rho)}|^{p}dx\leq 2^{p-1}\bigg\{\int_{\Omega(x_{0},\rho)}|f(x)|^{p}dx+|\Omega(x_{0},\rho)|\big(|f|_{\Omega(x_{0},\rho)}\big)^{p}\bigg\}
\end{equation}
and by H\"{o}lder's inequality
\begin{equation}\label{Morrey2}
\big(|f|_{\Omega(x_{0},\rho)}\big)^{p}\leq \frac{1}{|\Omega(x_{0},\rho)|}\int_{\Omega(x_{0},\rho)}|f(x)|^pdx.
\end{equation}
Insert \eqref{Morrey2} to \eqref{Morrey1}, divide by $\rho^{\lambda}$ to obtain
$$\|f\|^{p}_{\mathcal{\bar{L}}^{p,\lambda}(\Omega)}\leq 2^{p}\|f\|^{p}_{L^{p,\lambda}(\Omega)},$$
thus concluding $L^{p,\lambda}(\Omega)\subset \mathcal{\bar{L}}^{p,\lambda}(\Omega)$.

On the other hand, it is easy to see that
\begin{equation}\label{Morrey3}
\int_{\Omega(x_{0},\rho)}|f|^{p}dx\leq 2^{p-1}\bigg\{\int_{\Omega(x_{0},\rho)}|f(x)-|f|_{\Omega(x_{0},\rho)}|^{p}dx+|\Omega(x_{0},\rho)|\big(|f|_{\Omega(x_{0},\rho)}\big)^{p}\bigg\}.
\end{equation}
We need to estimate the term $|f|_{\Omega(x_{0},\rho)}$ only. For $0<r<R$ and $x_{0},x\in \Omega$ we have
$$\big||f|_{\Omega(x_{0},R)}-|f|_{\Omega(x_{0},r)}\big|^{p}\leq 2^{p-1}\Big\{|f(x)-|f|_{\Omega(x_{0},R)}|^{p}+|f(x)-|f|_{\Omega(x_{0},r)}|^{p}\Big\}.$$
Integrating with respect to $x$ on  $\Omega(x_{0},r)$, we obtain
$$\big||f|_{\Omega(x_{0},R)}-|f|_{\Omega(x_{0},r)}\big|^{p}\leq
\frac{2^{p-1}}{Ar^n}\bigg(\int_{\Omega(x_{0},\rho)}\big|f(x)-|f|_{\Omega(x_{0},R)}\big|^{p}dx+\int_{\Omega(x_{0},r)}\big|f(x)-|f|_{\Omega(x_{0},r)}\big|^{p}dx\bigg),$$
thus
$$\big||f|_{\Omega(x_{0},R)}-|f|_{\Omega(x_{0},r)}\big|^{p}\leq C\frac{1}{r^n}(R^{\lambda}+r^{\lambda})\|f\|^{p}_{\bar{\mathcal{L}}^{p,\lambda}(\Omega)}
\leq C\frac{1}{r^n}R^{\lambda}\|f\|^{p}_{\bar{\mathcal{L}}^{p,\lambda}(\Omega)},$$
we arrive at
\begin{equation}\label{Morrey4}
\big||f|_{\Omega(x_{0},R)}-|f|_{\Omega(x_{0},r)}\big|\leq C\|f\|^{p}_{\bar{\mathcal{L}}^{p,\lambda}(\Omega)}R^{\frac{\lambda}{p}}r^{-\frac{n}{p}}.
\end{equation}
Set $R_{k}=\frac{R}{2^{k}}.$ The inequality \eqref{Morrey4} gives us that
\begin{equation}\label{Morrey5}
\big||f|_{\Omega(x_{0},R_{k})}-|f|_{\Omega(x_{0},R_{k+1})}\big|\leq C\|f\|_{\bar{\mathcal{L}}^{p,\lambda}(\Omega)}R^{\frac{\lambda-n}{p}}2^{\frac{k(n-\lambda)+n}{p}}.
\end{equation}
It follows from \eqref{Morrey5} that
\begin{equation}\label{Morrey6}
\big||f|_{\Omega(x_{0},R)}-|f|_{\Omega(x_{0},R_{h+1})}\big|\leq C\|f\|_{\bar{\mathcal{L}}^{p,\lambda}(\Omega)}R_{h+1}^{\frac{\lambda-n}{p}}.
\end{equation}
Choose $h$ and $R$ such that $diam \Omega\leq R\leq 2diam \Omega$ and $R_{h+1}=\rho$, we get
\begin{eqnarray*}
\begin{aligned}
|f|^{p}_{\Omega(x_{0},\rho)}&\leq 2^{p-1}\Big(|f|^{p}_{\Omega(x_{0},R)}+\big||f|_{\Omega(x_{0},\rho)}-|f|_{\Omega(x_{0},R)}\big|^{p}\Big)\\
&\leq 2^{p-1}\Big(|f|^{p}_{\Omega(x_{0},R)}+\rho^{\lambda-n}\|f\|^{p}_{\bar{\mathcal{L}}^{p,\lambda}(\Omega)}\Big).
\end{aligned}
\end{eqnarray*}
Combining with \eqref{Morrey3} and the fact that
$$\rho^{n-\lambda}|f|^{p}_{\Omega(x_{0},R)}\leq C|f|^{p}_{\Omega(x_{0},R)}\leq C\|f\|^p_{L^{p}(\Omega)},$$
we conclude that
\begin{eqnarray*}
\begin{aligned}
\frac{1}{\rho^{\lambda}}\int_{\Omega(x_{0},\rho)}|f(x)|^{p}dx\leq C\Big(\|f\|^{p}_{\bar{\mathcal{L}}^{p,\lambda}(\Omega)}+|f|_{\Omega(x_{0},R)}\Big)\leq C\|f\|^p_{\bar{\mathcal{L}}^{p,\lambda}(\Omega)}.
\end{aligned}
\end{eqnarray*}
We have that $L^{p,\lambda}(\Omega)\supseteq \mathcal{\bar{L}}^{p,\lambda}(\Omega)$ and the proof is complete.
\end{proof}

\section{The case $\lambda=n$.}\label{BMO}
The notion of functions of bounded mean oscillation was introduced and
studied by John and Nirenberg \cite{JN1961} in connection with the work of
John on quasi-isometric maps and of Moser on Harnack inequality.
Let $Q_{0}$ be a cube in $\mathbb{R}^n$. We say that a function $f\in L^{1}(Q_{0})$ belongs to the space of functions with bounded mean oscillation $BMO(Q_{0})$ if
\begin{equation}\label{bmo}
|f|_{*}:=\sup\frac{1}{|Q|}\int_{Q}|f-f_{Q}|dx<+\infty,
\end{equation}
where the supremum it taken over all the cubes $Q\subset Q_{0}$.

Commonly $BMO$ is defined in the whole of $\mathbb{R}^n$ by requiring $f\in L^{1}_{loc}(\mathbb{R}^n)$ and the supremum in \eqref{bmo} to be taken over all cubes in $\mathbb{R}^n$.
It is easy to see that $BMO(Q_{0})= \mathcal{L}^{1,n}(Q_{0})$. In this section, we shall in fact see later that the function $f\in \bar{\mathcal{L}}^{p,n}(Q_{0})$ for all $p, 0<p<\infty$ if and only if $f\in BMO(Q_{0})$ with $f^{-}\in L^{\infty}(Q_{0})$.

\subsection{John-Nirenberg Lemma}

To obtain the desired results, we need the following John-Nirenberg inequality for $\mathcal{\bar{L}}^{p,n}(Q_{0})$ with $0<p\leq 1$.

\begin{theorem}\label{JN}
Let $0<p\leq 1$. There are constants $c_{1},c_{2}>0$, depending only $n$, such that
\begin{equation}\label{eqJN}
\Big|\{x\in Q: \big|f(x)-|f|_{Q}\big|>t\}\Big|\leq c_{1}\exp\Big(-\frac{c_{2}t}{\|f\|_{\bar{\mathcal{L}}^{p,n}(Q_{0})}}\Big)|Q|
\end{equation}
for all $Q\subset Q_{0}$ with sides parallel to those of $Q_{0}$, all $f\in \bar{\mathcal{L}}^{p,n}(Q_{0})$ and all $t>0$ .
\end{theorem}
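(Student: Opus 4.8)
The plan is to reduce the inequality \eqref{eqJN} to the classical John--Nirenberg inequality for $BMO$ applied to the auxiliary function $g:=|f|$, after first showing that the negative part of $f$ is automatically bounded. Throughout write $K:=\|f\|_{\bar{\mathcal{L}}^{p,n}(Q_0)}$ and $g_Q:=|f|_Q=\frac{1}{|Q|}\int_Q|f|\,dx$, so that $g_Q$ is the genuine mean of $g$ over $Q$. The whole difficulty is concentrated in the discrepancy between centering at $|f|_Q$ (the mean of $|f|$) and centering at the true mean $f_Q$: the quantity $f-|f|_Q$ does not have vanishing average, so the usual symmetry of the $BMO$ argument is lost, and one must isolate the effect of the negative part.

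The main new step is to prove the pointwise bound $f^{-}(x)\le K/2$ for a.e.\ $x\in Q_0$. I would fix a Lebesgue point $x$ of $f$ (in the $L^p$ sense) with $f(x)<0$; for small $\rho$ the set $\Omega(x,\rho)$ is a genuine cube, $|f|_{\Omega(x,\rho)}\to|f(x)|=f^{-}(x)$, and $\frac{1}{|\Omega(x,\rho)|}\int_{\Omega(x,\rho)}|f(y)-f(x)|^p\,dy\to0$. Using subadditivity of $t\mapsto t^p$ for $0<p\le1$ in the form $|a|^p\ge|b|^p-|a-b|^p$, with $a=f(y)-|f|_{\Omega(x,\rho)}$ and $b=f(x)-|f|_{\Omega(x,\rho)}$, averaging over $\Omega(x,\rho)$ and letting $\rho\to0$ yields $K^p\ge|f(x)-f^{-}(x)|^p=(2f^{-}(x))^p$, whence $f^{-}(x)\le K/2$. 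This step exploits the one-sided structure and shows that no separate hypothesis on $f^{-}$ is needed.

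Next I would transfer the oscillation to $BMO$. For any cube $Q$ and any constant $c\ge0$ one has the elementary inequalities $\big||f(x)|-c\big|\le|f(x)-c|$ and $|f(x)-c|\le\big||f(x)|-c\big|+2f^{-}(x)$. Taking $c=g_Q$ in the first and integrating the $p$-th powers gives $\frac{1}{|Q|}\int_Q|g-g_Q|^p\,dx\le\frac{1}{|Q|}\int_Q|f-|f|_Q|^p\,dx\le K^p$, so $g\in\mathcal{L}^{p,n}(Q_0)$ with seminorm at most $K$; by the $L^p$ form of the John--Nirenberg equivalence recalled in item (iii) of the Introduction, $g\in BMO(Q_0)$ and the classical inequality furnishes constants $c_1,c_2>0$ with $|\{x\in Q:|g(x)-g_Q|>s\}|\le c_1 e^{-c_2 s/K}|Q|$ for all $s>0$. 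Taking $c=g_Q$ in the second inequality and combining with Step~1 gives $|f(x)-|f|_Q|\le|g(x)-g_Q|+2f^{-}(x)\le|g(x)-g_Q|+K$, so for $t>K$ we have the inclusion $\{x\in Q:|f(x)-|f|_Q|>t\}\subseteq\{x\in Q:|g(x)-g_Q|>t-K\}$, whose measure is at most $c_1 e^{c_2}e^{-c_2 t/K}|Q|$; for $0<t\le K$ the trivial estimate $\le|Q|$ suffices after enlarging $c_1$. This yields \eqref{eqJN} with constants independent of $f$, $Q$ and $t$.

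I expect the genuine obstacle to be Step~1 rather than the final assembly: once $f^{-}\in L^{\infty}$ with $\|f^{-}\|_{\infty}\le K/2$ is in hand, the reduction to the classical inequality for $g=|f|$ is essentially routine. The one delicate technical point is the limit $\rho\to0$ in the regime $0<p<1$, where Minkowski's inequality is unavailable; there I would rely on the subadditivity of $t\mapsto t^p$ as above instead of the triangle inequality. The resulting constants depend only on $n$ (and, through the passage from $L^p$ oscillation to $BMO$, on $p$).
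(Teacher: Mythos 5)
Your proof is correct, but it follows a genuinely different route from the paper's. The paper argues from scratch: it runs an iterated Calder\'on--Zygmund decomposition on $\big|f-|f|_{Q_0}\big|^p$, exactly as in the classical John--Nirenberg proof, the one new ingredient being the comparison of averages of $|f|$ across generations of stopping cubes, $\big||f|_{Q'}-|f|_{Q''}\big|^p\le 1+2^n\alpha$, where subadditivity of $t\mapsto t^p$ (valid for $0<p\le1$) stands in for the triangle inequality; the exponential bound then comes from the geometric decay of the total measure of the generations. You instead decouple the statement into: (i) the pointwise bound $f^-\le K/2$ a.e., $K=\|f\|_{\bar{\mathcal{L}}^{p,n}(Q_0)}$, from Lebesgue differentiation plus subadditivity (this step is correct; note that finiteness of the seminorm forces $f\in L^1(Q_0)$, so Lebesgue points are available); (ii) the observation that $g=|f|$ has $L^p$ mean oscillation at most $K$ on every subcube, hence $g\in BMO(Q_0)$ with $\|g\|_{BMO}\lesssim K$ by the $0<p<1$ mean-oscillation characterization of $BMO$ (Str\"omberg), which the paper itself records as known in item (iii) of its Introduction; and (iii) the identity $f=|f|-2f^-$, which transfers the classical John--Nirenberg inequality for $g$ to the desired bound after shifting $t$ by $K$. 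Your reduction is shorter and modular, and it proves more along the way: steps (i)--(ii) amount to a quantitative $0<p\le1$ version of the structure theorem the paper only establishes later (Theorem \ref{def}). It also delivers the decay $e^{-c_2t}$ cleanly; this is a genuine advantage, since the paper's iteration controls level sets of $\big|f-|f|_{Q_0}\big|^p$ at heights $i\,2^{n+1}\alpha$, i.e.\ level sets of $\big|f-|f|_{Q_0}\big|$ at heights of order $i^{1/p}$, and the conversion of that into $e^{-c_2t}$ rather than $e^{-c_2t^p}$ is a point the paper's write-up glosses over. The price you pay is importing Str\"omberg's self-improvement theorem as a black box where the paper is self-contained (only the Calder\'on--Zygmund lemma is used), and your constants consequently depend on $p$ as well as $n$. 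If you want constants depending only on $n$, as the statement asserts, replace the $L^p$ citation in step (ii) by its median (John--Str\"omberg) form: Chebyshev at height $K$ gives $|\{x\in Q:|g-g_Q|>K\}|\le 2^{-n}|Q|\le |Q|/2$, so every median of $g$ on $Q$ lies within $K$ of $g_Q$, and the John--Str\"omberg inequality then yields $\|g\|_{BMO}\le C_nK$ with a purely dimensional constant.
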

\begin{proof}
By $f\in \bar{\mathcal{L}}^{p,n}(Q_{0})\Rightarrow f\in \bar{\mathcal{L}}^{p,n}(Q)$, it is enough to prove \eqref{eqJN} for $Q=Q_{0}$ only. Let $\|f\|_{\bar{\mathcal{L}}^{p,n}(Q_{0})}=1$.
Take $$\alpha>1\geq \frac{1}{|Q_{0}|}\int_{Q_{0}}\big|f(x)-|f|_{Q_{0}}\big|^{p}dx$$
and applying the Calder\`{o}n-Zygmund decomposition with $f=\big|f-|f|_{Q_{0}}\big|^{p}$ and parameter $\alpha$, we can obtain a sequence $\{Q^{1}_{k}\}_{k\in K_{1}}$ satisfy
\begin{equation}\label{eqJN1}
\alpha<\frac{1}{|Q^{1}_{k}|}\int_{Q^{1}_{k}}\big|f(x)-|f|_{Q_{0}}\big|^{p}dx\leq 2^{n}\alpha, \quad \text{for all} \quad   k\in K_{1},
\end{equation}
and
\begin{equation}\label{eqJN2}
\big|f-|f|_{Q_{0}}\big|^{p}\leq \alpha \qquad  \text{a.e. \ on} \quad Q_{0}\backslash \bigcup_{k\in K_{1}}Q^{1}_{k}.
\end{equation}
By \eqref{eqJN1} we have
\begin{eqnarray*}\label{eqJN3}
\begin{aligned}
\big||f|_{Q^{1}_{k}}-|f|_{Q_{0}}\big|^{p}&=\frac{1}{|Q^{1}_{k}|}\int_{Q^{1}_{k}}\big||f|_{Q^{1}_{k}}-|f|_{Q_{0}}\big|^{p}dx\\
&\leq \frac{1}{|Q^{1}_{k}|}\int_{Q^{1}_{k}}\big|f(x)-|f|_{Q^1_{k}}\big|^{p}dx+ \frac{1}{|Q^{1}_{k}|}\int_{Q^{1}_{k}}\big|f(x)-|f|_{Q_{0}}\big|^{p}dx\\
&\leq 1+2^{n}\alpha\leq 2^{n+1}\alpha,
\end{aligned}
\end{eqnarray*}
and
\begin{equation}\label{eqJN4}
\sum_{k\in K_{1}}|Q^{1}_{k}|\leq \frac{1}{\alpha}\sum_{k\in K_{1}}\int_{Q^{1}_{k}}\big|f(x)-|f|_{Q_{0}}\big|^{p}dx
\leq \frac{1}{\alpha}\int_{Q_{0}}\big|f(x)-|f|_{Q_{0}}\big|^pdx\leq \frac{1}{\alpha}|Q_{0}|.
\end{equation}

We can apply again the Calder\`{o}n-Zygmund decomposition with $Q^1_{k}$, $f=\big|u-|u|_{Q^1_k}\big|$ and parameter $\alpha$, we also can find a sequence of cubes $\{Q^1_{k,j}\}_{j\in J(k)}$ such that
\begin{equation}\label{eqJN5}
\alpha<\frac{1}{|Q^{1}_{k,j}|}\int_{Q^{1}_{k,j}}\big|f(x)-|f|_{Q_{0}}\big|^{p}dx\leq 2^{n}\alpha, \quad \text{for all} \quad   j\in J_{k},
\end{equation}
and
\begin{equation}\label{eqJN6}
\big|f-|f|_{Q_{0}}\big|^{p}\leq \alpha \qquad  \text{a.e. \ on} \quad Q^1_{k}\backslash \bigcup_{j\in J_{k}}Q^{1}_{k,j}.
\end{equation}
Write
$$\{Q_{k,j}\}_{j\in J(k),k\in K_{1}}=\{Q^{2}_{k}\}_{k\in K_{2}}.$$
From \eqref{eqJN3} and \eqref{eqJN6}, for $x\in Q_{0}\backslash \bigcup_{k\in K_{2}}Q^{2}_{k}$, there is a unique index $k=k(x)\in K_{1}$ such that $x\in Q^1_k$, we get
\begin{equation*}\label{eqJN7}
\big|f(x)-|f|_{Q_{0}}\big|^{p}\leq \big|f(x)-|f|_{Q^1_{k}}\big|^{p}+\big||f|_{Q_{0}}-|f|_{Q^1_{k}}\big|^{p}\leq 2^{n+2}\alpha.
\end{equation*}
Moreover, it follows from \eqref{eqJN4} that
$$\sum_{j\in K_{2}}|Q_{k}^{2}|\leq \frac{1}{\alpha}\sum_{k\in K_{1}}\int_{Q^1_{k}}\big|f(x)-|f|_{Q^1_{k}}\big|^{p}dx
\leq \frac{1}{\alpha}\sum_{k\in K_{1}}|Q^1_{k}|\leq \frac{1}{\alpha^2}|Q_{0}|.$$
Repeating this procedure inductively, for every $k\in \mathbb{N}$ we can obtain a sequence of cubes $\{Q^i_{k}\}_{k\in K_i}$ such that
\begin{equation*}\label{eqJN8}
\big|f-|f|_{Q_{0}}\big|^{p}\leq i 2^{n+1}\alpha \qquad  \text{a.e. \ on} \quad Q_0\backslash \bigcup_{k\in K_{i}}Q^{i}_{k}.
\end{equation*}
and
\begin{equation*}\label{eqJN9}
\sum_{k\in K_{i}}|Q_{k}^{i}|\leq \frac{1}{\alpha^i}|Q_{0}|.
\end{equation*}

For $t>2^{n+1}\alpha$, choose $i\in\mathbb{N}$ in such a way that $i2^{n+1}\alpha<t\leq (i+1)2^{n+1}\alpha$. We set $c_{1}=\alpha$ and $c_{2}=\frac{\log \alpha}{2^{n+1}\alpha}$, then
\begin{eqnarray*}
\begin{aligned}
\Big|\{x\in Q_{0}: \big|f(x)-|f|_{Q_{0}}\big|>t\}\Big|&\leq \Big|\{x\in Q_{0}: \big|f(x)-|f|_{Q_{0}}\big|>i2^{n+1}\alpha\}\Big|\\
&\leq \sum_{k\in K_{i}}|Q^i_{k}|\leq \frac{1}{\alpha^i}|Q_{0}|\\
&\leq c_{1}e^{-c_{2}t}|Q_{0}|.
\end{aligned}
\end{eqnarray*}
For $0<t\leq 2^{n+1}\alpha$, the result is obtained directly. This implies the desired conclusion and completes the proof of the Theorem \ref{JN}.
\end{proof}

\begin{theorem}\label{p<1}
Let $0<p<1$ and $Q_{0}$ be a cube in $\mathbb{R}^n$. Then
$$\mathcal{\bar{L}}^{p,n}(Q_{0})=\mathcal{\bar{L}}^{1,n}(Q_{0})$$
with equivalence of the corresponding norms.
\end{theorem}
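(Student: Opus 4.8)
The plan is to prove the equivalence as a two–sided norm estimate. Throughout I may replace the normalizing factor $\rho^{-n}$ by $|\Omega(x_{0},\rho)|^{-1}$, since the two are comparable (combine the lower bound $|\Omega(x_{0},\rho)|\geq A\rho^{n}$ with the trivial bound $|\Omega(x_{0},\rho)|\leq (2\rho)^{n}$), so that it suffices to control normalized averages over cubes $Q\subset Q_{0}$. The inclusion $\mathcal{\bar{L}}^{1,n}(Q_{0})\subset\mathcal{\bar{L}}^{p,n}(Q_{0})$ is elementary and uses no John--Nirenberg input: for $0<p<1$ the function $t\mapsto t^{1/p}$ is convex, so Jensen's inequality with respect to the probability measure $dx/|Q|$ on each cube gives
$$\Big(\frac{1}{|Q|}\int_{Q}\big|f-|f|_{Q}\big|^{p}\,dx\Big)^{1/p}\leq \frac{1}{|Q|}\int_{Q}\big|f-|f|_{Q}\big|\,dx.$$
Raising to the $p$-th power and taking the supremum over $Q\subset Q_{0}$ yields $\|f\|_{\mathcal{\bar{L}}^{p,n}(Q_{0})}\leq C\|f\|_{\mathcal{\bar{L}}^{1,n}(Q_{0})}$.

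The reverse inclusion $\mathcal{\bar{L}}^{p,n}(Q_{0})\subset\mathcal{\bar{L}}^{1,n}(Q_{0})$ carries the actual content, and here I would invoke the John--Nirenberg inequality \eqref{eqJN} just proved. Fix $f\in\mathcal{\bar{L}}^{p,n}(Q_{0})$ and a cube $Q\subset Q_{0}$. Representing the $L^{1}$ oscillation through its distribution function (Cavalieri's principle),
$$\frac{1}{|Q|}\int_{Q}\big|f-|f|_{Q}\big|\,dx=\frac{1}{|Q|}\int_{0}^{\infty}\Big|\{x\in Q:\big|f(x)-|f|_{Q}\big|>t\}\Big|\,dt,$$
and inserting the exponential tail bound \eqref{eqJN} into the integrand, the integral converges and is controlled by the $\mathcal{\bar{L}}^{p,n}$ norm:
$$\frac{1}{|Q|}\int_{Q}\big|f-|f|_{Q}\big|\,dx\leq \int_{0}^{\infty}c_{1}\exp\Big(-\frac{c_{2}t}{\|f\|_{\mathcal{\bar{L}}^{p,n}(Q_{0})}}\Big)\,dt=\frac{c_{1}}{c_{2}}\|f\|_{\mathcal{\bar{L}}^{p,n}(Q_{0})}.$$
Taking the supremum over all $Q\subset Q_{0}$ gives $\|f\|_{\mathcal{\bar{L}}^{1,n}(Q_{0})}\leq C\|f\|_{\mathcal{\bar{L}}^{p,n}(Q_{0})}$, and combining the two estimates produces the asserted equivalence.

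Since the genuinely difficult ingredient, namely the exponential decay of the distribution function, has already been established in Theorem \ref{JN}, the main obstacle remaining is purely a matter of bookkeeping: one must check that the centering constant used in \eqref{eqJN} is the same $|f|_{Q}$ that appears in the $\mathcal{\bar{L}}^{1,n}$ seminorm, rather than the signed mean $f_{Q}$ of classical $BMO$ theory. Because both Theorem \ref{JN} and the $p=1$ seminorm are written with $|f|_{Q}$, the tail integration above goes through verbatim and no correction term is needed. I would finally remark that the same layer-cake argument, applied to the $q$-th power of the John--Nirenberg tail, shows more generally that all the seminorms $\|\cdot\|_{\mathcal{\bar{L}}^{q,n}(Q_{0})}$ with $0<q<\infty$ are mutually equivalent, the case $q=1$ stated here being the one used in the sequel.
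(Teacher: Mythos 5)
Your proof is correct and follows essentially the same route as the paper's: the easy inclusion via Jensen's inequality (which is the paper's H\"{o}lder step in disguise), and the substantive inclusion $\mathcal{\bar{L}}^{p,n}(Q_{0})\subset\mathcal{\bar{L}}^{1,n}(Q_{0})$ by inserting the exponential tail bound of Theorem \ref{JN} into the layer-cake representation of the $L^{1}$ oscillation. Your added care about the normalization $\rho^{-n}$ versus $|Q|^{-1}$ and about the centering constant $|f|_{Q}$ is sound bookkeeping that the paper leaves implicit.
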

\begin{proof}
Using \eqref{JN}, for any $Q\subset Q_{0}$ and $f\in \mathcal{\bar{L}}^{p,n}(Q_{0})$,
\begin{eqnarray*}
\begin{aligned}
\int_{Q}|f(x)-|f|_{Q}|dx&=\int_{0}^{\infty}\Big|\{x\in Q_{0}: \big|f(x)-|f|_{Q}\big|>t\}\Big|dt\\
&\leq c_{1}\int_{0}^{\infty}\exp{\big(\frac{-c_{2}t}{\|f\|_{\mathcal{\bar{L}}^{p,n}(Q_{0})}}}|Q|dt\\
&\leq C\|f\|_{\mathcal{\bar{L}}^{p,n}(Q_{0})}|Q|,
\end{aligned}
\end{eqnarray*}
then $f\in \mathcal{\bar{L}}^{1,n}(Q_{0})$.

Conversely, it is immediately that $\|f\|_{\mathcal{\bar{L}}^{p,n}(Q_{0})}\leq \|f\|_{\mathcal{\bar{L}}^{1,n}(Q_{0})}$ by H\"{o}lder inequality.
\end{proof}

Similarly, we also have the result for $1<p<\infty$ as follows.
\begin{theorem}\label{p>1}
Let $1<p<\infty$ and $Q_{0}$ be a cube in $\mathbb{R}^n$. Then
$$\mathcal{\bar{L}}^{p,n}(Q_{0})=\mathcal{\bar{L}}^{1,n}(Q_{0})$$
with equivalence of the corresponding norms.
\end{theorem}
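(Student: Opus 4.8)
The plan is to prove the two inclusions separately, exactly mirroring the proof of Theorem \ref{p<1} but with the two directions interchanged. The easy direction is now $\bar{\mathcal{L}}^{p,n}(Q_0)\subseteq\bar{\mathcal{L}}^{1,n}(Q_0)$: since $p>1$, Hölder's (Jensen's) inequality gives, for every cube $Q\subset Q_0$,
$$\frac{1}{|Q|}\int_{Q}\big|f(x)-|f|_{Q}\big|\,dx\leq\Big(\frac{1}{|Q|}\int_{Q}\big|f(x)-|f|_{Q}\big|^{p}\,dx\Big)^{1/p},$$
whence $\|f\|_{\bar{\mathcal{L}}^{1,n}(Q_0)}\leq\|f\|_{\bar{\mathcal{L}}^{p,n}(Q_0)}$. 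This step needs no further input.

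The substantive direction is $\bar{\mathcal{L}}^{1,n}(Q_0)\subseteq\bar{\mathcal{L}}^{p,n}(Q_0)$, which I would obtain from the John--Nirenberg inequality. Theorem \ref{JN} is stated for $0<p\leq1$, so I invoke it at $p=1$; this yields \eqref{eqJN} with the exponential decay governed by $\|f\|_{\bar{\mathcal{L}}^{1,n}(Q_0)}$. Fixing a cube $Q\subset Q_0$ and applying the layer-cake formula to $g=\big|f-|f|_{Q}\big|$,
$$\int_{Q}\big|f(x)-|f|_{Q}\big|^{p}\,dx=\int_{0}^{\infty}pt^{p-1}\,\Big|\{x\in Q:\big|f(x)-|f|_{Q}\big|>t\}\Big|\,dt,$$
I would insert the bound $c_{1}\exp\big(-c_{2}t/\|f\|_{\bar{\mathcal{L}}^{1,n}(Q_0)}\big)|Q|$ for the distribution function, factor out $c_{1}|Q|$, and evaluate the Gamma integral $\int_{0}^{\infty}pt^{p-1}e^{-at}\,dt=\Gamma(p+1)a^{-p}$ with $a=c_{2}/\|f\|_{\bar{\mathcal{L}}^{1,n}(Q_0)}$. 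This produces
$$\int_{Q}\big|f(x)-|f|_{Q}\big|^{p}\,dx\leq c_{1}\Gamma(p+1)c_{2}^{-p}\,\|f\|_{\bar{\mathcal{L}}^{1,n}(Q_0)}^{p}\,|Q|.$$

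To finish, I would divide by $\rho^{n}$, use $|Q(x_0,\rho)\cap Q_0|\leq(2\rho)^{n}$, and take the supremum over all centres $x_0$ and radii $\rho$, obtaining $\|f\|_{\bar{\mathcal{L}}^{p,n}(Q_0)}\leq C_{p}\|f\|_{\bar{\mathcal{L}}^{1,n}(Q_0)}$ with $C_p$ depending only on $n$ and $p$; combined with the first paragraph this gives the claimed norm equivalence. The only genuine ingredient is the exponential John--Nirenberg decay, so there is no real obstacle beyond it; the one point requiring care is to apply Theorem \ref{JN} precisely at $p=1$ (equivalently, to use Theorem \ref{p<1} to rewrite any $\bar{\mathcal{L}}^{p,n}$-norm with $p<1$ as the $\bar{\mathcal{L}}^{1,n}$-norm), so that the constant in the exponent is controlled by $\|f\|_{\bar{\mathcal{L}}^{1,n}(Q_0)}$, which is exactly the quantity the target estimate must see.
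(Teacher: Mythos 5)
Your proposal is correct and is essentially the proof the paper intends: the paper omits the argument for Theorem \ref{p>1} with the remark that it is ``similar'' to Theorem \ref{p<1}, namely H\"{o}lder's inequality for the easy inclusion $\mathcal{\bar{L}}^{p,n}(Q_{0})\subseteq\mathcal{\bar{L}}^{1,n}(Q_{0})$ and the John--Nirenberg inequality of Theorem \ref{JN} (applied at $p=1$, where it is valid) combined with the layer-cake formula and the Gamma integral for the substantive inclusion. Your two steps reproduce exactly this, with the constants handled correctly, so there is nothing to add.
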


In addition, \eqref{eqJN} is in fact equivalent to $f$ being a $\mathcal{\bar{L}}^{1,n}(Q_{0})$ function.
\begin{theorem}\label{Cha0}
The following facts are equivalent:
\begin{itemize}
  \item [(i)] $f\in \mathcal{\bar{L}}^{1,n}(Q_{0})$;
  \item [(ii)] there are $c_{1},c_{2}$ such that for all $Q\subset Q_{0}$ and $t>0$,
  $$\Big|\{x\in Q: \big|f(x)-|f|_{Q}\big|>t\}\Big|\leq c_{1}e^{-c_{2}t}|Q|;$$
  \item [(iii)] there are $c_{3},c_{4}$ such that for all $Q\subset Q_{0}$
  $$\frac{1}{|Q|}\int_{Q}e^{c_{4}\big|f(x)-|f|_{Q}\big|}-1dx\leq c_{3}.$$
\end{itemize}
\end{theorem}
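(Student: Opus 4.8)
The plan is to establish the three-way equivalence through the cyclic chain (i) $\Rightarrow$ (ii) $\Rightarrow$ (iii) $\Rightarrow$ (i). The first implication carries all the analytic weight, but it is already in hand: taking $p=1$ in Theorem \ref{JN} yields exactly the distributional bound
$$\big|\{x\in Q:\,\big|f(x)-|f|_{Q}\big|>t\}\big|\leq c_{1}e^{-c_{2}t}|Q|$$
for every cube $Q\subset Q_{0}$ and every $t>0$, with $c_{1},c_{2}$ depending only on $n$ after normalizing by $\|f\|_{\mathcal{\bar{L}}^{1,n}(Q_{0})}$. So for (i) $\Rightarrow$ (ii) I would simply invoke Theorem \ref{JN} and record the resulting constants.

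For (ii) $\Rightarrow$ (iii) I would pass from the distribution function to the exponential moment by the layer-cake formula. Writing $g=f-|f|_{Q}$ and using $e^{c_{4}|g|}-1=\int_{0}^{|g|}c_{4}e^{c_{4}s}\,ds$ together with Fubini's theorem gives
$$\int_{Q}\big(e^{c_{4}|g|}-1\big)\,dx=\int_{0}^{\infty}c_{4}e^{c_{4}s}\,\big|\{x\in Q:|g(x)|>s\}\big|\,ds.$$
Inserting the bound from (ii) and choosing the free parameter $c_{4}$ strictly below $c_{2}$ makes the $s$-integral $\int_{0}^{\infty}c_{4}e^{(c_{4}-c_{2})s}\,ds=c_{4}/(c_{2}-c_{4})$ converge, so that $\frac{1}{|Q|}\int_{Q}(e^{c_{4}|f-|f|_{Q}|}-1)\,dx\leq c_{1}c_{4}/(c_{2}-c_{4})=:c_{3}$ uniformly in $Q$. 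This is a routine convergent-integral computation; the only point demanding care is the admissibility constraint $0<c_{4}<c_{2}$, which is precisely what guarantees integrability near infinity.

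The closing implication (iii) $\Rightarrow$ (i) is elementary. From the pointwise inequality $c_{4}\tau\leq e^{c_{4}\tau}-1$, valid for all $\tau\geq 0$, applied with $\tau=\big|f(x)-|f|_{Q}\big|$, one obtains $\frac{c_{4}}{|Q|}\int_{Q}\big|f-|f|_{Q}\big|\,dx\leq \frac{1}{|Q|}\int_{Q}(e^{c_{4}|f-|f|_{Q}|}-1)\,dx\leq c_{3}$, hence $\frac{1}{|Q|}\int_{Q}\big|f-|f|_{Q}\big|\,dx\leq c_{3}/c_{4}$; taking the supremum over $Q\subset Q_{0}$ gives $\|f\|_{\mathcal{\bar{L}}^{1,n}(Q_{0})}\leq c_{3}/c_{4}<\infty$. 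As for where the difficulty lies, all the substance sits in (i) $\Rightarrow$ (ii), and that has been discharged by Theorem \ref{JN}; the remaining implications are standard measure-theoretic manipulations. The one genuine subtlety shared by all three conditions is that the averaging here is the nonnegative mean $|f|_{Q}$ rather than the signed average $f_{Q}$, so I would verify that neither the layer-cake step nor the $\tau\leq e^{c_{4}\tau}-1$ step exploits cancellation of a signed mean — they do not, since every estimate involves only the nonnegative quantity $\big|f(x)-|f|_{Q}\big|$ — and therefore the classical John–Nirenberg equivalence transfers verbatim to the present variant.
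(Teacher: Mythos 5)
Your proposal is correct and follows essentially the same route as the paper: (i)$\Rightarrow$(ii) by invoking Theorem \ref{JN} (whose range $0<p\leq 1$ includes $p=1$), (ii)$\Rightarrow$(iii) by the layer-cake formula with a choice of $c_{4}$ strictly below $c_{2}$ (the paper takes $c_{4}=c_{2}/2$), and (iii)$\Rightarrow$(i) from the pointwise inequality $c_{4}\tau\leq e^{c_{4}\tau}-1$. The only cosmetic difference is that you track the constant $c_{3}=c_{1}c_{4}/(c_{2}-c_{4})$ explicitly, whereas the paper fixes $c_{4}=c_{2}/2$ and obtains $c_{3}=c_{1}$.
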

\begin{proof}
John-Nirenberg lemma yields the fact that $(i)\Rightarrow (ii)$.

$(ii)\Rightarrow (iii)$. Set $c_{4}:=\frac{c_{2}}{2}$ and $s:= e^{c_{4}t}$. We get
\begin{eqnarray*}
\begin{aligned}
\frac{1}{|Q|}\int_{Q}e^{c_{4}\big|f(x)-|f|_{Q}\big|}-1dx
&=\int_{1}^{\infty}\Big|\{x\in Q: e^{c_{4}\big|f(x)-|f|_{Q}\big|}>s\}\Big|ds\\
&=\int_{0}^{\infty}c_{4}e^{c_{4}t}\Big|\{x\in Q:\big|f(x)-|f|_{Q}\big|>t\}\Big|dt\\
&\leq c_{1}|Q|\int_{0}^{\infty}c_{4}e^{c_{4}t}e^{-c_{2}t}dt\\
&=c_{1}|Q|\int_{0}^{\infty}\frac{c_{4}}{2}e^{-\frac{c_{2}}{2}t}dt=c_{1}|Q|.
\end{aligned}
\end{eqnarray*}

$(iii)\Rightarrow (i)$. As $t\leq e^{t}-1$, hence $\big|f-|f|_{Q}\big|\leq \frac{1}{c_{4}}e^{c_{4}|f-|f|_{Q}|}-1$, then
$$\frac{1}{|Q|}\int_{Q}|f(x)-|f|_{Q}|dx\leq \frac{1}{c_{4}}\frac{1}{|Q|}\int_{Q}\Big(e^{c_{4}|f-|f|_{Q}|}-1\Big)dx\leq \frac{c_{3}}{c_{4}}.$$

Thus we complete the proof of Theorem \ref{Cha0}.
\end{proof}

\subsection{Some equivalent definitions of $\mathcal{\bar{L}}^{1,n}(Q_{0})$ function.}

Next, we give some equivalent definitions of $\mathcal{\bar{L}}^{1,n}(Q_{0})$ function.
\begin{theorem}\label{def}
Let $Q_{0}$ be a cube in $\mathbb{R}^n$. Then the following statements are equivalent:
\begin{itemize}
  \item [(i)] $f\in \mathcal{\bar{L}}^{1,n}(Q_{0})$,
  \item [(ii)] $f\in BMO(Q_{0})$ with $f^{-}\in L^{\infty}(Q_{0})$;
  \item [(iii)] For any $Q\subset Q_{0}$, there is a constant $C$ such that
  $$\frac{1}{|Q|}\int_{Q}\big||f(x)|-f_{Q}\big|dx\leq C.$$
\end{itemize}
\end{theorem}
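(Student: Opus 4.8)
The plan is to establish the three-way equivalence by proving the cyclic chain of implications $(i)\Rightarrow(ii)\Rightarrow(iii)\Rightarrow(i)$, exploiting the freedom to choose a convenient route since the obstacles are asymmetric. The central identity I would rely on throughout is the elementary pointwise comparison between the quantity $\big|f(x)-|f|_{Q}\big|$ appearing in $\bar{\mathcal{L}}^{1,n}$ and the genuine mean oscillation $|f(x)-f_{Q}|$; the discrepancy between them is controlled by the difference $\big||f|_{Q}-f_{Q}\big|$, which measures how far $f$ is from being nonnegative on $Q$, and this is precisely where the boundedness of $f^{-}$ enters.

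First I would treat $(ii)\Rightarrow(i)$, which I expect to be the cleanest direction. Assume $f\in BMO(Q_{0})$ with $f^{-}\in L^{\infty}$. Writing $f=|f|-2f^{-}$ (equivalently $|f|=f+2f^{-}$), I would estimate
\begin{equation*}
\int_{Q}\big|f(x)-|f|_{Q}\big|dx\leq \int_{Q}|f(x)-f_{Q}|dx+|Q|\,\big|f_{Q}-|f|_{Q}\big|,
\end{equation*}
and then bound the last term using $\big|f_{Q}-|f|_{Q}\big|=\big|(f-|f|)_{Q}\big|\leq (2f^{-})_{Q}\leq 2\|f^{-}\|_{L^{\infty}}$. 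Dividing by $|Q|$ shows the $\bar{\mathcal{L}}^{1,n}$ seminorm is controlled by $\|f\|_{BMO}+2\|f^{-}\|_{L^{\infty}}$, giving $(i)$.

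Next I would handle $(i)\Rightarrow(ii)$, the reverse direction, which splits into two tasks. The BMO bound follows by inserting $f_{Q}$ as an intermediate constant and using the triangle inequality together with the same estimate as above, but the genuinely delicate part is showing $f^{-}\in L^{\infty}$. Here the plan is to argue that the pointwise control forced by membership in $\bar{\mathcal{L}}^{1,n}$ prevents $f$ from taking large negative values: on any small cube $Q$ shrinking to a Lebesgue point $x$ where $f(x)$ is very negative, the average $|f|_{Q}$ stays nonnegative while $f$ stays very negative, so $\big|f(x)-|f|_{Q}\big|\geq |f(x)|$ would be large on a set of positive measure, forcing the $\bar{\mathcal{L}}^{1,n}$ norm to blow up unless $f^{-}$ is essentially bounded by a constant multiple of $\|f\|_{\bar{\mathcal{L}}^{1,n}}$. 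I anticipate this is the main obstacle: making the Lebesgue-point argument rigorous requires care, and the cleanest route is likely to first prove $(i)\Rightarrow(iii)$ and extract the $L^{\infty}$ bound on $f^{-}$ from the symmetric-looking quantity $\big||f|-f_{Q}\big|$.

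Finally, for the equivalence with $(iii)$ I would observe that the quantity $\big||f(x)|-f_{Q}\big|$ in $(iii)$ interpolates between the other two: since $\big||f(x)|-f_{Q}\big|\leq |f(x)-f_{Q}|$ pointwise (as $|\,|a|-b\,|\le|a-b|$ fails in general, I would instead use $\big||f(x)|-|f|_{Q}\big|\le\big|f(x)-|f|_{Q}\big|$ plus the bound $\big||f|_{Q}-f_{Q}\big|$), one direction is immediate, and the reverse packages the $f^{-}\in L^{\infty}$ information. The key technical inputs are the John-Nirenberg machinery of Theorem \ref{JN} and the decomposition $|f|-f=2f^{-}$; once the negative part is shown bounded, all three conditions collapse onto the ordinary BMO estimate, and I would close the cycle by noting $(iii)$ returns to $(ii)$ by the same splitting. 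I therefore expect the proof to hinge almost entirely on quantifying $\big||f|_{Q}-f_{Q}\big|\approx\|f^{-}\|_{L^{\infty}}$.
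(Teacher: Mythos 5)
Your proposal is correct, and at bottom it runs on exactly the same elementary mechanism as the paper's proof: the identity $|f|_{Q}-f_{Q}=\frac{1}{|Q|}\int_{Q}(|f|-f)\,dx=2(f^{-})_{Q}$, triangle inequalities shuttling between the two constants $f_{Q}$ and $|f|_{Q}$, and the Lebesgue differentiation theorem to upgrade the uniform bound on $(f^{-})_{Q}$ to $f^{-}\in L^{\infty}(Q_{0})$. What differs is the organization rather than the ideas. The paper proves the single cycle $(i)\Rightarrow(ii)\Rightarrow(iii)\Rightarrow(i)$, while you prove the four arrows $(ii)\Rightarrow(i)$, $(i)\Rightarrow(ii)$, $(i)\Rightarrow(iii)$, $(iii)\Rightarrow(ii)$; this costs an extra implication but each arrow stays a one-line estimate. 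For the BMO half of $(i)\Rightarrow(ii)$ the paper uses the mean-zero trick (split $Q$ into $E_{Q}=\{f\geq f_{Q}\}$ and $F_{Q}=Q\setminus E_{Q}$, use $\int_{Q}(f-f_{Q})\,dx=0$ together with $f_{Q}\leq|f|_{Q}$), whereas you insert $|f|_{Q}$ as a comparison constant and use the generic bound $\bigl||f|_{Q}-f_{Q}\bigr|\leq\frac{1}{|Q|}\int_{Q}\bigl|f-|f|_{Q}\bigr|\,dx$; both routes yield the constant $2$, and yours is the more standard "best constant up to a factor $2$" argument. For the boundedness of $f^{-}$, your pointwise Lebesgue-point blow-up does work, but the paper's averaged form of the same idea, namely $2(f^{-})_{Q}=|f|_{Q}-f_{Q}\leq\frac{1}{|Q|}\int_{Q}\bigl|f-|f|_{Q}\bigr|\,dx\leq\|f\|_{\mathcal{\bar{L}}^{1,n}(Q_{0})}$ followed by differentiation of $f^{-}$, is a two-line rigorous version that spares the care you anticipate needing. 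Two small corrections to your sketch: first, in $(i)\Rightarrow(ii)$ you cannot literally reuse "the same estimate as above", i.e. $\bigl||f|_{Q}-f_{Q}\bigr|\leq2\|f^{-}\|_{L^{\infty}}$, before $f^{-}\in L^{\infty}$ has been established; either prove the $f^{-}$ bound first, or replace it by $\bigl||f|_{Q}-f_{Q}\bigr|\leq\|f\|_{\mathcal{\bar{L}}^{1,n}(Q_{0})}$, which follows from $(i)$ alone. Second, the John--Nirenberg machinery of Theorem \ref{JN} is not a key input here: the statement concerns only the exponent $p=1$ and the whole equivalence is purely elementary; John--Nirenberg is needed only for the $p\neq1$ analogues (Theorems \ref{p<1} and \ref{p>1}).
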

\begin{proof}
$(i)\Rightarrow (ii).$ We write $E_{Q}=\{x\in Q: f(x)\geq f_{Q}\}$ and $F_{Q}=Q\backslash E_{Q}$. From the fact
$$\int_{Q}\big(f(x)-f_{Q}\big)dx=0,$$
it follows that
\begin{eqnarray*}
\begin{aligned}
\frac{1}{|Q|}\int_{Q}\big|f(x)-f_{Q}\big|dx&= \frac{2}{|Q|}\int_{F_{Q}}\big(f_{Q}-f(x)\big)dx\\
&\leq  \frac{2}{|Q|}\int_{F_{Q}}\big(|f|_{Q}-f(x)\big)dx \\
&\leq  \frac{2}{|Q|}\int_{Q}\big||f|_{Q}-f(x)\big|dx\leq 2\|f\|_{\mathcal{\bar{L}}^{1,n}(Q_{0})}.
\end{aligned}
\end{eqnarray*}
Meanwhile,
\begin{eqnarray*}
\begin{aligned}
\frac{2}{|Q|}\int_{Q}f^-(x)dx&= |f|_{Q}-f_{Q}\leq \frac{1}{|Q|}\int_{Q}\big|f(x)-|f|_{Q}\big|dx\leq \|f\|_{\mathcal{\bar{L}}^{1,n}(Q_{0})}.
\end{aligned}
\end{eqnarray*}
which implies that $f^-\in L^{\infty}(Q_{0})$ by Lebesgue different theorem.

$(ii)\Rightarrow (iii).$ Let $f\in BMO(Q_{0})$ with $f^{-}\in L^{\infty}(Q_{0})$. For any $Q\subset Q_{0}$,
\begin{eqnarray*}
\begin{aligned}
\frac{1}{|Q|}\int_{Q}\big||f(x)|-f_{Q}\big|dx&\leq \frac{1}{|Q|}\int_{Q}|f(x)-f_{Q}|dx+(f^-)_{Q}\\
&\leq |f|_{*}+\|f^{-}\|_{L^{\infty}(Q_{0})}.
\end{aligned}
\end{eqnarray*}

$(iii)\Rightarrow (i).$ Using the fact
$$\frac{2}{|Q|}\int_{Q}f^-(x)dx= |f|_{Q}-f_{Q},$$
we arrive at
\begin{eqnarray*}
\begin{aligned}
&\frac{1}{|Q|}\int_{Q}\big|f(x)-|f|_{Q}\big|dx\\
&\leq \frac{1}{|Q|}\int_{Q}\big|f(x)-|f(x)|\big|dx+\big(|f|_{Q}-f_{Q}\big)+\frac{1}{|Q|}\int_{Q}\big|f_{Q}-|f(x)|\big|dx\\
&\leq \frac{3}{|Q|}\int_{Q}\big||f(x)|-f_{Q}\big|dx.
\end{aligned}
\end{eqnarray*}
Therefore, we complete the proof of Theorem \ref{def}.
\end{proof}

On the other hand, it is well known that $f\in BMO(Q_{0})$ if and only if
\begin{equation}\label{eqc1}
\sup_{Q\subset Q_{0}}\inf_{c\in \mathbb{R}^n}\rho^{-n}\int_{Q}|u(x)-c|dx<\infty.
\end{equation}
A similar conclusion can be obtained as follows.
\begin{theorem}\label{Pc1}
Let $Q_{0}$ be an $n$-dimensional cube in $\mathbb{R}^n$. Then $f\in \mathcal{\bar{L}}^{1,n}(Q_{0})$ if and only if
$$\sup_{Q\subset Q_{0}}\inf_{c\geq 0}\rho^{-n}\int_{Q}|f(x)-c|dx<\infty.$$
\end{theorem}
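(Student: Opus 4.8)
The plan is to prove both inclusions directly, the essential point being an elementary pointwise inequality that converts the nonnegativity constraint $c\geq 0$ into control of the average $|f|_{Q}$. Throughout I will write the averages with $|Q|^{-1}$ in place of $\rho^{-n}$; since $Q=Q(x_{0},\rho)$ has side $2\rho$ we have $|Q|=2^{n}\rho^{n}$, so the two normalizations differ only by the dimensional constant $2^{n}$ and one supremum is finite exactly when the other is. I denote by $N(f)$ the supremum appearing in the statement.

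The forward implication is immediate. Since $|f|\geq 0$, its average $|f|_{Q}$ is a nonnegative admissible choice of $c$, so taking $c=|f|_{Q}$ gives
$$\inf_{c\geq 0}\frac{1}{|Q|}\int_{Q}|f(x)-c|\,dx\leq \frac{1}{|Q|}\int_{Q}\big|f(x)-|f|_{Q}\big|\,dx\leq \|f\|_{\mathcal{\bar{L}}^{1,n}(Q_{0})}$$
for every $Q\subset Q_{0}$, whence $N(f)\leq \|f\|_{\mathcal{\bar{L}}^{1,n}(Q_{0})}<\infty$.

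For the reverse implication, fix $Q\subset Q_{0}$ and $\varepsilon>0$ and choose $c_{Q}\geq 0$ with $\tfrac{1}{|Q|}\int_{Q}|f-c_{Q}|\leq N(f)+\varepsilon$. The key observation is the elementary inequality $\big||a|-c\big|\leq |a-c|$, valid for all real $a$ and all $c\geq 0$ (for $a\geq 0$ both sides coincide, while for $a<0$ it reduces to $\big|c-|a|\big|\leq c+|a|$). Applying it pointwise with $a=f(x)$ and $c=c_{Q}$ and integrating gives $\tfrac{1}{|Q|}\int_{Q}\big||f|-c_{Q}\big|\leq \tfrac{1}{|Q|}\int_{Q}|f-c_{Q}|$, so that by Jensen's inequality $\big||f|_{Q}-c_{Q}\big|\leq \tfrac{1}{|Q|}\int_{Q}\big||f|-c_{Q}\big|\leq N(f)+\varepsilon$. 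Now the triangle inequality $\big|f-|f|_{Q}\big|\leq |f-c_{Q}|+\big|c_{Q}-|f|_{Q}\big|$, together with integration over $Q$, yields $\tfrac{1}{|Q|}\int_{Q}\big|f-|f|_{Q}\big|\leq 2\big(N(f)+\varepsilon\big)$; letting $\varepsilon\to 0$ and taking the supremum over $Q\subset Q_{0}$ gives $\|f\|_{\mathcal{\bar{L}}^{1,n}(Q_{0})}\leq 2N(f)<\infty$.

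The only genuine content is the pointwise inequality $\big||a|-c\big|\leq|a-c|$, which is where the hypothesis $c\geq 0$ enters in an essential way; without it one would recover only the classical $BMO$ characterization \eqref{eqc1}, and it is precisely the nonnegativity that pins the near-optimal constant $c_{Q}$ close to $|f|_{Q}$ rather than to $f_{Q}$. I therefore expect no serious obstacle beyond verifying this inequality, the remainder being routine triangle-inequality bookkeeping; in particular, this direct route avoids having to invoke the $BMO$-plus-$f^{-}\in L^{\infty}$ decomposition of Theorem \ref{def}.
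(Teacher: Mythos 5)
Your proof is correct, and at the level of overall architecture it is the same as the paper's: the forward direction by taking $c=|f|_{Q}$ as a competitor, and the reverse direction by writing $\big|f-|f|_{Q}\big|\leq |f-c_{Q}|+\big|c_{Q}-|f|_{Q}\big|$ and then bounding $\big|c_{Q}-|f|_{Q}\big|$ by the average of $|f-c_{Q}|$ over $Q$. Where you genuinely differ is in how that last bound is obtained, and your version is the correct one. The paper justifies it by asserting, ``by $c_{Q}\geq 0$'', the chain $\big||f|_{Q}-c_{Q}\big|\leq |f_{Q}-c_{Q}|\leq \frac{1}{|Q|}\int_{Q}|f-c_{Q}|\,dx$; the first inequality in this chain is false in general. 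Indeed, take $f=1$ on one half of $Q$ and $f=-1$ on the other, so that $f_{Q}=0$ and $|f|_{Q}=1$; then every $c\in[0,1]$ minimizes $\int_{Q}|f-c|\,dx$ over $c\geq 0$, and for the admissible minimizer $c_{Q}=0$ the paper's inequality reads $1\leq 0$. Your route --- the pointwise inequality $\big||a|-c\big|\leq |a-c|$ for $c\geq 0$, integrated over $Q$ and followed by Jensen --- gives $\big||f|_{Q}-c_{Q}\big|\leq \frac{1}{|Q|}\int_{Q}\big||f|-c_{Q}\big|\,dx\leq \frac{1}{|Q|}\int_{Q}|f-c_{Q}|\,dx$, which is exactly the estimate the paper needs, with the same final constant $2$. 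So your argument not only proves the theorem but repairs the defective step in the paper's own proof. Two further minor points in your favor: working with an $\varepsilon$-near-minimizer avoids having to know that the infimum over $c\geq 0$ is attained (the paper tacitly assumes a minimizing $c_{Q}$ exists), and you correctly note that the $\rho^{-n}$ versus $|Q|^{-1}$ normalizations differ only by the harmless factor $2^{n}$.
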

\begin{proof}
$(\Rightarrow)$ is obviously.

$(\Leftarrow)$. For each cube $Q$ let $c_{Q}\geq 0$ be the value which minimizes $\displaystyle{\int_{Q}|f(x)-c|dx}$ with $c\geq 0$, then
$$\frac{1}{|Q|}\int_{Q}|f(x)-|f|_{Q}|dx\leq \frac{1}{|Q|}\int_{Q}|f(x)-c_{Q}|dx+\frac{1}{|Q|}\int_{Q}|c_{Q}-|f|_{Q}|dx.$$
By $c_{Q}\geq 0$, we have
$$
||f|_{Q}-c_{Q}|\leq|f_{Q}-c_{Q}|\leq\frac{1}{|Q|}\int_{Q}|f(x)-c_{Q}|dx.
$$
Therefore
$$\frac{1}{|Q|}\int_{Q}|f(x)-|f|_{Q}|dx\leq C$$
and $f\in \mathcal{\bar{L}}^{1,n}(Q_{0})$.
\end{proof}

In fact, we can obtain replace \eqref{eqc1} with the following \eqref{eqcp}, for $0<p<\infty$
\begin{equation}\label{eqcp}
\|f\|^p_{\mathcal{\bar{L}}_{*}^{p,n}(Q_{0})}=\sup_{Q\subset Q_{0}}\inf_{c\geq 0}\rho^{-n}\int_{Q}|f(x)-c|^pdx<\infty,
\end{equation}
Now, we establish a version of John-Nirenberg inequality suitable for $\mathcal{\bar{L}}_{*}^{p,n}(Q_{0})$ with $0<p<1$.

\begin{theorem}\label{JNcp}
Let $0<p<1$ and $\|f\|_{\mathcal{\bar{L}}_{*}^{p,n}(Q_{0})}=1$ and for each cube $Q$ let $c_{Q}$ be the positive constant which minimizes $\displaystyle{\int_{Q}|f(x)-c|^{p}dx}$. Then
$$\Big|\big\{x\in Q: |f(x)-c_{Q}|>t\big\}\Big|\leq c_{1}e^{-c_{2}t}|Q|,$$
where $c_{1}$ and $c_{2}$ are positive constants.
\end{theorem}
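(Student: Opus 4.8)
The plan is to mimic the Calder\`on--Zygmund iteration used for Theorem~\ref{JN}, replacing the average $|f|_{Q}$ by the minimizing constant $c_{Q}$ throughout. First I would record two preliminary facts. Since every cube $Q\subset Q_{0}$ satisfies $\|f\|_{\mathcal{\bar{L}}_{*}^{p,n}(Q)}\le\|f\|_{\mathcal{\bar{L}}_{*}^{p,n}(Q_{0})}=1$, it suffices to prove the inequality with $Q$ itself playing the role of the top cube, so I fix $Q$ and rename it $Q_{0}$. Also, for $0<p<1$ the map $c\mapsto\int_{Q}|f-c|^{p}\,dx$ is finite (one checks $f\in L^{p}(Q_{0})$ from the definition via $|f|^{p}\le|f-c|^{p}+|c|^{p}$), continuous, and coercive on $[0,\infty)$, so a minimizer $c_{Q}\ge0$ exists; I simply fix one, with no uniqueness needed. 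After normalization the minimality gives $\frac{1}{|Q'|}\int_{Q'}|f-c_{Q'}|^{p}\,dx=\inf_{c\ge0}\frac{1}{|Q'|}\int_{Q'}|f-c|^{p}\,dx\le1$ for every subcube $Q'$.

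The key estimate is the control of the shift of centers between a cube and a subcube, replacing \eqref{eqJN3}. For $Q'\subset Q''$ the subadditivity $|a-b|^{p}\le|a|^{p}+|b|^{p}$, valid for $0<p\le1$, applied pointwise to $c_{Q'}-c_{Q''}=(f(x)-c_{Q''})-(f(x)-c_{Q'})$ and then averaged over $Q'$, yields
\[
|c_{Q'}-c_{Q''}|^{p}\le\frac{1}{|Q'|}\int_{Q'}|f-c_{Q'}|^{p}\,dx+\frac{1}{|Q'|}\int_{Q'}|f-c_{Q''}|^{p}\,dx.
\]
When $Q'$ is a Calder\`on--Zygmund cube obtained by decomposing $Q''$ relative to $|f-c_{Q''}|^{p}$ at a level $\alpha>1$, the second term is $\le2^{n}\alpha$, and by the minimality above the first term is $\le1$; hence $|c_{Q'}-c_{Q''}|^{p}\le2^{n+1}\alpha$, i.e.\ $|c_{Q'}-c_{Q''}|\le\beta:=(2^{n+1}\alpha)^{1/p}$. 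This is the only place where the definition \eqref{eqcp} of $\mathcal{\bar{L}}_{*}^{p,n}$ enters.

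With this in hand I would run the iteration. Starting from $Q_{0}$ and $g=|f-c_{Q_{0}}|^{p}$, the decomposition at level $\alpha$ produces cubes $\{Q^{1}_{k}\}$ with $|f-c_{Q_{0}}|^{p}\le\alpha$ off $\bigcup_{k}Q^{1}_{k}$ and $\sum_{k}|Q^{1}_{k}|\le\alpha^{-1}|Q_{0}|$; decomposing each $Q^{i-1}_{k}$ relative to $|f-c_{Q^{i-1}_{k}}|^{p}$ and iterating gives, in generation $i$, a family $\{Q^{i}_{k}\}$ with $\sum_{k}|Q^{i}_{k}|\le\alpha^{-i}|Q_{0}|$. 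Following the ancestor chain $Q_{0}\supset Q^{1}_{k_{1}}\supset\cdots\supset Q^{i-1}_{k_{i-1}}\ni x$ of a point $x\notin\bigcup_{k}Q^{i}_{k}$ and summing the shift estimate by the ordinary triangle inequality gives
\[
|f(x)-c_{Q_{0}}|\le|f(x)-c_{Q^{i-1}_{k_{i-1}}}|+\sum_{j=1}^{i-1}|c_{Q^{j}_{k_{j}}}-c_{Q^{j-1}_{k_{j-1}}}|\le\alpha^{1/p}+(i-1)\beta\le i\beta.
\]
Hence $\{|f-c_{Q_{0}}|>t\}\subset\bigcup_{k}Q^{i}_{k}$ up to a null set once $i\beta<t$, and choosing $i$ with $i\beta<t\le(i+1)\beta$ gives $|\{|f-c_{Q_{0}}|>t\}|\le\alpha^{-i}|Q_{0}|\le\alpha\,e^{-(\log\alpha/\beta)t}|Q_{0}|$ for $t>\beta$, with $0<t\le\beta$ handled by enlarging $c_{1}$. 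This yields the claim with $c_{1}=\alpha$ and $c_{2}=\log\alpha/(2^{n+1}\alpha)^{1/p}$.

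I expect the main obstacle to be tracking the constant correctly across generations: one must accumulate the shifts $|c_{Q^{j}_{k_{j}}}-c_{Q^{j-1}_{k_{j-1}}}|$ linearly at the level of the \emph{values} (after taking $p$-th roots), not at the level of the $p$-th powers. Summing $p$-th powers would only give $|f-c_{Q_{0}}|^{p}\lesssim i$, hence the stretched-exponential bound $e^{-ct^{p}}$ rather than the asserted $e^{-c_{2}t}$. Thus the subadditivity inequality is what bounds each individual shift, while the genuine triangle inequality is what must be used over the chain of ancestors.
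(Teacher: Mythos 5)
Your proof is correct and uses the same core machinery as the paper's: a Calder\'on--Zygmund decomposition of $|f-c_{Q}|^{p}$ at a fixed level, the center-shift bound $|c_{Q'}-c_{Q''}|^{p}\le 2^{n+1}\alpha$ obtained from $p$-subadditivity together with minimality of the constants (the paper bounds the first term by $2^{n}s^{p}$ via $\int_{Q_j}|f-c_{Q_j}|^{p}\le\int_{Q_j}|f-c_{Q}|^{p}$, you bound it by $1$ via the norm normalization---same idea), and an iteration trading geometric measure decay against linearly accumulated shifts at the level of values rather than $p$-th powers, which is exactly what produces $e^{-c_{2}t}$ instead of $e^{-c_{2}t^{p}}$. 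The only difference is bookkeeping: the paper runs the iteration as a recursion $F_{k}(t)=F_{k-1}\bigl(t-2^{(n+1)/p}s\bigr)/s^{p}$ on distribution-function bounds seeded by the Chebyshev estimate $F_{1}(t)=t^{-p}$, whereas you nest explicit generations of cubes inside a fixed cube as in the paper's own proof of Theorem \ref{JN}; both unwind to the same exponential bound with comparable constants.
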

\begin{proof}
Take any cube $Q$, write $E_{Q}=\{x\in Q: |f(x)-c_{Q}|>t\}$. Then
\begin{eqnarray*}
|E_{Q}|&\leq& \int_{E_{Q}}\frac{|f(x)-c_{Q}|^{p}}{t^{p}}dx\\
&\leq&\frac{1}{t^{p}}\int_{Q}|f(x)-c_{Q}|^{p}dx\\
&\leq&\frac{1}{t^{p}}|Q|.
\end{eqnarray*}
Write $F_{1}(t)=\frac{1}{t^{p}}$, then
$$|E_{Q}|\leq F_{1}(t)|Q|.$$

Let $s>1$ and $t\in (0,\infty)$ such that $2^{\frac{n+1}{p}}s\leq t$. Fix a cube $Q$, there is a Calderon-Zygmund decomposition to $|f(x)-c_{Q}|^{p}$ of disjoint cubes $\{Q_{j}\}$ such that $Q_{j}\subset Q$ and\\

~~$(i)~~ \displaystyle{s^{p}<\frac{1}{|Q_{j}|}\int_{Q_{j}}|f(x)-c_{Q}|^{p}dx\leq 2^{n}s^{p}},$\\

~~$(ii)~~\displaystyle{|f(x)-c_{Q}|\leq s}$ for $x\in Q\backslash\bigcup_{j}Q_{j}$.
\vspace{0.3cm}

Notice that
$$\int_{Q_{j}}|f(y)-c_{Q_{j}}|^{p}dy\leq \int_{Q_{j}}|f(y)-c_{Q}|^{p}dy.$$
Therefore, by $(i)$, we have
\begin{eqnarray*}
|c_{Q_{j}}-c_{Q}|^{p}
&=&\frac{1}{|Q_{j}|}\int_{Q_{j}}|c_{Q_{j}}-c_{Q}|^{p}dy\\
&\leq&\frac{1}{|Q_{j}|}\int_{Q_{j}}|f(y)-c_{Q_{j}}|^{p}dy+\frac{1}{|Q_{j}|}\int_{Q_{j}}|f(y)-c_{Q}|^{p}dy\\
&\leq&\frac{2}{|Q_{j}|}\int_{Q_{j}}|f(y)-c_{Q}|^{p}dy\\
&\leq&2^{n+1}s^{p}.
\end{eqnarray*}

Since $2^{\frac{n+1}{p}}s\leq t$, then $E_{Q}\subset \bigcup_{j}Q_{j}$ and
\begin{eqnarray*}
|E_{Q_{0}}|&=&\sum_{j}\big|\{x\in Q_{j}:|f(x)-c_{Q}|>t\}\big|\\
&\leq&\sum_{j}\big|\{x\in Q_{j}:|f(x)-c_{Q_{j}}|+|c_{Q_{j}}-c_{Q}|>t\}\big)\\
&\leq&\sum_{j}\big|\{x\in Q_{j}:|f(x)-c_{Q_{j}}|>t-2^{\frac{n+1}{p}}s\}\big|\\
&\leq&\sum_{j}F_{1}(t-2^{\frac{n+1}{p}}s)\cdot |Q_{j}|\\
&\leq&F_{1}(t-2^{\frac{n+1}{p}}s)
\sum_{j}\frac{1}{s^{p}}\int_{Q_{j}}|f(x)-c_{Q}|^{p}dx\\
&\leq&\frac{F_{1}(t-2^{\frac{n+1}{p}}s)}{s^{p}}\int_{Q_{0}}|f(x)-c_{Q}|^{p}dx\\
&\leq&\frac{F_{1}(t-2^{\frac{n+1}{p}}s)}{s^{p}}|Q|.
\end{eqnarray*}
Let
$$F_{2}(t)= \frac{F_{1}(t-2^{\frac{n+1}{p}}s)}{s^{p}}.$$
Continue this process indefinitely, we obtain for any $k\geq 2$,
$$F_{k}(t)= \frac{F_{k-1}(t-2^{n+1}s^{p})}{s^{p}}.$$
and
$$|E_{Q}|\leq F_{k}(t)|Q|.$$

We fix a constant $t>0$. If
$$k\cdot2^{\frac{n+1}{p}}s<t\leq (k+1)\cdot2^{\frac{n+1}{p}}s.$$
for some $k\geq 1$, thus
\begin{eqnarray*}
|E_{Q}|&\leq& \big|\{x\in Q_{0}:|f(x)-c_{Q}|>t\}\big|\\
&\leq&\big|\{x\in Q:|f(x)-c_{Q}|>k\cdot2^{\frac{n+1}{p}}s\}\big|\\
&\leq&F_{k}(k\cdot2^{\frac{n+1}{p}}s)|Q|\\
&=&\frac{F_{1}(2^{\frac{n+1}{p}}s)}{s^{(k-1)p}}|Q|\\
&=&\frac{1}{2^{n+1}s^{kp}}|Q|\\
&\leq&\frac{e^{-kp\log s}}{2^{n+1}}|Q|\\
&\leq&\frac{e}{2^{n+1}}e^{-2^{-\frac{n+1}{p}}t}|Q|.
\end{eqnarray*}
Since $-k\leq 1-\frac{t}{2^{\frac{n+1}{p}}s}$. If $t\leq 2^{\frac{n+1}{p}}s$, then use the trivial estimate
$$|E_{Q}|\leq |Q|\leq e^{-t}e^{2^{\frac{n+1}{p}}s}|Q|.$$
Recall that $s$ is any real number greater that 1. Choosing $s=e$, this yields
$$\Big|\big\{x\in Q: |f(x)-c_{Q}|>t\big\}\Big|\leq c_{1}e^{-c_{2}t}|Q|,$$
for some positive constants $c_{1}$ and $c_{2}$, which proves the inequality of the Proposition \ref{JNcp}.
\end{proof}

From  Theorems \ref{Pc1} and \ref{JNcp}, it is immediately that
\begin{theorem}\label{Pp}
Let $0<p<\infty$ and $Q_{0}$ be an $n$-dimensional cube in $\mathbb{R}^n$. Then $f\in \mathcal{\bar{L}}^{1,n}(Q_{0})$ if and only if
$$\sup_{Q\subset Q_{0}}\inf_{c\geq 0}\rho^{-n}\int_{Q}|f(x)-c|^pdx<\infty.$$
\end{theorem}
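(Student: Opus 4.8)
The plan is to route the whole equivalence through the $p=1$ characterisation already established. Writing $\|f\|_{\mathcal{\bar{L}}_{*}^{p,n}(Q_{0})}$ for the quantity appearing in \eqref{eqcp}, Theorem \ref{Pc1} tells us that $f\in\mathcal{\bar{L}}^{1,n}(Q_{0})$ is equivalent to $\|f\|_{\mathcal{\bar{L}}_{*}^{1,n}(Q_{0})}<\infty$. Hence it suffices to prove, for each fixed $0<p<\infty$, the comparison
$$\|f\|_{\mathcal{\bar{L}}_{*}^{1,n}(Q_{0})}<\infty \iff \|f\|_{\mathcal{\bar{L}}_{*}^{p,n}(Q_{0})}<\infty,$$
after which the theorem follows at once. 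Throughout I would pass freely between $\rho^{-n}\int_{Q}$ and $|Q|^{-1}\int_{Q}$, since $|Q|=(2\rho)^{n}$ makes these differ only by the dimensional factor $2^{n}$.

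For the forward implication $f\in\mathcal{\bar{L}}^{1,n}(Q_{0})\Rightarrow\|f\|_{\mathcal{\bar{L}}_{*}^{p,n}(Q_{0})}<\infty$, I would observe that $|f|_{Q}\geq 0$ is an admissible competitor in the infimum defining $\|f\|_{\mathcal{\bar{L}}_{*}^{p,n}(Q_{0})}$, so that
$$\inf_{c\geq 0}\rho^{-n}\int_{Q}|f(x)-c|^{p}\,dx\leq \rho^{-n}\int_{Q}\big|f(x)-|f|_{Q}\big|^{p}\,dx\leq \|f\|^{p}_{\mathcal{\bar{L}}^{p,n}(Q_{0})},$$
and the right-hand side is finite because $\mathcal{\bar{L}}^{p,n}(Q_{0})=\mathcal{\bar{L}}^{1,n}(Q_{0})$ by the equivalences of Theorems \ref{p<1} and \ref{p>1}. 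Taking the supremum over $Q\subset Q_{0}$ gives the claim.

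For the reverse implication I would split on the size of $p$. When $p\geq 1$, Jensen's inequality on the probability space $(Q,|Q|^{-1}dx)$ gives, for the positive $L^{p}$-minimiser $c_{Q}$,
$$\frac{1}{|Q|}\int_{Q}|f(x)-c_{Q}|\,dx\leq \Big(\frac{1}{|Q|}\int_{Q}|f(x)-c_{Q}|^{p}\,dx\Big)^{1/p}=\Big(\inf_{c\geq 0}\frac{1}{|Q|}\int_{Q}|f(x)-c|^{p}\,dx\Big)^{1/p},$$
so that $\|f\|_{\mathcal{\bar{L}}_{*}^{1,n}(Q_{0})}\leq C\|f\|_{\mathcal{\bar{L}}_{*}^{p,n}(Q_{0})}<\infty$ and Theorem \ref{Pc1} closes this case. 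The genuinely substantial range is $0<p<1$, where Jensen runs the wrong way and no elementary inequality upgrades $L^{p}$ control to $L^{1}$ control; this is precisely where Theorem \ref{JNcp} enters. Normalising $\|f\|_{\mathcal{\bar{L}}_{*}^{p,n}(Q_{0})}=1$ and taking $c_{Q}\geq 0$ to be the $L^{p}$-minimiser, the exponential bound $|\{x\in Q:|f(x)-c_{Q}|>t\}|\leq c_{1}e^{-c_{2}t}|Q|$ of Theorem \ref{JNcp} integrates, via the layer-cake formula, to
$$\frac{1}{|Q|}\int_{Q}|f(x)-c_{Q}|\,dx=\frac{1}{|Q|}\int_{0}^{\infty}\big|\{x\in Q:|f(x)-c_{Q}|>t\}\big|\,dt\leq c_{1}\int_{0}^{\infty}e^{-c_{2}t}\,dt=\frac{c_{1}}{c_{2}},$$
a bound uniform in $Q\subset Q_{0}$. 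Since $c_{Q}\geq 0$, this yields $\|f\|_{\mathcal{\bar{L}}_{*}^{1,n}(Q_{0})}\leq c_{1}/c_{2}<\infty$, and Theorem \ref{Pc1} again gives $f\in\mathcal{\bar{L}}^{1,n}(Q_{0})$.

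I expect the only real obstacle to lie in the $0<p<1$ half of the reverse direction, and that obstacle has in effect already been discharged inside Theorem \ref{JNcp}: once the John--Nirenberg-type exponential estimate is available for the oscillation centred at the positive minimiser $c_{Q}$, the passage from $L^{p}$ to $L^{1}$ is a single integration, and everything else reduces to Jensen's inequality together with the $p=1$ identification of Theorem \ref{Pc1}. This is what makes the statement \emph{immediate} from the two cited results.
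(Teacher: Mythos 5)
Your proposal is correct and takes essentially the same route as the paper, which states Theorem \ref{Pp} as an immediate consequence of Theorems \ref{Pc1} and \ref{JNcp}: your argument simply fills in the details the paper leaves implicit (the forward direction via the competitor $c=|f|_{Q}$ and the equivalence $\mathcal{\bar{L}}^{p,n}(Q_{0})=\mathcal{\bar{L}}^{1,n}(Q_{0})$ from Theorems \ref{p<1} and \ref{p>1}, Jensen's inequality for $p\geq 1$, and the layer-cake integration of the exponential bound of Theorem \ref{JNcp} for $0<p<1$). No gaps to report.
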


\begin{remark}
It is worth remarking that
\begin{itemize}
  \item [(a)] $f\in \mathcal{\bar{L}}^{1,n}(Q_{0})$ if and only if for every $Q\subset Q_{0}$ there is a constant $c_{Q}\geq 0$ such that
$$\sup_{Q}\frac{1}{|Q|}\int_{Q}|f(x)-c_{Q}|dx<\infty.$$
Indeed for $x\in Q$,
\begin{eqnarray*}
\begin{aligned}
\big|f(x)-|f|_{Q}\big|&\leq |f(x)-c_{Q}|+\big|c_{Q}-|f|_{Q}\big|\\
&\leq |f(x)-c_{Q}|+\frac{1}{|Q|}\int_{Q}|f(y)-c_{Q}|dy,
\end{aligned}
\end{eqnarray*}
and averaging over $Q$ we get
\begin{eqnarray*}
\begin{aligned}
\frac{1}{|Q|}\int_{Q}|f(x)-|f|_{Q}|dx&\leq \frac{2}{|Q|}\int_{Q}|f(x)-c_{Q}|dx.
\end{aligned}
\end{eqnarray*}

  \item [(b)]  $f\in \mathcal{\bar{L}}^{1,n}(Q_{0})$ if and only if for every $Q\subset Q_{0}$ there is a constant $c_{Q}\leq 0$ such that
$$\sup_{Q}\frac{1}{|Q|}\int_{Q}\big||f(x)|-c_{Q}\big|dx<\infty.$$
Indeed for $x\in Q$
\begin{eqnarray*}
\begin{aligned}
\big||f(x)|-f_{Q}\big|&\leq \big||f(x)|-c_{Q}\big|+\big|c_{Q}-f_{Q}\big|\\
&\leq |f(x)-c_{Q}|+\frac{1}{|Q|}\int_{Q}|f(y)-c_{Q}|dy\\
&\leq |f(x)-c_{Q}|+\frac{1}{|Q|}\int_{Q}\big||f(y)|-c_{Q}\big|dy,
\end{aligned}
\end{eqnarray*}
and averaging over $Q$ we get
\begin{eqnarray*}
\begin{aligned}
\frac{1}{|Q|}\int_{Q}\big||f(x)|-f_{Q}|dx&\leq \frac{2}{|Q|}\int_{Q}\big||f(x)|-c_{Q}\big|dx.
\end{aligned}
\end{eqnarray*}
\end{itemize}
\end{remark}

\subsection{Characterizations of $\mathcal{\bar{L}}^{1,n}(\mathbb{R}^n)$ function associated to maximal functions.}

In 2000, Bastero, Milman and Ruiz \cite{BMR} showed that $f\in BMO(\mathbb{R}^n)$ with $f^{-}\in L^\infty(\mathbb{R}^n)$ if and only if
\begin{equation}\label{eq1.1}
\sup_{Q}\bigg(\frac{1}{|Q|}\int_{Q}|f(x)-M_{Q}(f)(x)|^{p}dx\bigg)^{1/p}<\infty,
\end{equation}
where $1\leq p<\infty$ and
$$\displaystyle {M_{Q}(f)(x)=\sup_{Q\supseteq Q'\ni x}\frac{1}{|Q'|}\int_{Q'}|f(y)|dy}.$$

Later, Zhang and Wu obtained similar results for the fractional maximal function in \cite{ZW2009} and extended the above results to variable exponent Lebesgue
spaces in \cite{ZW2014} and \cite{ZW20142}. For $0<\alpha<n$, the fractional maximal function is defined by
$$M_{\alpha}(f)(x)=\sup_{x\in Q}\frac{1}{|Q|^{1-\alpha/n}}\int_{Q}|f(y)|dy,$$
It is proved that for $1<p,q<\infty$ and $1/p-1/q=\alpha/n$, the commutator $[b,M_{\alpha}]$ is bounded from $L^{p}(\mathbb{R}^n)$ to $L^{q}(\mathbb{R}^n)$ if and only if
 \begin{equation}\label{eq1.1}
\sup_{Q}\bigg(\frac{1}{|Q|}\int_{Q}|f(x)-|Q|^{-\alpha,n}M_{\alpha,Q}(f)(x)|^{q}dx\bigg)^{1/q}<\infty,
\end{equation}
where
$$\displaystyle {M_{\alpha,Q}(f)(x)=\sup_{Q\supseteq Q'\ni x}\frac{1}{|Q'|^{1-\alpha/n}}\int_{Q'}|f(y)|dy}.$$

The above results show that the \eqref{eq1.1} is equivalent in the sense of norm, when $1\leq p<\infty$. It is a natural question: can we establish the John-Nirenberg inequality suitable for the function $f\in BMO(\mathbb{R}^n)$ with $f^{-}\in L^{\infty}(\mathbb{R}^n)$? In \cite{HD}, we solved the problem as follows.
\begin{theorem}\label{JNM1} \cite{HD}
Suppose $f\in BMO(\mathbb{R}^n)$ with $f^{-}\in L^{\infty}(\mathbb{R}^n)$, then for any cube $Q$ and $t>0$, we have
$$\Big|\big\{x\in Q: |f(x)-M_{Q}(f)(x)|>t\big\}\Big|\leq a_{1}e^{-a_{2}t}|Q|.$$
where $a_{1}$ and $a_{2}$ are positive constants.
\end{theorem}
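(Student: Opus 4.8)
The plan is to control the distribution of $|f-M_Q(f)|$ by splitting it into an oscillation term, governed by the John--Nirenberg inequality already established in Theorem \ref{JN}, and a ``maximal'' term $M_Q(f)-|f|_Q$, for which I will prove an independent exponential estimate. The starting observation is that for $x\in Q$ one has $M_Q(f)(x)\ge |f|_Q$ (take $Q'=Q$ in the supremum) and, by the Lebesgue differentiation theorem, $M_Q(f)(x)\ge|f(x)|\ge f(x)$ almost everywhere; hence $|f(x)-M_Q(f)(x)|=M_Q(f)(x)-f(x)$ and
\begin{equation*}
|f(x)-M_Q(f)(x)|\le \big|f(x)-|f|_Q\big|+\big(M_Q(f)(x)-|f|_Q\big).
\end{equation*}
Consequently $\{x\in Q:|f(x)-M_Q(f)(x)|>t\}$ is contained in the union of $\{x\in Q:|f(x)-|f|_Q|>t/2\}$ and $\{x\in Q:M_Q(f)(x)-|f|_Q>t/2\}$, and it suffices to estimate each piece separately.

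For the first set I simply invoke Theorem \ref{JN}: since the hypothesis $f\in BMO(\mathbb{R}^n)$ with $f^-\in L^\infty(\mathbb{R}^n)$ is, by Theorem \ref{def}, the same as $f\in\bar{\mathcal{L}}^{1,n}$, the inequality \eqref{eqJN} with $p=1$ yields $\big|\{x\in Q:|f(x)-|f|_Q|>t/2\}\big|\le c_1\exp\big(-c_2 t/(2\|f\|_{\bar{\mathcal{L}}^{1,n}})\big)|Q|$, which is already of the desired form.

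The heart of the argument, and the step I expect to be the main obstacle, is the exponential bound for the second set, i.e.\ an estimate $\big|\{x\in Q:M_Q(f)(x)>|f|_Q+s\}\big|\le A\,e^{-Bs}|Q|$ with $A,B$ depending only on $n$ and $|f|_*$ (in particular independent of the cube $Q$ and of the possibly large number $|f|_Q$). The idea is to exploit that $|f|\in BMO(\mathbb{R}^n)$ with $\||f|\|_*\le 2|f|_*$, and to run a Calder\'on--Zygmund stopping-time iteration on $|f|$ over $Q$, in the same spirit as the proof of Theorem \ref{JN}. Fixing $s>0$ and a height $\lambda=|f|_Q+s$, I select the maximal subcubes on which the mean of $|f|$ first exceeds $\lambda$; these cover $\{M_Q(f)>\lambda\}$, and on each selected cube $Q_j$ the mean of $|f|$ lies between $\lambda$ and $2^n\lambda$. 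The elementary bound $\sum_j|Q_j|\le (|f|_Q/\lambda)|Q|$ alone yields only polynomial decay, so the decisive point is to iterate: on each $Q_j$ the recentred function $|f|-|f|_{Q_j}$ still has $BMO$ norm at most $2|f|_*$, so raising the height by a fixed multiple of $|f|_*$ removes a fixed proportion of the measure, and repeating this roughly $s/|f|_*$ times produces the geometric factor that upgrades polynomial to exponential decay.

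Two technical matters must be handled in this last step. First, $M_Q$ is an uncentred maximal operator over all subcubes, so the clean nested structure of a dyadic Calder\'on--Zygmund decomposition is not immediately available; I would either dominate $M_Q$ by finitely many shifted dyadic maximal operators (the one-third-shift trick) or replace the stopping cubes by a Besicovitch--Vitali subfamily, neither of which affects the final exponential form. Second, one must keep the constants independent of $|f|_Q$, which is precisely why the iteration is driven by the $BMO$ oscillation $|f|_*$ rather than by the size $|f|_Q$, so that the decay rate $B$ depends only on $n$ and $|f|_*$. Combining the two distributional estimates and adjusting $a_1,a_2$ then gives the claimed inequality.
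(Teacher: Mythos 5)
First, a point of reference: the paper does not actually prove Theorem \ref{JNM1}; it quotes it from the submitted work \cite{HD}. The closest in-paper arguments are the proof of Theorem \ref{JNMalpha} (which uses Theorem \ref{JNM1} as a black box after splitting $Q$ by the sign of $f-|f|_{Q}$) and the proof of Theorem \ref{JNLIP} (which, in the H\"{o}lder range, first shows $M_{Q}$ preserves the class, see \eqref{MLIP}, and then transfers the John--Nirenberg bound from $(M_{Q}f)_{Q}$ to $M_{Q}f(x)$). Your reduction is correct and complete: a.e.\ on $Q$ one has $|f-M_{Q}f|=M_{Q}f-f\leq \big|f-|f|_{Q}\big|+\big(M_{Q}f-|f|_{Q}\big)$, the oscillation term is handled by Theorem \ref{JN} through the equivalence in Theorem \ref{def}, and you correctly isolate the key lemma $\big|\{x\in Q: M_{Q}f(x)>|f|_{Q}+s\}\big|\leq Ae^{-Bs}|Q|$ with $B$ depending only on $n$ and $|f|_{*}$. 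That lemma is true, and your stopping-time iteration (additive increments of size $\approx|f|_{*}$, a fixed fraction of measure lost per step, using that a stopping cube's mean exceeds its parent's by at most $2^{n}$ times the BMO norm) does prove it for the \emph{dyadic} maximal function.

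The gap is in the passage from the dyadic maximal function to the uncentred $M_{Q}$: both of your proposed fixes, as stated, break down, and they break down exactly because of the issue you list separately (independence of $|f|_{Q}$). The shifted-dyadic domination gives $M_{Q}(|f|)\leq C_{n}\max_{\alpha}M^{d,\alpha}(|f|)$ with a multiplicative constant $C_{n}>1$, so the set $\{M_{Q}f>|f|_{Q}+s\}$ is only compared with sets of the form $\{M^{d,\alpha}(|f|)>(|f|_{Q}+s)/C_{n}\}$; once $|f|_{Q}>s/(C_{n}-1)$ this threshold drops below $|f|_{Q}$ and no decay can be extracted. The Besicovitch selection handles the first generation (bounded overlap plus $\sum_{j}|R_{j}|(\lambda-|f|_{Q})\leq C_{n}\int_{Q}\big||f|-|f|_{Q}\big|dx$), but the iteration then fails: a cube $R\ni x$, $R\subset Q$, witnessing a higher level need not be contained in the first-generation cube containing $x$, so ``removing a fixed proportion per step'' cannot be repeated. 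The missing idea is to recentre \emph{before} covering or dyadicizing, not only inside the iteration: set $h=\big||f|-|f|_{Q}\big|$, note the pointwise bound $M_{Q}f(x)-|f|_{Q}\leq M_{Q}(h)(x)$, and observe that $h$ is a nonnegative BMO function with norm at most $4|f|_{*}$ and small mean, $h_{Q}\leq 2|f|_{*}$. For $h$ the multiplicative loss is harmless, since $(h_{Q}+\tau)/C_{n}\geq h_{Q}+\tau/(2C_{n})$ as soon as $\tau\geq 4C_{n}|f|_{*}$, so the shifted-dyadic comparison followed by your dyadic iteration closes the argument (small $\tau$ being trivial). With that reordering your proof is complete. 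Alternatively, one can mimic the paper's route for Theorem \ref{JNLIP}: show $M_{Q}f\in BMO(Q)$ with norm controlled by $|f|_{*}$ and $(M_{Q}f)_{Q}-|f|_{Q}\leq C|f|_{*}$ (via John--Nirenberg/$L\log L$), then apply the ordinary John--Nirenberg inequality to both $f$ and $M_{Q}f$.
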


Meanwhile, we now establish the John-Nirenberg inequality associated to fractional maximal functions.

\begin{theorem}\label{JNMalpha}
Suppose $f\in BMO(\mathbb{R}^n)$ with $f^{-}\in L^{\infty}(\mathbb{R}^n)$, then for any cube $Q$ and $t>0$, we have
$$\Big|\big\{x\in Q: |f(x)-|Q|^{-\alpha/n}M_{\alpha,Q}(f)(x)|>t\big\}\Big|\leq a_{1}e^{-a_{2}t}|Q|.$$
where $a_{1}$ and $a_{2}$ are positive constants.
\end{theorem}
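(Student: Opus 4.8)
The plan is to reduce this fractional John--Nirenberg inequality to the two exponential estimates already in hand: the John--Nirenberg inequality for $\bar{\mathcal{L}}^{1,n}$ functions (Theorem~\ref{Cha0}(ii), available through the identification in Theorem~\ref{def}) and the John--Nirenberg inequality for the ordinary local maximal function (Theorem~\ref{JNM1}). First I would rewrite the normalized fractional maximal function in scale--invariant form: for any subcube $Q'\subseteq Q$ with $x\in Q'$,
$$|Q|^{-\alpha/n}\frac{1}{|Q'|^{1-\alpha/n}}\int_{Q'}|f(y)|\,dy=\Big(\frac{|Q'|}{|Q|}\Big)^{\alpha/n}|f|_{Q'},$$
so that, abbreviating $\widetilde M(x):=|Q|^{-\alpha/n}M_{\alpha,Q}(f)(x)$, we have $\widetilde M(x)=\sup_{Q\supseteq Q'\ni x}(|Q'|/|Q|)^{\alpha/n}|f|_{Q'}$.

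The structural heart of the argument is the two--sided bound $|f|_Q\leq \widetilde M(x)\leq M_Q(f)(x)$ for every $x\in Q$. The lower bound follows by taking $Q'=Q$ in the supremum (legitimate since $x\in Q$), whose contribution is exactly $|f|_Q$. The upper bound is where the localization built into $M_{\alpha,Q}$ is indispensable: every competing cube satisfies $Q'\subseteq Q$, so the weight $(|Q'|/|Q|)^{\alpha/n}$ never exceeds $1$ and each term is dominated by $|f|_{Q'}\leq M_Q(f)(x)$. Were the supremum taken over all cubes containing $x$ instead of only subcubes of $Q$, this weight could exceed $1$ and the comparison would fail, so this is the one place requiring care.

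With the two--sided bound available I would split the level set according to the sign of $f(x)-\widetilde M(x)$. On $\{f(x)\geq\widetilde M(x)\}$ the lower bound gives $0\leq f(x)-\widetilde M(x)\leq f(x)-|f|_Q\leq |f(x)-|f|_Q|$; on $\{f(x)<\widetilde M(x)\}$ the upper bound together with $M_Q(f)(x)\geq|f(x)|\geq f(x)$ (Lebesgue differentiation) gives $\widetilde M(x)-f(x)\leq M_Q(f)(x)-f(x)=|f(x)-M_Q(f)(x)|$. Hence
$$\big\{x\in Q:|f(x)-\widetilde M(x)|>t\big\}\subseteq\big\{x\in Q:|f(x)-|f|_Q|>t\big\}\cup\big\{x\in Q:|f(x)-M_Q(f)(x)|>t\big\}.$$

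Finally I would bound the two sets on the right separately. Since $f\in BMO(\mathbb{R}^n)$ with $f^-\in L^\infty(\mathbb{R}^n)$ is precisely the condition $f\in\bar{\mathcal{L}}^{1,n}$ (Theorem~\ref{def}), the first set is controlled by Theorem~\ref{Cha0}(ii), yielding $c_1e^{-c_2t}|Q|$; the second is controlled directly by Theorem~\ref{JNM1}, yielding $a_1e^{-a_2t}|Q|$. Adding the two exponential bounds and relabelling the constants produces the asserted estimate. The main obstacle is thus concentrated in the upper comparison $\widetilde M(x)\leq M_Q(f)(x)$; once the weight is recognized to be at most $1$, the rest is a routine reduction to the two known John--Nirenberg lemmas.
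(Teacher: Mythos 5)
Your proposal is correct and follows essentially the same route as the paper: the key two-sided bound $|f|_{Q}\leq |Q|^{-\alpha/n}M_{\alpha,Q}(f)(x)\leq M_{Q}(f)(x)$, a split of the level set according to sign, and the reduction of the two resulting pieces to the John--Nirenberg inequality for $\bar{\mathcal{L}}^{1,n}$ (Theorems \ref{def}, \ref{JN}) and to Theorem \ref{JNM1}, respectively. Your splitting by the sign of $f(x)-|Q|^{-\alpha/n}M_{\alpha,Q}(f)(x)$ is in fact a slightly cleaner bookkeeping than the paper's decomposition into $E=\{f\geq |f|_{Q}\}$ and $F=Q\setminus E$, but the substance of the argument is identical.
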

\begin{proof}
For any cube $Q$ and $x\in Q$, it follows from the definitions of $M_{Q}$ and $M_{\alpha,Q}$ that
$$|f|_{Q}\leq |Q|^{-\alpha/n}M_{\alpha,Q}(f)(x)\leq M_{Q}(f)(x).$$
Set $E:=\{y\in Q: f(y)\geq |f|_{Q}\}$ and $F:=Q\backslash E$, then
\begin{eqnarray*}
\begin{aligned}
&\Big|\big\{x\in Q: |f(x)-|Q|^{-\alpha/n}M_{\alpha,Q}(f)(x)|>t\big\}\Big|\\
&= \Big|\big\{x\in E: f(x)-|Q|^{-\alpha/n}M_{\alpha,Q}(f)(x)>t\big\}\Big|+\Big|\big\{x\in F: |Q|^{-\alpha/n}M_{\alpha,Q}(f)(x)-f(x)>t\big\}\Big|\\
&\leq \Big|\big\{x\in E: f(x)-|f|_{Q}>t\big\}\Big|+\Big|\big\{x\in F: M_{Q}(f)(x)-f(x)>t\big\}\Big|\\
&\leq a_{1}e^{-a_{2}t}|Q|.
\end{aligned}
\end{eqnarray*}
We complete the proof of Theorem \ref{JNMalpha}.
\end{proof}

\section{The case $n< \lambda\leq n+p$.}\label{LIP}

In the theory of PDEs one encounters two types of a priori estimates: $L^{p}$ norm estimates and Schauder estimates (estimates in the space of the H\"{o}lder continuous function). An attempt to achieve
this is provided by the integral characterization of the H\"{o}lder continue functions.

\subsection{A characterization of nonnegative H\"{o}lder continuous functions}

 In this section, we give the equivalent definitions of nonnegative H\"{o}lder continuous functions.

\begin{theorem}\label{Cmain}
Let $1\leq p<\infty$ and $0<\beta\leq 1$. For the function $f\in L_{loc}(\Omega)$, the following three statements are equivalent:
\begin{enumerate}
\item [\rm(1)] $f\in C^{0,\beta}(\Omega)$ and $f\geq 0$.
\item [\rm(2)] There exists a constant $C_{1}$ such that
$$\big|f(x)-|f(y)|\big|\leq C_{1}|x-y|^{\beta}$$
for almost every $x$ and $y$.
\item [\rm(3)] There exists a constant $C_{2}$ such that for any $x_{0}\in \Omega$ and $0<\rho<diam \Omega$
$$\Bigg(\frac{1}{|\Omega(x_{0},\rho)|}\int_{\Omega(x_{0},\rho)}\big|f(x)-|f|_{\Omega(x_{0},r)}\big|^pdx\Bigg)^{1/p}\leq C_{2}\rho^{\beta}.$$
\end{enumerate}
\end{theorem}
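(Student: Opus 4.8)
The plan is to establish the cycle of implications $(1)\Rightarrow(2)\Rightarrow(3)\Rightarrow(1)$. The first implication is immediate: if $f\in C^{0,\beta}(\Omega)$ and $f\geq 0$, then $|f(y)|=f(y)$, so $\big|f(x)-|f(y)|\big|=|f(x)-f(y)|\leq \|f\|_{C^{0,\beta}(\Omega)}|x-y|^{\beta}$. For $(2)\Rightarrow(3)$ I would first observe that taking $y=x$ in (2) forces $\big|f(x)-|f(x)|\big|=0$, hence $f\geq 0$ almost everywhere; consequently $|f|_{\Omega(x_{0},\rho)}$ coincides with the ordinary mean $f_{\Omega(x_{0},\rho)}$. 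Writing $f(x)-f_{\Omega(x_{0},\rho)}=|\Omega(x_{0},\rho)|^{-1}\int_{\Omega(x_{0},\rho)}\big(f(x)-f(y)\big)\,dy$ and using the pointwise bound from (2) together with $|x-y|\leq 2\sqrt{n}\,\rho$ for $x,y\in\Omega(x_{0},\rho)$ gives $\big|f(x)-|f|_{\Omega(x_{0},\rho)}\big|\leq C\rho^{\beta}$, and (3) follows after taking the $L^{p}$ average.

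The substance of the argument is $(3)\Rightarrow(1)$, which I would split into a nonnegativity step and a regularity step. Write $a_{\rho}:=|f|_{\Omega(x_{0},\rho)}\geq 0$. Hypothesis (3) forces $f\in L^{p}_{loc}(\Omega)$, so at almost every $x_{0}$—a Lebesgue point of $f$ in the $L^{p}$ sense and of $|f|$—the Lebesgue differentiation theorem (valid here because property \eqref{A} makes the family $\{\Omega(x_{0},\rho)\}$ a regular differentiation basis) gives $a_{\rho}\to|f(x_{0})|$ as $\rho\to 0$. On the other hand, by the triangle inequality in $L^{p}$, $|f(x_{0})-a_{\rho}|\leq\big(|\Omega(x_{0},\rho)|^{-1}\int_{\Omega(x_{0},\rho)}|f(x_{0})-f(x)|^{p}dx\big)^{1/p}+\big(|\Omega(x_{0},\rho)|^{-1}\int_{\Omega(x_{0},\rho)}|f(x)-a_{\rho}|^{p}dx\big)^{1/p}$; the first term tends to $0$ at an $L^{p}$-Lebesgue point and the second is $\leq C_{2}\rho^{\beta}\to 0$ by (3). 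Hence $a_{\rho}\to f(x_{0})$ as well, and comparing the two limits forces $f(x_{0})=|f(x_{0})|$, i.e. $f\geq 0$ almost everywhere.

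Once nonnegativity is secured, the regularity step reduces to the classical theory. Since $f\geq 0$ a.e. we have $|f|=f$ a.e., so $a_{\rho}=|f|_{\Omega(x_{0},\rho)}$ is exactly the mean $f_{\Omega(x_{0},\rho)}$, and the hypothesis (3) becomes precisely the Campanato condition $f\in\mathcal{L}^{p,\lambda}(\Omega)$ with $\lambda=n+p\beta$. By the classical characterization recorded as item (iv) of the Introduction (Campanato, Meyers), this yields $f\in C^{0,\beta}(\Omega)$, closing the cycle.

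The main obstacle is the presence of $|f|$ rather than $f$ inside the averaging, which blocks a direct appeal to the classical Campanato argument and is exactly what encodes the sign constraint $f\geq 0$. I expect the delicate point to be extracting nonnegativity: the device above—observing that (3) controls the distance of $f$ to the \emph{nonnegative} quantity $a_{\rho}$, so that the competing limits $a_{\rho}\to|f(x_{0})|$ and $a_{\rho}\to f(x_{0})$ can only be reconciled by $f(x_{0})\geq 0$—is what makes the reduction work. As an alternative route I would note the elementary inequality $\big||f(x)|-a\big|\leq|f(x)-a|$, valid for every $a\geq 0$, which transfers the oscillation bound from $f$ directly to $|f|$ and shows $|f|\in\mathcal{L}^{p,\lambda}(\Omega)=C^{0,\beta}(\Omega)$; combined with the nonnegativity step this again gives (1).
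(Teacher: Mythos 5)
Your proposal is correct, and the interesting part --- the implication $(3)\Rightarrow(1)$ --- follows a genuinely different route from the paper. The paper proves $(3)\Rightarrow(1)$ self-containedly: it establishes the two scale-comparison estimates $\big|f_{R_{1}}-|f|_{R_{2}}\big|\leq C\rho_{1}^{\beta}$ and $\big||f|_{R_{1}}-|f|_{R_{2}}\big|\leq C\rho_{1}^{\beta}$ for nested sets of comparable size, chains them dyadically from scale $\rho\to 0$ up to scale $|x-y|$, connects the chains at $x$ and at $y$ through the intermediate sets $U$ and $V$, and concludes $|f(x)-f(y)|\leq C|x-y|^{\beta}$ via Lebesgue differentiation; only afterwards does it extract $f^{-}=0$ a.e.\ by bounding $\int f^{-}$ by oscillation terms of size $O(\rho^{\beta})$. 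You invert this order: you first extract nonnegativity by playing the two limits $a_{\rho}=|f|_{\Omega(x_{0},\rho)}\to|f(x_{0})|$ (Lebesgue differentiation of $|f|$) and $a_{\rho}\to f(x_{0})$ (Minkowski plus hypothesis (3) at an $L^{p}$-Lebesgue point) against each other, and once $f\geq 0$ a.e.\ the hypothesis becomes the classical Campanato condition with $\lambda=n+p\beta$, so you can cite the Campanato--Meyers equivalence recorded as item (iv) of the Introduction. Your route is shorter and cleanly isolates the new content of the theorem (the sign information encoded by the $|f|$-averages), at the price of invoking the classical theorem as a black box; the paper's chaining argument is longer but self-contained, and it has the structural feature of producing the H\"older estimate \emph{before} knowing the sign of $f$, which is exactly what the mixed $f$/$|f|$ averages make possible. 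One small repair is needed in your $(2)\Rightarrow(3)$: since (2) holds only for a.e.\ pair $(x,y)$ and the diagonal is a null set of $\Omega\times\Omega$, you cannot literally set $y=x$; instead note that for a.e.\ $x$ one has $f(x)\geq |f(y)|-C_{1}|x-y|^{\beta}\geq -C_{1}|x-y|^{\beta}$ for a.e.\ $y$, and let $y\to x$ through such values. Alternatively you can skip nonnegativity here entirely, as the paper does, by averaging (2) in $y$ over $\Omega(x_{0},\rho)$ to get $\big|f(x)-|f|_{\Omega(x_{0},\rho)}\big|\leq C\rho^{\beta}$ directly.
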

\begin{proof}
${\rm (1)}\Longrightarrow {\rm (2)}$. Assume $f\in C^{0,\alpha}(\Omega)$ and $f\geq 0$. For $x,y\in \Omega$, we conclude that
\begin{equation*}
\big|f(x)-|f(y)|\big|=\big|f(x)-f(y)\big|\leq \|f\|_{C^{0,\beta}(\Omega)}.
\end{equation*}

${\rm (2)}\Longrightarrow {\rm (3)}$. If $x,y\in \Omega(x_{0},\rho)$, from the fact that $\big|f(x)-|f(y)|\big|\leq C_{1}|x-y|^{\beta}$ we have
$$\big|f(x)-|f(y)|\big|\leq C_{1}\rho^{\alpha},$$
hence
$$\big|f(x)-|f|_{\Omega(x_{0},\rho)}|\big|\leq C\rho^{\beta}.$$
Consequently,
$$\int_{\Omega(x_{0},\rho)}|f(x)-|f|_{\Omega(x_{0},\rho)}|^pdx\leq C\rho^{p\beta+n}.$$

Now we give the proof of $(3)\Longrightarrow(1)$. For any $x,y\in \Omega$, take $\Omega_{0}=\Omega(x,\rho)$ with $\rho\leq |x-y|$ and $U=\Omega(x,2|x-y|)$, define $\Omega_{k}=\Omega(x,2^{k}\rho)$ for $0\leq k\leq \tilde{k}$, where $\tilde{k}$ is the first integer such that $2^{\tilde{k}}\rho\geq |x-y|$.

Notice that for any $R_{1}=\Omega(x_{1},\rho_{1}), R_{2}=\Omega(x_{2},\rho_{2})$ with $R_{1}\subset R_{2}$ and $\rho_{2}\leq 2\rho_{1}$, we have
\begin{eqnarray*}
\begin{aligned}
\big|f_{R_{1}}-|f|_{R_{2}}\big|&=&\frac{1}{|R_{1}|}\int_{R_{1}}\big|f(z)-|f|_{R_{2}}\big|dz\leq C\rho_{1}^{\beta},
\end{aligned}
\end{eqnarray*}
and
\begin{eqnarray*}
\begin{aligned}
&\big||f|_{R_{1}}-|f|_{R_{2}}\big|=\frac{1}{|R_{1}|}\int_{R_{1}}\big||f|_{R_{1}}-|f|_{R_{2}}\big|dz\\
&\leq \frac{1}{|R_{1}|}\int_{R_{1}}\big|f(z)-|f|_{R_{1}}\big|dz+\frac{1}{|R_{1}|}\int_{R_{1}}\big|f(z)-|f|_{R_{2}}\big|dz
\leq C\rho_{1}^{\beta}
\end{aligned}
\end{eqnarray*}
Therefore,
\begin{eqnarray*}
\begin{aligned}
|f_{\Omega_{0}}-|f|_{U}|&\leq \big|f_{\Omega_{0}}-|f|_{\Omega_{1}}\big|+\sum_{k=1}^{\tilde{k}-1}\big||f|_{\Omega_{k}}-|f|_{\Omega_{k+1}}\big|+\big||f|_{\Omega_{\tilde{k}}}-|f|_{U}\big|\\
&\leq C\sum_{k=0}^{\tilde{k}}(2^{k}\rho)^{\alpha}\leq C|x-y|^{\beta}.
\end{aligned}
\end{eqnarray*}

A similar argument can be made for the point $y$ with $\Omega'_{0}=\Omega(x,\rho')$ and $V=\Omega(y,3|x-y|)$. Thus,
\begin{eqnarray*}
\begin{aligned}
&\big|f_{\Omega_{0}}-f_{\Omega'_{0}}\big|\leq \big|f_{\Omega_{0}}-|f|_{U}\big|+\big||f|_{U}-|f|_{V}\big|+\big||f|_{V}-f_{\Omega'_{0}}\big|\leq C|x-y|^{\beta}.
\end{aligned}
\end{eqnarray*}

From the differentiation theorem of Lebesgue we know that $f_{\Omega_{0}}\rightarrow f(x)$ as $\rho\rightarrow 0$, and $f_{\Omega'_{0}}\rightarrow f(y)$ in $L^{1}(\Omega)$ as $\rho'\rightarrow 0$.
It follows that
$$|f(x)-f(y)|\leq C|x-y|^{\beta}.$$
Therefore, we can conclude that $f\in C^{0,\alpha}$. Meanwhile,
\begin{eqnarray*}
\begin{aligned}
&\frac{2}{|\Omega(x,\rho)|}\int_{|\Omega(x,\rho)|}f^{-}(z)dz\leq \frac{1}{|\Omega(x,\rho)|}\int_{\Omega(x,\rho)}\big||f(z)|-f(z)\big|dz\\
&\leq \frac{1}{|\Omega(x,\rho)|}\int_{\Omega(x,\rho)}\big||f(z)|-|f|_{\Omega(x,\rho)}\big|dz+\frac{1}{|\Omega(x,\rho)|}\int_{\Omega(x,\rho)}\big|f(z)-|f|_{\Omega(x,\rho)}\big|dz\\
&\leq C\rho^{\beta}\rightarrow 0,\quad  \text{as} \quad \rho\rightarrow 0,
\end{aligned}
\end{eqnarray*}
which shows that $f^{-}(x)=0, a.e. x\in \Omega$.
\end{proof}

In fact, we can also obtain the following results. The remaining proofs are similar to the ones in Proposition \ref{Pp} and we leave the
details to the interested reader.
\begin{theorem}
Let $0<p<\infty$ and $0<\beta\leq 1$. Then $f\in C^{0,\beta}(\Omega)$ and $f\geq 0$ if and only if
$$\sup_{Q\subset Q_{0}}\inf_{c\geq 0}\rho^{-n-\beta}\int_{Q}|u(x)-c|^pdx<\infty.$$
\end{theorem}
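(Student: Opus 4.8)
The plan is to read this statement as the nonnegative H\"older analogue of Theorem \ref{Pp}, and to prove it by transferring the infimum-over-$c\ge0$ formulation to the mean-centered condition ${\rm(3)}$ of Theorem \ref{Cmain}, after which Theorem \ref{Cmain} itself supplies the equivalence with ``$f\in C^{0,\beta}(\Omega)$ and $f\ge0$''. The one elementary fact that makes the constraint $c\ge0$ cooperate with the absolute mean $|f|_{\Omega(x_0,\rho)}$ is the inequality $\big||a|-c\big|\le|a-c|$, valid for every $c\ge0$; this is the same device used in the proofs of Theorems \ref{Pc1} and \ref{Pp}.

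For the easy direction, suppose $f\in C^{0,\beta}(\Omega)$ with $f\ge0$. Then $|f|=f$ and $|f|_{\Omega(x_0,\rho)}=f_{\Omega(x_0,\rho)}$, so H\"older continuity gives $\big|f(x)-|f|_{\Omega(x_0,\rho)}\big|\le C\rho^\beta$ pointwise on $\Omega(x_0,\rho)$. Choosing the admissible constant $c=|f|_{\Omega(x_0,\rho)}\ge0$ and integrating, one obtains $\int_{\Omega(x_0,\rho)}|f-c|^p\,dx\le C\rho^{n+p\beta}$, which is exactly the claimed finiteness of the supremum.

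Conversely, assume the supremum is finite and, for each cube $Q=\Omega(x_0,\rho)$, let $c_Q\ge0$ be the nonnegative minimizer of $\int_Q|f-c|^p\,dx$, so that $\int_Q|f-c_Q|^p\,dx\le C\rho^{n+p\beta}$. The goal is to verify condition ${\rm(3)}$ of Theorem \ref{Cmain}. From $\big|f-|f|_Q\big|\le|f-c_Q|+\big|c_Q-|f|_Q\big|$ together with $c_Q\ge0$ one estimates the constant term by
\[
\big|c_Q-|f|_Q\big|\le\frac1{|Q|}\int_Q\big||f(x)|-c_Q\big|\,dx\le\frac1{|Q|}\int_Q|f(x)-c_Q|\,dx .
\]
When $p\ge1$ Jensen's inequality bounds this $L^1$ mean by the $L^p$ mean $\big(\frac1{|Q|}\int_Q|f-c_Q|^p\big)^{1/p}\le C\rho^\beta$ (using $|\Omega(x_0,\rho)|\ge A\rho^n$), and substituting back via Minkowski gives condition ${\rm(3)}$; Theorem \ref{Cmain} then yields $f\in C^{0,\beta}(\Omega)$ with $f\ge0$.

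The main obstacle is the range $0<p<1$, where Jensen runs in the wrong direction and the $L^1$ mean cannot be dominated by the $L^p$ mean. To overcome this I would establish the H\"older-scaled John--Nirenberg inequality that is the exact counterpart of Theorem \ref{JNcp}: after normalizing the supremum to $1$, the same Calder\'on--Zygmund stopping-time iteration applied to $|f-c_Q|^p$, now at the super-critical level $\rho^{-n-p\beta}$, produces an exponential tail bound of the form $\big|\{x\in Q:|f(x)-c_Q|>t\}\big|\le c_1 e^{-c_2 t\rho^{-\beta}}|Q|$. Integrating this tail upgrades the $L^p$ control to $L^1$ control, $\frac1{|Q|}\int_Q|f-c_Q|\,dx\le C\rho^\beta$, so that the $p=1$ estimate of the previous paragraph applies and the argument closes as before (alternatively one may invoke the $0<p<1$ Campanato--H\"older equivalence of \cite{WZTmn} once the means have been compared with the $c_Q$). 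The only points requiring care are carrying the scaling factor $\rho^\beta$ through the iteration and keeping track of the constant $A$ when passing between $|\Omega(x_0,\rho)|$ and $\rho^n$; both are routine adaptations of the arguments already carried out for Theorems \ref{JNcp} and \ref{Pp}.
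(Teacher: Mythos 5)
Your proposal is correct and follows essentially the route the paper itself intends: the paper gives no explicit proof, deferring to Theorem~\ref{Pp}, and your argument is exactly that transfer---the Theorem~\ref{Pc1}-style comparison of the nonnegative minimizer $c_{Q}$ with $|f|_{Q}$ (via $\bigl||a|-c\bigr|\le|a-c|$ for $c\ge 0$) reducing matters to condition (3) of Theorem~\ref{Cmain} when $p\ge 1$, plus a $\rho^{\beta}$-scaled Calder\'on--Zygmund iteration in the spirit of Theorem~\ref{JNcp} to upgrade $L^{p}$ to $L^{1}$ control when $0<p<1$. One remark: you have implicitly (and correctly) read the paper's exponent $\rho^{-n-\beta}$ as $\rho^{-n-p\beta}$ (and its $u$ as $f$), which is the scaling that actually matches $C^{0,\beta}(\Omega)$ in Theorem~\ref{Cmain}.
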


\subsection{Characterizations of $\mathcal{\bar{L}}^{1,\lambda}(\mathbb{R}^n)$ function associated to maximal functions.}

To obtain the result of maximal function characterizations of $\mathcal{\bar{L}}^{1,\lambda}(\mathbb{R}^n)$, we need the locally boundedness of $M_{Q}$ on $\mathcal{\bar{L}}^{1,\lambda}(Q)$.

\begin{theorem}
If $f\in \mathcal{\bar{L}}^{1,\lambda}(Q)$ with $n<\lambda\leq n+1$, then so does $M_{Q}(f)$ and
\begin{equation}\label{MLIP}
\|M_{Q}f\|_{\mathcal{\bar{L}}^{1,\lambda}(Q)}\leq C\|f\|_{\mathcal{\bar{L}}^{1,\lambda}(Q)}.
\end{equation}
\end{theorem}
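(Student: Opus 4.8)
The plan is to reduce \eqref{MLIP} to a pointwise Hölder estimate for $M_{Q}f$ by invoking the characterization already in hand. Setting $\beta=\lambda-n\in(0,1]$ and taking $p=1$ in Theorem \ref{Cmain}, membership $f\in\mathcal{\bar{L}}^{1,\lambda}(Q)$ is equivalent to $f\geq 0$ together with $f\in C^{0,\beta}(Q)$; moreover, since $|\Omega(x_{0},\rho)|\approx\rho^{n}$, the implications ${\rm(1)}\Rightarrow{\rm(3)}$ and ${\rm(3)}\Rightarrow{\rm(1)}$ in that proof show the two sides have comparable seminorms, $\|f\|_{\mathcal{\bar{L}}^{1,\lambda}(Q)}\approx\|f\|_{C^{0,\beta}(Q)}$. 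Because $M_{Q}f\geq 0$ is automatic (it is a supremum of averages of $|f|$), once I prove $M_{Q}f\in C^{0,\beta}(Q)$ with $\|M_{Q}f\|_{C^{0,\beta}(Q)}\leq C\|f\|_{C^{0,\beta}(Q)}$, chaining the two equivalences yields the claimed bound. Thus the whole statement collapses to showing that the local maximal operator preserves nonnegative Hölder continuity with controlled seminorm.

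For the Hölder bound I would use the standard translation/competitor argument, using $f=|f|$ throughout. Fix $x,y\in Q$ and, given $\varepsilon>0$, select $Q'=Q(c',r')$ with $x\in Q'\subseteq Q$ and $\frac{1}{|Q'|}\int_{Q'}f>M_{Q}f(x)-\varepsilon$. If I can produce a shifted cube $\widetilde{Q}=Q'+v$ with $y\in\widetilde{Q}\subseteq Q$, then $M_{Q}f(y)\geq\frac{1}{|Q'|}\int_{Q'}f(z+v)\,dz$, and hence $M_{Q}f(x)-M_{Q}f(y)\leq\frac{1}{|Q'|}\int_{Q'}\bigl(f(z)-f(z+v)\bigr)\,dz+\varepsilon\leq\|f\|_{C^{0,\beta}(Q)}|v|^{\beta}+\varepsilon$. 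Interchanging $x$ and $y$ and letting $\varepsilon\to 0$ then gives the Hölder estimate, with the constant dictated by the attainable size of $|v|$.

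The main obstacle is precisely the admissibility constraint $\widetilde{Q}\subseteq Q$ encoded in the definition of $M_{Q}$: the naive choice $v=y-x$ can push $Q'$ across $\partial Q$ when $Q'$ abuts a face of $Q$, and then $\widetilde{Q}$ is no longer a legal competitor. My remedy is to relocate the center \emph{coordinatewise} rather than translate rigidly: writing the $i$-th edge of $Q$ as $[a_{i},b_{i}]$, I take $m_{i}^{*}$ to be the point of $[y_{i}-r',\,y_{i}+r']$ closest to $c'_{i}$. A short case analysis (splitting according to whether $c'_{i}<y_{i}-r'$, $c'_{i}>y_{i}+r'$, or $c'_{i}\in[y_{i}-r',y_{i}+r']$) shows simultaneously that $|m_{i}^{*}-y_{i}|\leq r'$, that $m_{i}^{*}\in[a_{i}+r',\,b_{i}-r']$, and that $|m_{i}^{*}-c'_{i}|\leq|x-y|$ in each coordinate. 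The first gives $y\in\widetilde{Q}$, the second gives $\widetilde{Q}=Q(m^{*},r')\subseteq Q$, and the third gives $|v|=|m^{*}-c'|\leq\sqrt{n}\,|x-y|$. Feeding this into the estimate of the previous paragraph produces $|M_{Q}f(x)-M_{Q}f(y)|\leq n^{\beta/2}\|f\|_{C^{0,\beta}(Q)}|x-y|^{\beta}$, which is exactly the required seminorm control; combined with $M_{Q}f\geq 0$ and the equivalence of seminorms, this establishes \eqref{MLIP}. The verification that the single point $m^{*}$ meets all three coordinatewise constraints at once is the only delicate step, and I expect it to be entirely elementary.
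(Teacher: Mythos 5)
Your proof is correct, but it takes a genuinely different route from the paper's. The paper uses Theorem \ref{Cmain} only once, to conclude that $f\geq 0$, and then works entirely on the Campanato side with a Bennett--DeVore--Sharpley type argument: for each subcube $R$ it splits $M_{Q}f$ into a local part $F_{1}$ (built from cubes inside an enlarged cube $\tilde{R}$) and a far part $F_{2}$, performs a Calder\'on--Zygmund decomposition of $f$ on $\tilde{R}$ at height $F_{R}$, writes $f\chi_{\tilde{R}}=b+g$ with $\|g\|_{L^{\infty}}\leq F_{R}$ and $\|b\|_{L^{2}}$ controlled by the Campanato seminorm, and then bounds the oscillation of $F_{1}$ via the $L^{2}$-boundedness of the maximal operator and that of $F_{2}$ by a direct comparison of averages over nested cubes. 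You instead use Theorem \ref{Cmain} in both directions, collapsing the statement to the claim that $M_{Q}$ preserves nonnegative $C^{0,\beta}(Q)$ with controlled seminorm, which you prove by the shifted-competitor-cube argument; your coordinatewise construction of the shifted center $m^{*}$ is sound (in each of the three cases one checks $m_{i}^{*}\in[y_{i}-r',y_{i}+r']$, $m_{i}^{*}\in[a_{i}+r',b_{i}-r']$ using $c_{i}'\in[a_{i}+r',b_{i}-r']$ and $y_{i}\in[a_{i},b_{i}]$, and $|m_{i}^{*}-c_{i}'|\leq|x_{i}-y_{i}|$ using $|x_{i}-c_{i}'|\leq r'$), after which the estimate is a change of variables plus the H\"older bound, since both $z$ and $z+v$ stay in $Q$. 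Your route is shorter and more elementary, exploiting what is special about $n<\lambda\leq n+1$; what it costs is reliance on the quantitative content of Theorem \ref{Cmain} (legitimate, since its proof tracks constants linearly, so $\|f\|_{\mathcal{\bar{L}}^{1,\lambda}(Q)}\approx\|f\|_{C^{0,\beta}(Q)}$ for nonnegative $f$), whereas the paper's argument is the one that mirrors the $\lambda=n$ (BMO) case and would survive in settings with no pointwise H\"older characterization. Two small points you should make explicit: $M_{Q}f<\infty$ everywhere (immediate, since a H\"older function on the bounded cube $Q$ is bounded), and that you may first replace $f$ by its continuous representative, which changes neither $M_{Q}f$ nor any of the seminorms involved.
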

\begin{proof}
It follows from Theorem \ref{Cmain} that $f$ is nonnegative. Writing $F$ for the maximal function $M_{Q}f$ of $f$, we thus need to prove
\begin{equation}\label{MLIP1}
\frac{1}{|R|^{\lambda/n}}\int_{R}|F(x)-F_{R}|dx\leq C\|f\|_{\mathcal{\bar{L}}^{1,\lambda}(Q)}
\end{equation}
for every subcubes $R$ of $Q$.

Fix $R$ and let $3R$ denote the cube that is concentric with $R$ and has three times the diameter. Let $\tilde{R}$ be the smallest subcube of $Q$ containing $(3R)\cap Q$, and for each $x\in R$ let
\begin{eqnarray*}
\begin{aligned}
&F_{1}(x)=\sup\{f_{\bar{R}}:\bar{R}\subset \tilde{R} \quad \text{and}\quad  x\in \tilde{R}\},\\
&F_{2}(x)=\sup\{f_{\bar{R}}: \bar{R}\subset Q, x\in\tilde{R}\quad \text{and} \quad \bar{R}\cap(Q\backslash \tilde{R})\neq \varnothing\}.
\end{aligned}
\end{eqnarray*}
Meanwhile, if
$$D=\{x\in R: F(x)>F_{R}\}, D_{1}=\{x\in D: F_{1}(x)\geq F_{2}(x)\} \text{and} D_{2}=D\backslash D_{1},$$
then
$$\frac{1}{|R|^{\lambda/n}}\int_{R}|F(x)-F_{R}|dx=\frac{2}{|R|^{\lambda/n}}\int_{D}|F(x)-F_{R}|dx=\frac{2}{|R|^{\lambda/n}}\sum_{i=1}^{2}\int_{D_{i}}|F_i(x)-F_{R}|dx.$$
Hence we can establish the inequality \eqref{MLIP1} by
\begin{equation}\label{MLIP2}
\frac{1}{|R|^{\lambda/n}}\int_{D_{i}}|F_i(x)-F_{R}|dx\leq C\|f\|_{\mathcal{\bar{L}}^{1,\lambda}(Q)}.
\end{equation}

Consider first the case $i=1$. Since $f_{\tilde{R}}\leq F(x)$ for all $x\in R$, then $f_{\tilde{R}}\leq F_R$ so we may construct the Calder\'{o}n-Zygmund decomposition for $f$ and $\tilde{R}$ with respect to the constant $F_{R}$. If the resulting sequence of pairwise disjoint cubes is denoted by $\{R_{k}\}_{k=1}^{\infty}$ and if $\bar{R}_{k}$ denotes the "parent" cube of $R_{k}$, then the following properties hold:
\begin{itemize}
  \item [(i)] $\bigcup_{k}R_k\subset \tilde{R}$;
  \item [(ii)] $f_{\bar{R}_k}\leq F_R<f_{R_k}$;
  \item [(iii)] $|\bar{R}_k|=2^n|R_k|$;
  \item [(iv)] $f\leq F_R$ almost everywhere on $E=\tilde{R}\backslash (\bigcup_kR_k)$.
\end{itemize}

Define function $b$ and $g$ on $Q$ by
$$b=\sum_{k}(f-f_{\bar{R}_k})\chi_{R_k}, g=\sum_{k}f_{\bar{R}_k}\chi_{R_k}+f\chi_{E}.$$
So $f\chi_{\tilde{R}}=b+g$. It follows from ${ii}$ and $(iv)$ that
\begin{equation}\label{MLIP3}
\|g\|_{L^{\infty}(Q)}\leq F_R,
\end{equation}
while on the other hand the John-Nirenberg lemma and $(i)$ and $(iii)$ give
\begin{eqnarray}\label{MLIP4}
\begin{aligned}
\|b\|_{L^{2}(Q)}&=\Big\{\sum_{k}\int_{R_k}|f(x)-f_{\bar{R}_k}|^{2}dx\Big\}^{1/2}\\
&\leq \Big\{\sum_{k}|\bar{R_k}|^{\lambda/n}\frac{1}{|\bar{R}_k|^{\lambda/n}}\int_{\bar{R}_k}|f(x)-f_{\bar{R}_k}|^2dx\Big\}^{1/2}\\
&\leq C|R|^{\lambda/n}\|f\|_{\mathcal{\bar{L}}^{2,\lambda}(Q)}.
\end{aligned}
\end{eqnarray}
Now it follows from the definition of $F_1$ that
$$F_{1}\leq M_{Q}(f\chi_{\tilde{R}})=M_{Q}(b+g)\leq M_{Q}b+M_{Q}g,$$
so applying the Cauchy-Schwarz inequality we obtain
\begin{eqnarray*}
\begin{aligned}
\int_{D_{1}}F_{1}(x)dx&\leq |D_{1}|^{1/2}\|M_{Q}b\|_{L^{2}(Q)}+ |D_{1}|\|M_{Q}b\|_{L^{\infty}(Q)}\\
&\leq C|R|^{1/2}\|b\|_{L^{2}(Q)}+|D_{1}|\|g\|_{L^{\infty}(Q)}.
\end{aligned}
\end{eqnarray*}
Combining this with \eqref{MLIP3} and \eqref{MLIP4}, we obtain \eqref{MLIP2} for $i=1$.

Fix $x\in D_2$ and let $P$ be any subcube of $Q$ that contains $x$ and has nonempty intersection with $Q\backslash \tilde{R}$. Clearly $|P|\geq |R|.$ Let $P'$ be the smallest subcube of $Q$ containing both $P$ and $R$. Then $|P'|\leq 2^n|P|$. Arguing as before, we note that $f_{P'}\leq F_{R}$. Hence
$$f_{P}-F_{R}\leq f_{P}-f_{P'}\leq \frac{1}{|R|}\int_{P}|f(y)-f_{P'}|dy\leq 2^n\|f\|_{\mathcal{\bar{L}}^{2,\lambda}(Q)},$$
so taking the supremum over all such cubes $P$ we obtain
$$F_{2}(x)-F_{R}\leq C\|f\|_{\mathcal{\bar{L}}^{1,\lambda}(Q)}.$$
This establishes the case $i=2$ and the proof of the Theorem is completed.
\end{proof}

Similar to \cite{HD} and \cite{WZTmn}, it is easy to obtain that
\begin{theorem}\label{JNL}
Let $n<\lambda\leq n+1, \beta=\lambda-n$ and $\|f\|_{\bar{\mathcal{L}}^{1,\lambda}(\mathbb{R}^n)}=1$. There are constants $c_{1},c_{2}>0$, depending only $n$, such that
\begin{equation}\label{eqJNL}
\Big|\big\{x\in Q: |f(x)-(M_{Q}(f))_{Q}|>t|Q|^{\beta}\big\}\Big|\leq c_{1}e^{-c_{2}t}|Q|
\end{equation}
for all $Q\subset \mathbb{R}^n$ and all $t>0$.
\end{theorem}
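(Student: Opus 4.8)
The plan is to deduce \eqref{eqJNL} from two facts already at our disposal: the characterization of $\bar{\mathcal{L}}^{1,\lambda}(Q)$ as the nonnegative H\"older class (Theorem \ref{Cmain}) and the boundedness \eqref{MLIP} of $M_{Q}$ on $\bar{\mathcal{L}}^{1,\lambda}(Q)$. First I would record the structural consequences. By Theorem \ref{Cmain} the normalization $\|f\|_{\bar{\mathcal{L}}^{1,\lambda}(\mathbb{R}^n)}=1$ forces $f\geq 0$, and since $|f|=f$ Lebesgue differentiation gives $f(x)\leq M_{Q}(f)(x)$ for a.e. $x\in Q$. Writing $F:=M_{Q}(f)$, inequality \eqref{MLIP} yields $F\in\bar{\mathcal{L}}^{1,\lambda}(Q)$ with $\|F\|_{\bar{\mathcal{L}}^{1,\lambda}(Q)}\leq C$, so Theorem \ref{Cmain} applies again and $F$ is itself a nonnegative $C^{0,\beta}(Q)$ function with seminorm $\lesssim 1$.

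Next I would split the level set by the triangle inequality
$$|f(x)-F_{Q}|\leq \big(F(x)-f(x)\big)+|F(x)-F_{Q}|,$$
so that
$$\big\{x\in Q:|f(x)-F_{Q}|>t|Q|^{\beta}\big\}\subseteq \big\{x\in Q: F(x)-f(x)>\tfrac{t}{2}|Q|^{\beta}\big\}\cup \big\{x\in Q:|F(x)-F_{Q}|>\tfrac{t}{2}|Q|^{\beta}\big\}.$$
The second set is harmless: the H\"older continuity of $F$ gives $|F(x)-F_{Q}|\leq C|Q|^{\beta}$ for every $x\in Q$, so this set is empty once $t>2C$, and for $0<t\leq 2C$ the trivial bound $|Q|\leq c_{1}e^{-c_{2}t}|Q|$ (after enlarging $c_{1}$) already gives the claim. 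Thus everything reduces to the maximal-deviation estimate
$$\big|\{x\in Q: M_{Q}(f)(x)-f(x)>s|Q|^{\beta}\}\big|\leq c_{1}e^{-c_{2}s}|Q|.$$

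For this last estimate I would run the Calder\'on--Zygmund stopping-time iteration exactly in the spirit of Theorems \ref{JNcp} and \ref{JNM1}: restricting attention to $Q$, on which $f$ is bounded and hence lies in $BMO(Q)$ with $f^{-}\equiv 0\in L^{\infty}(Q)$, one decomposes $Q$ at the scale $|Q|^{\beta}$, controls the jump of the stopping constants between a cube and its parent by means of \eqref{MLIP} together with $f\leq M_{Q}f$, and shows that each generation of stopping cubes shrinks the level set by a fixed geometric factor; summing the resulting geometric series produces the exponential decay $e^{-c_{2}s}$. Alternatively, after the same localization one may invoke Theorem \ref{JNM1} directly, applied to $f|_{Q}$ and rescaled to the oscillation scale $|Q|^{\beta}$. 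I expect the only delicate point to be this maximal-deviation step: because $M_{Q}$ is nonlinear and nonlocal, one must check that on passing to a subcube $Q'\subset Q$ the localized maximal function and its deviation from $f$ transfer correctly to the parent cube, and that the geometric decay factor and the $|Q|^{\beta}$ normalization remain uniform across all generations. The two easy ingredients above then combine with this estimate to yield \eqref{eqJNL}.
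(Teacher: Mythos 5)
The paper offers no proof of Theorem \ref{JNL} at all --- it is stated with the remark that it follows ``similar to \cite{HD} and \cite{WZTmn}'' --- so your argument stands on its own merits. Your reduction is structurally sound and uses the right internal tools: Theorem \ref{Cmain} indeed gives $f\geq 0$ and $f\in C^{0,\beta}$ with constant $C\lesssim 1$, hence $f\leq M_{Q}(f)$ on $Q$, and the triangle-inequality splitting together with \eqref{MLIP} is valid. But you treat as the ``delicate core'' a step that is actually trivial, which is presumably why the paper dismisses the whole theorem as easy. Since $f$ is nonnegative and $\beta$-H\"older with constant $C\lesssim 1$, for every subcube $Q'\subseteq Q$ containing $x$ one has
\[
\frac{1}{|Q'|}\int_{Q'}f(y)\,dy-f(x)=\frac{1}{|Q'|}\int_{Q'}\big(f(y)-f(x)\big)\,dy\leq C(\mathrm{diam}\,Q)^{\beta},
\]
so $0\leq M_{Q}(f)(x)-f(x)\leq C|Q|^{\beta}$ \emph{pointwise} (in the paper's normalization), and therefore
\[
\big|f(x)-(M_{Q}f)_{Q}\big|\leq |f(x)-f_{Q}|+\big|\big(M_{Q}f-f\big)_{Q}\big|\leq C|Q|^{\beta}
\]
for every $x\in Q$. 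The set in \eqref{eqJNL} is thus empty for $t>C$, and for $0<t\leq C$ one simply takes $c_{1}=e^{Cc_{2}}$; no Calder\'on--Zygmund iteration, no John--Nirenberg input, and in fact no use of \eqref{MLIP} is needed.

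Beyond the inefficiency, one of your two proposed routes for the maximal-deviation estimate has a genuine obstruction as written: Theorem \ref{JNM1} cannot be invoked ``directly,'' because a function with $\|f\|_{\bar{\mathcal{L}}^{1,\lambda}(\mathbb{R}^n)}=1$ need not belong to $BMO(\mathbb{R}^n)$ at all --- for example $f(x)=|x|^{\beta}$ is nonnegative and $\beta$-H\"older, yet its mean oscillation over the cube $Q(0,\rho)$ grows like $\rho^{\beta}$. So the global hypothesis of Theorem \ref{JNM1} fails, and applying it to $f|_{Q}$ requires either a cube-localized version with $c_{2}$ quantified as inversely proportional to the local $BMO$ norm $\approx |Q|^{\beta}$, or a bounded H\"older extension of $f|_{Q}$ whose global $BMO$ norm remains $\lesssim |Q|^{\beta}$; neither is supplied by this paper or by the statement of Theorem \ref{JNM1}. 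Your stopping-time alternative would work, but it amounts to reproving \cite{HD} locally. Both detours are avoided by the pointwise bound above.
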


Then, we conclude that
\begin{theorem}\label{JNLIP}
Let $n<\lambda\leq n+1, \beta=\lambda-n$ and $\|f\|_{\bar{\mathcal{L}}^{1,\lambda}(\mathbb{R}^n)}=1$. There are constants $c_{1},c_{2}>0$, depending only $n$, such that
\begin{equation}\label{eqJNLIP}
\Big|\big\{x\in Q: |f(x)-M_{Q}(f)(x)|>t|Q|^{\beta}\big\}\Big|\leq c_{1}e^{-c_{2}t}|Q|
\end{equation}
for all $Q\subset \mathbb{R}^n$ and all $t>0$.
\end{theorem}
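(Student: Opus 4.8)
The plan is to deduce Theorem \ref{JNLIP} from Theorem \ref{JNL} by trading the constant $(M_Q(f))_Q$ for the pointwise value $M_Q(f)(x)$, the price being an error that is uniformly small on $Q$. The starting point is the triangle inequality
$$|f(x)-M_Q(f)(x)|\leq \big|f(x)-(M_Q(f))_Q\big|+\big|(M_Q(f))_Q-M_Q(f)(x)\big|,$$
so the whole matter reduces to controlling the oscillation $\big|M_Q(f)(x)-(M_Q(f))_Q\big|$ of the maximal function about its own mean.

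First I would show this oscillation is bounded, uniformly in $x\in Q$, by $C_0|Q|^{\beta}$ for a dimensional constant $C_0$. Since $\|f\|_{\bar{\mathcal{L}}^{1,\lambda}(\mathbb{R}^n)}=1$, the boundedness estimate \eqref{MLIP} gives $\|M_Q f\|_{\bar{\mathcal{L}}^{1,\lambda}(Q)}\leq C$; by the equivalence ${\rm (3)}\Leftrightarrow{\rm (1)}$ of Theorem \ref{Cmain} (with $p=1$ and exponent $\beta=\lambda-n$) this forces $M_Q f$ to be nonnegative and H\"{o}lder continuous on $Q$ with seminorm bounded by a dimensional constant. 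Writing the mean as an average and using H\"{o}lder continuity,
$$\big|M_Q f(x)-(M_Q f)_Q\big|\leq\frac{1}{|Q|}\int_Q|M_Q f(x)-M_Q f(y)|\,dy\leq \frac{C}{|Q|}\int_Q|x-y|^{\beta}\,dy\leq C_0|Q|^{\beta},$$
where in the last step I bound $|x-y|\leq\operatorname{diam} Q$ and use that $(\operatorname{diam} Q)^{\beta}$ is comparable to $|Q|^{\beta}$ in the normalization of Theorems \ref{JNL}--\ref{JNLIP}.

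With this uniform bound the conclusion follows by a two-regime argument. For $t>2C_0$ the error term never reaches $\tfrac{t}{2}|Q|^{\beta}$, so
$$\big\{x\in Q:|f(x)-M_Q f(x)|>t|Q|^{\beta}\big\}\subset\big\{x\in Q:|f(x)-(M_Q f)_Q|>\tfrac{t}{2}|Q|^{\beta}\big\},$$
and Theorem \ref{JNL} applied with threshold $t/2$ bounds the right-hand set by $c_1 e^{-c_2 t/2}|Q|$. For $0<t\leq 2C_0$ I would invoke the trivial estimate $|\{\cdots\}|\leq|Q|\leq e^{2C_0 c_2}\,e^{-c_2 t}|Q|$; enlarging $c_1$ to absorb $e^{2C_0 c_2}$ and relabeling $c_2/2$ as $c_2$ then covers every $t>0$. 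The main obstacle is the middle step: one must be certain that \eqref{MLIP} produces $M_Q f\in\bar{\mathcal{L}}^{1,\lambda}(Q)$ with norm independent of $Q$ and that Theorem \ref{Cmain} upgrades the resulting averaged oscillation to genuine pointwise H\"{o}lder continuity, since it is precisely this pointwise regularity that lets one pass from the mean $(M_Q f)_Q$ to the values $M_Q f(x)$ with the correct power of $|Q|$.
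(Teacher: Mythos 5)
Your proposal is correct and takes essentially the same route as the paper: the paper likewise splits $|f-M_{Q}(f)|$ through the constant $(M_{Q}(f))_{Q}$ via the triangle inequality, handles the first piece with Theorem \ref{JNL}, and controls the oscillation of $M_{Q}(f)$ by invoking \eqref{MLIP} together with the fact that $\|M_{Q}(f)\|_{\bar{\mathcal{L}}^{1,\lambda}(Q)}\leq C\|f\|_{\bar{\mathcal{L}}^{1,\lambda}(Q)}$ forces that oscillation to be uniformly of size $|Q|^{\beta}$. Your version merely makes explicit what the paper leaves implicit (the pointwise H\"{o}lder bound coming from Theorem \ref{Cmain}) and is more careful with the $t/2$ thresholds and the two-regime treatment of small $t$, which tightens the paper's somewhat informal set-inclusion step.
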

\begin{proof}
By Theorem \ref{MLIP} and Proposition \ref{JNL}, we have
\begin{eqnarray*}
&&\frac{1}{|Q|}e^{c_{2}t}\Big|\big\{x\in Q: |f(x)-M_{Q}(f)(x)|>t|Q|^{\beta}\big\}\Big|\\
&&\leq \frac{1}{|Q|}e^{c_{2}t}\Big|\big\{x\in Q: |f(x)-(M_{Q}(f))_{Q}|>t|Q|^{\beta}\big\}\Big|\\
&&\qquad+\frac{1}{|Q|}e^{c_{2}t}\Big|\big\{x\in Q: |(M_{Q}(f))_{Q}-M_{Q}(f)(x)|>t|Q|^{\beta}\big\}\Big|\\
&&\leq C+C\|M_{Q}(f)\|_{\bar{\mathcal{L}}^{1,\lambda}(Q)}\\
&&\leq C+C\|f\|_{\bar{\mathcal{L}}^{1,\lambda}(Q)}\\
&&\leq C.
\end{eqnarray*}
There exist constants $c_{1}$ and $c_{2}$ such that
$$\Big|\big\{x\in Q: |f(x)-M_{Q}(f)(x)|>t|Q|^\beta\big\}\Big|\leq c_{1}e^{-c_{2}t}|Q|.$$
Thus we complete the proof of Theorem \ref{JNLIP}.
\end{proof}

Furthermore, we have
\begin{theorem}\label{JNMalpha2}
Suppose $f\in C^{0,\beta}(\mathbb{R}^n)$ with $f\geq 0$ and $0<\beta\leq 1$, then for any cube $Q$ and $t>0$, we have
$$\Big|\big\{x\in Q: |f(x)-|Q|^{-\beta/n}M_{\alpha,Q}(f)(x)|>t|Q|^{\beta}\big\}\Big|\leq c_{1}e^{-c_{2}t}|Q|.$$
where $c_{1}$ and $c_{2}$ are positive constants.
\end{theorem}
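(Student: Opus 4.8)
The plan is to transcribe the proof of Theorem \ref{JNMalpha} verbatim, replacing the $BMO$ John--Nirenberg machinery by its H\"older counterpart supplied in this section. First I would invoke Theorem \ref{Cmain}: a nonnegative $f\in C^{0,\beta}(\mathbb{R}^n)$ is precisely an element of $\bar{\mathcal{L}}^{1,\lambda}(\mathbb{R}^n)$ with $\lambda=n+\beta\in(n,n+1]$, so that Theorems \ref{JNL} and \ref{JNLIP} are at my disposal; after a harmless rescaling I may assume $\|f\|_{\bar{\mathcal{L}}^{1,\lambda}(\mathbb{R}^n)}=1$. Fixing a cube $Q$, the objects I must compare are $f(x)$ and $g(x):=|Q|^{-\beta/n}M_{\alpha,Q}(f)(x)$.

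Next I would record the pointwise sandwich that powers the argument: for every $x\in Q$,
\begin{equation*}
|f|_{Q}\leq |Q|^{-\beta/n}M_{\alpha,Q}(f)(x)\leq M_{Q}(f)(x),
\end{equation*}
obtained exactly as in Theorem \ref{JNMalpha} by testing the fractional maximal operator at $Q'=Q$ for the left inequality and by the monotonicity $|Q'|\leq|Q|$ for the right one. Splitting $Q$ into $E=\{x\in Q:f(x)\geq|f|_{Q}\}$ and $F=Q\setminus E$, the sandwich lets me decompose the level set cleanly. On the set where $f(x)-g(x)>t|Q|^{\beta}$ one must have $x\in E$ (on $F$ we have $f(x)<|f|_{Q}\leq g(x)$), and there $g(x)\geq|f|_{Q}$ forces $f(x)-g(x)\leq f(x)-|f|_{Q}$; on the set where $g(x)-f(x)>t|Q|^{\beta}$ one has $g(x)>f(x)$, and there $g(x)\leq M_{Q}(f)(x)$ forces $g(x)-f(x)\leq M_{Q}(f)(x)-f(x)$. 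Hence
\begin{equation*}
\Big|\big\{x\in Q:|f(x)-g(x)|>t|Q|^{\beta}\big\}\Big|\leq \Big|\big\{x\in Q:f(x)-|f|_{Q}>t|Q|^{\beta}\big\}\Big|+\Big|\big\{x\in Q:M_{Q}(f)(x)-f(x)>t|Q|^{\beta}\big\}\Big|.
\end{equation*}

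The second term is dominated by $\{x\in Q:|f(x)-M_{Q}(f)(x)|>t|Q|^{\beta}\}$ and is therefore bounded by $c_{1}e^{-c_{2}t}|Q|$ directly through Theorem \ref{JNLIP}. For the first term I would exploit that $f$ is genuinely H\"older continuous and nonnegative, so $|f|_{Q}=f_{Q}$ and, for every $x\in Q$, a pointwise estimate gives $|f(x)-|f|_{Q}|=|f(x)-f_{Q}|\leq \frac{1}{|Q|}\int_{Q}|f(x)-f(y)|\,dy\leq C(\mathrm{diam}\,Q)^{\beta}$. Comparing this intrinsic oscillation with the normalizing factor shows the inner set is empty once $t$ exceeds a fixed multiple of $\|f\|$, while for the bounded remaining range of $t$ the trivial estimate $|Q|\leq c_{1}e^{-c_{2}t}|Q|$ absorbs it. Adding the two contributions and adjusting $c_{1},c_{2}$ completes the proof.

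I expect the one genuine obstacle to be the bookkeeping of homogeneities: one must verify that the scale $|Q|^{\beta}$ appearing in the statement is the same scale produced by Theorem \ref{JNLIP} and by the H\"older bound $(\mathrm{diam}\,Q)^{\beta}$, so that the exponential estimate is uniform in $Q$ and not merely in $t$ for each fixed $Q$; matching these exponents (and, if one prefers to route the inner term through Theorem \ref{JNL} rather than the raw H\"older estimate, controlling $(M_{Q}f)_{Q}-|f|_{Q}$ via the boundedness result of this section) is the only point requiring care. Everything else is a faithful copy of the reasoning for Theorem \ref{JNMalpha}, with $BMO$ replaced throughout by $\bar{\mathcal{L}}^{1,n+\beta}$.
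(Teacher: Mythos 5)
Your proposal is correct and is essentially the proof the paper intends (the paper states Theorem \ref{JNMalpha2} with no proof at all, expecting exactly this adaptation of Theorem \ref{JNMalpha}): the pointwise sandwich $|f|_{Q}\leq |Q|^{-\alpha/n}M_{\alpha,Q}(f)(x)\leq M_{Q}(f)(x)$, the split of the level set into the parts where $f$ exceeds $|f|_{Q}$ and where the maximal function exceeds $f$, and an appeal to Theorem \ref{JNLIP} for the maximal-function half. Your two minor deviations are harmless or even improvements: bounding the $f-|f|_{Q}$ half by the pointwise H\"{o}lder estimate (so that set is empty for large $t$) is cleaner than routing it through a John--Nirenberg bound, and the homogeneity mismatch you flag ($|Q|^{\beta}$ versus $(\mathrm{diam}\,Q)^{\beta}$, and $\beta/n$ versus $\alpha/n$ in the prefactor) is a notational inconsistency of the paper itself, carried over verbatim from Theorems \ref{JNL} and \ref{JNLIP}, not a gap in your argument.
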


\section{Necessary conditions for commutators on Banach function spaces}\label{BANACH}

To state our results we recall some basic facts about Muckenhoupt weights and ball Banach function spaces.

We first recall the definition of $A_{p}$ weight introduced by Muckenhoupt in \cite{M1972}, which give the characterization of all weights $\omega(x)$ such
that the Hardy-Littlewood maximal operator is bounded on $L^{p}(\omega)$. For $1< p<\infty$ and a nonnegative locally integrable function $\omega$ on $\mathbb{R}^n$, $\omega$ is in the
Muckenhoupt $A_{p}$ class if it satisfies the condition
$$[\omega]_{A_{p}}:=\sup_{Q}\bigg(\frac{1}{|Q|}\int_{Q}\omega(x)dx\bigg)\bigg(\frac{1}{|Q|}\int_{Q}\omega(x)^{-\frac{1}{p-1}}dx\bigg)^{p-1}<\infty.$$
And a weight function $\omega$ belongs to the class $A_{1}$ if
$$[\omega]_{A_{1}}:=\frac{1}{|Q|}\int_{Q}\omega(x)dx\Big(\mathop\mathrm{ess~sup}_{x\in Q}\omega(x)^{-1}\Big)<\infty.$$
We write $A_{\infty}=\bigcup_{1\leq p<\infty}A_{p}$. In fact, the reverse H\"{o}lder inequality holds for $A_{p}$, that is, there exist constants $q>1$ and $C$ such that for any cube $Q$ and $\omega\in A_{p}$,
\begin{equation}\label{rhi}
\bigg(\frac{1}{|Q|}\int_{Q}\omega(x)^qdx\bigg)^{1/q}\leq \frac{1}{|Q|}\int_{Q}\omega(x)dx.
\end{equation}
For $\omega\in A_{\infty}$, there exists $0<\epsilon,L<\infty$ such that for all measurable subsets $S$ of cube $Q$,
\begin{align}\label{weight-eq1}
  \frac{\omega(S)}{\omega(Q)}\leq C\Big(\frac{|S|}{|Q|}\Big)^{\epsilon}
\end{align}
and
\begin{align}\label{weight-eq2}
\left( \frac{|S|}{|Q|}\right)^{L}\le C\frac{\omega(S)}{\omega(Q)}.
\end{align}

By a ball Banach function space $X$ whose norm $\|\cdot\|_{X}$ satisfies the following for all $f,g\in X$:

\vspace{0.2cm}
(1) $\|f\|_{X}=\||f|\|_{X}$;

\vspace{0.2cm}
(2) if $|f|\leq |g|$ a.e., then $\|f\|_{X}\leq \|g\|_{X}$;

\vspace{0.2cm}
(3) if $\{f_{n}\}\subset X$ is a sequence such that $|f_{n}|$ increases to $|f|$ a.e., then $\|f_{n}\|_{X}$ increases to $\|f\|_{X}$;

\vspace{0.2cm}
(4) if $E\subset \mathbb{R}^n$ is bounded, then $\|\chi_{E}\|_{X}<\infty$;

\vspace{0.2cm}
(5) if $E$ is bounded, then $\int_{E}|f(x)|d\mu\leq C\|f\|_{X},$ where $C=C(E,X)$.
\vspace{0.2cm}

Given a ball Banach function space $X$, there exists another ball Banach function space $X'$, called the associate space of $X$, such that for all $f\in X$,
$$\|f\|_{X}\approx \sup_{g\in X', \|g\|_{X'}\leq 1}\int_{\mathbb{R}^n}f(x)g(x)dx.$$
The associate space is equal to the dual space $X^*$ and always reflexive in many cases. Moreover, we have
\begin{equation}\label{Ref}
\int_{\mathbb{R}^n}|f(x)g(x)|dx\lesssim \|f\|_{X}\|g\|_{X'}.
\end{equation}

Let $X$ be a Banach function and define
$$\|f\|_{BMO_{X}}:=\sup_{Q}\frac{\|(b-b_{Q})\chi_{Q}\|_{X}}{\|\chi_{Q}\|_{X}}.$$
When the Hardy-Littlewood maximal operator $M$ is bounded on $X$, Ho \cite{H2012} first proved that $\|f\|_{BMO_{X}}$ is equivalent to $BMO(\mathbb{R}^n)$. Izuki and Sawano \cite{IS2017} gave another proof using the Rubio de Francia algorithm.

One can show that Lebesgue spaces, Morrey spaces, Lorentz spaces, variable Lebesgue spaces, weighted Lebesgue spaces and Orlicz
spaces are Banach function spaces. In this section, Our results relax the restriction of Banach spaces in previous to quasi-Banach spaces and extend $BMO(\mathbb{R}^n)$ to the class of $\mathcal{\bar{L}}^{1,n}(\mathbb{R}^n)$.

\begin{theorem}\label{ThmB}
Let $0<s<\infty$ and $X$ be a ball Banach function space such that the Hardy-Littlewood maximal operator $M$ is bounded on the associate space $X'$. Then $f\in \mathcal{\bar{L}}^{1,n}(\mathbb{R}^n)$ if and only if
$$\sup_{Q}\frac{\|(f-|f|_{Q})\chi_{Q}\|_{X^s}}{\|\chi_{Q}\|_{X^s}}<\infty,$$
where $X^{s}:=\{f: \|f\|_{X^s}:=\||f|^{s}\|^{1/s}_{X}<\infty\}.$
\end{theorem}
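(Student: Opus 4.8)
The plan is to reduce everything to the scale-$s$ version of the problem via the elementary identities
$$\|\chi_Q\|_{X^s}=\big\||\chi_Q|^s\big\|_X^{1/s}=\|\chi_Q\|_X^{1/s}\qquad\text{and}\qquad\big\|(f-|f|_Q)\chi_Q\big\|_{X^s}^{s}=\big\||f-|f|_Q|^{s}\chi_Q\big\|_X,$$
so the quantity in the statement is just $\big\||f-|f|_Q|^{s}\chi_Q\big\|_X^{1/s}\big/\|\chi_Q\|_X^{1/s}$ and I never have to invoke the (merely quasi-Banach, when $s<1$) structure of $X^{s}$. Since $\mathcal{\bar{L}}^{s,n}(\mathbb{R}^n)=\mathcal{\bar{L}}^{1,n}(\mathbb{R}^n)$ for every $0<s<\infty$ by Theorems \ref{p<1} and \ref{p>1}, it then suffices to compare $\big\||f-|f|_Q|^{s}\chi_Q\big\|_X$ with the average $\int_Q|f-|f|_Q|^{s}\,dx$. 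The bridge between them is the two-sided estimate $\|\chi_Q\|_X\|\chi_Q\|_{X'}\approx|Q|$: the lower bound is immediate from \eqref{Ref} applied to $\chi_Q$, while for the upper bound I would use the hypothesis that $M$ is bounded on $X'$. Writing $\|\chi_Q\|_X\approx\sup\{\int_Q g:\|g\|_{X'}\le1,\ g\ge0\}$ and noting $g_Q\le Mg(x)$ for every $x\in Q$, one has $g_Q\chi_Q\le Mg$ pointwise, hence $g_Q\|\chi_Q\|_{X'}\le\|Mg\|_{X'}\lesssim\|g\|_{X'}\le1$; multiplying by $|Q|$ and taking the supremum over $g$ gives $\|\chi_Q\|_X\|\chi_Q\|_{X'}\lesssim|Q|$.

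For the necessity direction ($f\in\mathcal{\bar{L}}^{1,n}\Rightarrow$ the supremum is finite) I would first upgrade the John--Nirenberg inequality of Theorem \ref{JN} to a weighted form. If $w\in A_\infty$, combining the distributional bound $|\{x\in Q:|f(x)-|f|_Q|>t\}|\le c_1e^{-c_2t/\|f\|_{\mathcal{\bar{L}}^{1,n}}}|Q|$ with \eqref{weight-eq1} gives $w(\{x\in Q:|f-|f|_Q|>t\})\lesssim e^{-c_2\varepsilon t/\|f\|_{\mathcal{\bar{L}}^{1,n}}}w(Q)$, and integrating $\int_0^\infty s t^{s-1}(\cdot)\,dt$ produces
$$\frac{1}{w(Q)}\int_Q|f(x)-|f|_Q|^{s}\,w(x)\,dx\lesssim\|f\|_{\mathcal{\bar{L}}^{1,n}}^{s}.$$
Then, given $g\in X'$ with $\|g\|_{X'}\le1$, I would run the Rubio de Francia algorithm with respect to $M$ on $X'$ to obtain $Rg$ with $|g|\le Rg$, $\|Rg\|_{X'}\le2$, and $Rg\in A_1$ with constant controlled by $\|M\|_{X'\to X'}$ (so the $A_\infty$ exponent above is uniform). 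The weighted bound applied to $w=Rg$ yields $\int_Q|f-|f|_Q|^{s}|g|\le\int_Q|f-|f|_Q|^{s}Rg\lesssim\|f\|_{\mathcal{\bar{L}}^{1,n}}^{s}\int_Q Rg\le2\|f\|_{\mathcal{\bar{L}}^{1,n}}^{s}\|\chi_Q\|_X$, using $\int_Q Rg\le\|Rg\|_{X'}\|\chi_Q\|_X$. Taking the supremum over such $g$ through the associate-space representation \eqref{Ref} gives $\big\||f-|f|_Q|^{s}\chi_Q\big\|_X\lesssim\|f\|_{\mathcal{\bar{L}}^{1,n}}^{s}\|\chi_Q\|_X$, so the supremum in the statement is at most $C\|f\|_{\mathcal{\bar{L}}^{1,n}}$.

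For the sufficiency direction I would argue purely by duality. By \eqref{Ref} and the hypothesis,
$$\int_Q|f(x)-|f|_Q|^{s}\,dx\le\big\||f-|f|_Q|^{s}\chi_Q\big\|_X\,\|\chi_Q\|_{X'}\lesssim\|\chi_Q\|_X\|\chi_Q\|_{X'}\lesssim|Q|,$$
where the final step is the product bound established in the first paragraph. Dividing by $|Q|$ shows $f\in\mathcal{\bar{L}}^{s,n}(\mathbb{R}^n)=\mathcal{\bar{L}}^{1,n}(\mathbb{R}^n)$, which is the claim. I expect the main obstacle to be the two technical inputs that make the Banach-space machinery interact with $\mathcal{\bar{L}}^{1,n}$: establishing $\|\chi_Q\|_X\|\chi_Q\|_{X'}\approx|Q|$ under the sole assumption that $M$ is bounded on $X'$ (this is exactly where that hypothesis is used, and it genuinely fails for general ball Banach function spaces), and the weighted John--Nirenberg estimate paired with the Rubio de Francia construction that lets one test against an arbitrary $g\in X'$. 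Working with $|f-|f|_Q|^{s}$ rather than $f-|f|_Q$ throughout is precisely what allows the single argument to cover all $0<s<\infty$ at once, including the quasi-Banach regime $s<1$.
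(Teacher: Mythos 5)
Your proposal is correct, and its overall skeleton matches the paper's proof: both reduce the $X^s$ quantity to $\||f-|f|_Q|^s\chi_Q\|_X/\|\chi_Q\|_X$, both prove sufficiency by the duality inequality \eqref{Ref} together with the product bound $\|\chi_Q\|_X\|\chi_Q\|_{X'}\lesssim|Q|$ and the norm equivalences of Theorems \ref{p<1} and \ref{p>1}, and both prove necessity by testing against $g\in X'$ and running the Rubio de Francia algorithm \eqref{thmR} to replace $g$ by an $A_1$ majorant $Rg$ with uniformly controlled constant. The one genuine divergence is how the weight $Rg$ is then handled. The paper uses the reverse H\"{o}lder inequality \eqref{rhi} for $A_1$ weights to get $\|Rg\chi_Q\|_{L^q}\lesssim|Q|^{1/q-1}\|\chi_Q\|_X$, splits $\int_Q|f-|f|_Q|^sRg$ by H\"{o}lder with exponents $(q',q)$, and then invokes the John--Nirenberg-based equivalence of $\mathcal{\bar{L}}^{p,n}$ norms at the exponent $p=sq'$. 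You instead keep the weight attached: you combine the distributional John--Nirenberg inequality of Theorem \ref{JN} with the $A_\infty$ absolute-continuity property \eqref{weight-eq1} (with constants uniform in $g$ because $[Rg]_{A_1}\le 2B$) to get a weighted oscillation estimate $\int_Q|f-|f|_Q|^sRg\lesssim\|f\|^s_{\mathcal{\bar{L}}^{1,n}}\int_QRg$, and then bound $\int_QRg\lesssim\|\chi_Q\|_X$ by \eqref{Ref}. Both routes use only tools stated in the paper; yours dispenses with the reverse H\"{o}lder/dual-exponent bookkeeping at the cost of proving the weighted John--Nirenberg estimate, and it treats all $0<s<\infty$ in one stroke without passing through the auxiliary exponent $sq'$. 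A further small credit: you actually prove the product bound $\|\chi_Q\|_X\|\chi_Q\|_{X'}\lesssim|Q|$ from the boundedness of $M$ on $X'$ (via $g_Q\chi_Q\le Mg$), whereas the paper asserts it without argument.
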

\begin{proof}
($\Rightarrow$). Let $B=\|M\|_{X'\rightarrow X'}$. Take $g\in X'$ with $\|g\|_{X'}=1$ and define a function
\begin{equation}\label{thmR}
R(g)(x):=\sum_{k=0}^{\infty}\frac{M^{k}g(x)}{(2B)^{k}}, g\in X',
\end{equation}
where $M^{k}(g)=M\circ(M^{k-1}g)$ and $M^{0}g=|g|$. Then, the function $Rg$ satisfies the following properties:

(1) $|g(x)|\leq Rg(x)$ for any  $x\in \mathbb{R}^n$;

(2) $\|Rg\|_{X'}\leq 2\|g\|_{X'}\leq 2$;

(3) $M(Rg)(x)\leq 2B Rg(x)$, that is, $Rg$ is a Muckenhoupt $A_{1}$ weight.

By the property of Muckenhoupt $A_{1}$, we know that there exist positive $q>1$ such that for any cubes $Q$,
$$\bigg(\frac{1}{|Q|}\int_{Q}Rg(x)^{q}dx\bigg)^{1/q}\leq \frac{C}{|Q|}\int_{Q}Rg(x)dx.$$
By the generalized Holder inequality \eqref{Ref}, we obtain
\begin{eqnarray*}
\begin{aligned}
\|Rg\chi_{Q}\|_{L^{q}(\mathbb{R}^n)}&=\bigg(\int_{Q}Rg(x)^{q}dx\bigg)^{1/q}\leq |Q|^{1/q-1}\int_{Q}Rg(x)dx\\
&\leq C|Q|^{1/q-1}\|Rg\|_{X'}\|\chi_{Q}\|_{X}\leq C|Q|^{1/q-1}\|\chi_{Q}\|_{X}.
\end{aligned}
\end{eqnarray*}
Thus we have
\begin{eqnarray*}
\begin{aligned}
\|(f-|f|_{Q})^s\chi_{Q}\|_{X}&\leq C\sup\bigg\{\Big|\int_{Q}(f(x)-|f|_{Q})^{s}g(x)dx\Big|:g\in X',\|g\|_{X'}\leq 1\bigg\}\\
&\leq C\sup\bigg\{\int_{Q}|f(x)-|f|_{Q}|^{s}Rg(x)dx:g\in X',\|g\|_{X'}\leq 1\bigg\}\\
&\leq C\sup\bigg\{\|(f-|f|_{Q})^s\chi_{Q}\|_{L^{q'}(\mathbb{R}^n)}\|Rg\chi_{Q}\|_{L^{q}(\mathbb{R}^n)}:g\in X',\|g\|_{X'}\leq 1\bigg\}\\
&\leq C\bigg(\frac{1}{|Q|}\int_{Q}|f-|f|_{Q}|^{sq'}dx\bigg)^{1/q'}\|\chi_{Q}\|_{X}.
\end{aligned}
\end{eqnarray*}
This yields that
$$\frac{\|(f-|f|_{Q})^s\chi_{Q}\|^{1/s}_{X}}{\|\chi_{Q}\|^{1/s}_{X}}\leq  C\bigg(\frac{1}{|Q|}\int_{Q}|f-|f|_{Q}|^{sq'}dx\bigg)^{1/(sq')}\leq C\|f\|_{\mathcal{\bar{L}}^{1,n}(\mathbb{R}^n)}.$$

$(\Leftarrow)$. For any cube $Q$,
\begin{eqnarray*}
\begin{aligned}
\int_{Q}|f(x)-|f|_{Q}|^{s}dx\leq C\|(f-|f|_{Q})^s\chi_{Q}\|_{X}\|\chi_{Q}\|_{X'}.
\end{aligned}
\end{eqnarray*}
The boundedness of $M$ on $X'$ gives us that
$$\|\chi_{Q}\|_{X}\|\chi_{Q}\|_{X'}\leq C|Q|,$$
it follows that $f\in \mathcal{\bar{L}}^{1,n}(\mathbb{R}^n)$ by Propositions \ref{p<1} and \ref{p>1}.
\end{proof}

Unfortunately, if $0<s<1$, we do not know whether or not the condition $f\in \mathcal{\bar{L}}^{1,n}(\mathbb{R}^n)$ is necessary for
\begin{eqnarray*}
\begin{aligned}
\sup_{Q}\int_{Q}|f(x)-|f|_{Q}|^{s}dx\leq C.
\end{aligned}
\end{eqnarray*}
Then, we only obtain partly results about characterizations of $\mathcal{\bar{L}}^{1,\lambda}(\mathbb{R}^n)$ function associated to maximal functions on ball Banach function space.

\begin{theorem}\label{W}
Let $1\leq s<\infty$ and $X$ be a ball Banach function space such that the Hardy-Littlewood maximal operator $M$ is bounded on the associate space $X'$. Then $f\in \mathcal{\bar{L}}^{1,n}(\mathbb{R}^n)$ if and only if
$$\sup_{Q}\frac{\|(f-M_{Q}f)\chi_{Q}\|_{X^s}}{\|\chi_{Q}\|_{X^s}}<\infty.$$
\end{theorem}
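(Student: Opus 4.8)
The plan is to follow the strategy of Theorem~\ref{ThmB}, replacing the average $|f|_{Q}$ by the local maximal function $M_{Q}f$: the necessity direction will rest on the Rubio de Francia device $Rg$ together with the John--Nirenberg inequality of Theorem~\ref{JNM1}, while the sufficiency direction will combine the generalized H\"older inequality \eqref{Ref} with the pointwise comparison $M_{Q}f\ge|f|_{Q}$ and a Bastero--Milman--Ruiz type splitting of $Q$. The hypothesis $s\ge1$ will be used only at one point in the sufficiency argument.

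For the necessity ($\Rightarrow$), I would first invoke Theorem~\ref{def} so that $f\in BMO(\mathbb{R}^n)$ with $f^{-}\in L^{\infty}(\mathbb{R}^n)$, which is exactly the hypothesis of Theorem~\ref{JNM1}. Fix a cube $Q$ and, for $g\in X'$ with $\|g\|_{X'}\le1$, form the function $Rg$ as in \eqref{thmR}; then $Rg$ is a Muckenhoupt $A_{1}$ weight satisfying the reverse H\"older inequality \eqref{rhi} for some $q>1$, and $\|Rg\chi_{Q}\|_{L^{q}(\mathbb{R}^n)}\le C|Q|^{1/q-1}\|\chi_{Q}\|_{X}$ exactly as in the proof of Theorem~\ref{ThmB}. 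Writing $\|(f-M_{Q}f)\chi_{Q}\|_{X^{s}}^{s}=\||f-M_{Q}f|^{s}\chi_{Q}\|_{X}$ and dualizing against $Rg$ via \eqref{Ref}, H\"older's inequality with exponents $q,q'$ reduces the estimate to controlling
$$\Big(\frac{1}{|Q|}\int_{Q}|f(x)-M_{Q}f(x)|^{sq'}dx\Big)^{1/(sq')}.$$
This is where Theorem~\ref{JNM1} enters: the exponential decay of $|\{x\in Q:|f-M_{Q}f|>t\}|$ makes every $L^{sq'}$ average over $Q$ bounded by a multiple of $\|f\|_{\mathcal{\bar{L}}^{1,n}(\mathbb{R}^n)}$, and taking $s$-th roots gives the desired uniform bound on the supremum.

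For the sufficiency ($\Leftarrow$), I would first convert the hypothesis into a classical integral bound: by \eqref{Ref} and the assumed supremum,
$$\int_{Q}|f-M_{Q}f|^{s}dx\le C\,\|(f-M_{Q}f)\chi_{Q}\|_{X^{s}}^{s}\,\|\chi_{Q}\|_{X'}\le C\|\chi_{Q}\|_{X}\|\chi_{Q}\|_{X'},$$
and the boundedness of $M$ on $X'$ forces $\|\chi_{Q}\|_{X}\|\chi_{Q}\|_{X'}\le C|Q|$, so $\frac{1}{|Q|}\int_{Q}|f-M_{Q}f|^{s}dx\le C$. Since $s\ge1$, Jensen's inequality upgrades this to $\frac{1}{|Q|}\int_{Q}|f-M_{Q}f|\,dx\le C$. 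To conclude $f\in\mathcal{\bar{L}}^{1,n}(\mathbb{R}^n)$ I would use the elementary pointwise fact that $M_{Q}f(x)\ge|f|_{Q}$ for all $x\in Q$ (the cube $Q$ itself being admissible in the supremum defining $M_{Q}f$). Splitting $Q=E\cup F$ with $E=\{x\in Q: f(x)\ge|f|_{Q}\}$, on $F$ one has $0\le|f|_{Q}-f(x)\le M_{Q}f(x)-f(x)=|f(x)-M_{Q}f(x)|$, and the identity $f_{Q}-|f|_{Q}=-2(f^{-})_{Q}\le0$ yields
$$\int_{Q}\big|f-|f|_{Q}\big|dx=-2\int_{Q}f^{-}dx+2\int_{F}(|f|_{Q}-f)dx\le 2\int_{Q}|f-M_{Q}f|\,dx\le C|Q|,$$
which is precisely $f\in\mathcal{\bar{L}}^{1,n}(\mathbb{R}^n)$ by Theorem~\ref{def}.

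The main obstacle is the necessity direction, and within it the uniform $L^{sq'}$ control of $f-M_{Q}f$: this is not elementary and is exactly what Theorem~\ref{JNM1} supplies, so the crux is to apply that John--Nirenberg inequality with the correct scaling in $\|f\|_{\mathcal{\bar{L}}^{1,n}(\mathbb{R}^n)}$. I also expect the hypothesis $s\ge1$ to be essential only in the sufficiency direction, where Jensen's inequality is used to descend from the $L^{s}$ average to the $L^{1}$ average; this mirrors the failure, noted in the remark following Theorem~\ref{ThmB}, to treat the range $0<s<1$, and explains the restricted range assumed here.
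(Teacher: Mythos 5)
Your proposal is correct, and it is essentially the paper's own (implicit) argument: the paper states Theorem \ref{W} without proof, its intended justification being exactly the assembly you give — the Rubio de Francia duality scheme of Theorem \ref{ThmB} with $M_{Q}f$ in place of $|f|_{Q}$, Theorem \ref{def} plus the John--Nirenberg inequality of Theorem \ref{JNM1} supplying the uniform $L^{sq'}$ control for necessity, and the bound $\|\chi_{Q}\|_{X}\|\chi_{Q}\|_{X'}\leq C|Q|$ together with the Bastero--Milman--Ruiz splitting for sufficiency. Your identity $\int_{Q}\big|f-|f|_{Q}\big|dx=-2\int_{Q}f^{-}dx+2\int_{F}\big(|f|_{Q}-f\big)dx$ combined with the pointwise facts $M_{Q}f\geq|f|_{Q}$ and $M_{Q}f\geq f$ a.e.\ is exactly the missing link the paper leaves unsaid, and your observation that $s\geq 1$ is needed only for the Jensen step in sufficiency is consistent with the paper's remark about the unresolved range $0<s<1$.
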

\begin{theorem}\label{D}
Let $1\leq s<\infty$ and $X$ be a ball Banach function space such that the Hardy-Littlewood maximal operator $M$ is bounded on the associate space $X'$. Then $f\in \mathcal{\bar{L}}^{1,n}(\mathbb{R}^n)$ if and only if
$$\sup_{Q}\frac{\|(f-|Q|^{-\alpha/n}M_{\alpha,Q}(f))\chi_{Q}\|_{X^s}}{\|\chi_{Q}\|_{X^s}}<\infty.$$
\end{theorem}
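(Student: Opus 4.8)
The plan is to follow the scheme of Theorems \ref{ThmB} and \ref{W}, replacing the averaged modulus $|f|_Q$ by the fractional expression $g_Q(x):=|Q|^{-\alpha/n}M_{\alpha,Q}(f)(x)$, and to feed in the John--Nirenberg inequality for the fractional maximal function (Theorem \ref{JNMalpha}) in place of the ordinary one. Throughout I would use the pointwise bound
$$|f|_Q\leq |Q|^{-\alpha/n}M_{\alpha,Q}(f)(x)=g_Q(x),\qquad x\in Q,$$
already recorded in the proof of Theorem \ref{JNMalpha}, together with two facts from Section \ref{BANACH}: the Rubio de Francia construction $Rg$ of \eqref{thmR} producing an $A_1$ weight with $\|Rg\|_{X'}\leq 2\|g\|_{X'}$, and the estimate $\|\chi_Q\|_X\|\chi_Q\|_{X'}\leq C|Q|$ coming from the boundedness of $M$ on $X'$.

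For necessity ($\Rightarrow$), assume $f\in\mathcal{\bar{L}}^{1,n}(\mathbb{R}^n)$. Fixing $g\in X'$ with $\|g\|_{X'}\le 1$ and forming $Rg$, the reverse H\"older inequality for the $A_1$ weight $Rg$ gives, for some $q>1$, the bound $\|Rg\,\chi_Q\|_{L^q}\leq C|Q|^{1/q-1}\|\chi_Q\|_X$, exactly as in the proof of Theorem \ref{ThmB}. Using the duality formula for $X$, the generalized H\"older inequality \eqref{Ref}, and this estimate, I would obtain
$$\frac{\|(f-g_Q)\chi_Q\|_{X^s}}{\|\chi_Q\|_{X^s}}\leq C\Big(\frac{1}{|Q|}\int_Q|f(x)-g_Q(x)|^{sq'}\,dx\Big)^{1/(sq')}.$$
The right-hand side is then controlled by integrating the tail estimate of Theorem \ref{JNMalpha}: writing $m=sq'$ and tracking the dependence of the constants so that $a_2$ is comparable to $1/\|f\|_{\mathcal{\bar{L}}^{1,n}}$,
$$\frac{1}{|Q|}\int_Q|f-g_Q|^{m}\,dx=\frac{m}{|Q|}\int_0^\infty t^{m-1}\big|\{x\in Q:|f-g_Q|>t\}\big|\,dt\leq a_1 m\int_0^\infty t^{m-1}e^{-a_2t}\,dt<\infty,$$
which yields the desired uniform supremum bound.

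For sufficiency ($\Leftarrow$), assume the supremum condition holds. By \eqref{Ref} together with $\chi_Q^s=\chi_Q$ and $\|\chi_Q\|_X\|\chi_Q\|_{X'}\leq C|Q|$, I would first obtain $\frac{1}{|Q|}\int_Q|f-g_Q|^s\,dx\leq C$ for every cube $Q$, exactly as in the $(\Leftarrow)$ part of Theorem \ref{ThmB}. It then remains to pass from $g_Q$ to $|f|_Q$. Split $Q=E\cup F$ with $E=\{x\in Q:f(x)\geq|f|_Q\}$ and $F=Q\setminus E$. Since $\int_Q(f-|f|_Q)\,dx=\int_Q(f-|f|)\,dx\leq 0$, one has $\int_E(f-|f|_Q)\leq\int_F(|f|_Q-f)$; and on $F$ the chain $f<|f|_Q\leq g_Q$ gives $|f|_Q-f\leq g_Q-f=|f-g_Q|$. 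Hence
$$\int_Q\big|f-|f|_Q\big|\,dx=\int_E(f-|f|_Q)+\int_F(|f|_Q-f)\leq 2\int_Q|f-g_Q|\,dx,$$
and dividing by $|Q|$ and applying H\"older's inequality (this is where the hypothesis $s\geq 1$ enters) gives $\frac{1}{|Q|}\int_Q|f-|f|_Q|\,dx\leq 2\big(\frac{1}{|Q|}\int_Q|f-g_Q|^s\,dx\big)^{1/s}\leq C$, so $f\in\mathcal{\bar{L}}^{1,n}(\mathbb{R}^n)$ by Theorems \ref{p<1} and \ref{p>1}.

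The main obstacle is the sufficiency step of converting control of the fractional quantity $|f-g_Q|$ into control of $|f-|f|_Q|$: this is where the nonnegativity structure must be exploited, through the two one-sided comparisons $g_Q\geq|f|_Q$ (pointwise on $Q$) and $f_Q\leq|f|_Q$, and where the restriction $s\geq 1$ becomes essential, since for $0<s<1$ the necessity is open as noted after Theorem \ref{ThmB}. A secondary technical point is to make the constants in Theorem \ref{JNMalpha} scale correctly in $\|f\|_{\mathcal{\bar{L}}^{1,n}}$, so that the integrated tail estimate in the necessity direction produces a bound proportional to $\|f\|_{\mathcal{\bar{L}}^{1,n}}$ rather than merely a finite constant.
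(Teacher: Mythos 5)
Your proof is correct, and in the necessity direction it is exactly the argument the paper intends: the paper states Theorem \ref{D} without proof, presenting it as a consequence of the Rubio de Francia scheme of Theorem \ref{ThmB} combined with the John--Nirenberg inequality for the fractional maximal function (Theorem \ref{JNMalpha}), and your reduction to $\big(\frac{1}{|Q|}\int_Q|f-g_Q|^{sq'}dx\big)^{1/(sq')}$ followed by integration of the exponential tail is precisely that route. Where your write-up genuinely adds something is the sufficiency direction. The paper never records the passage from $\frac{1}{|Q|}\int_Q|f-g_Q|^s\,dx\leq C$ to $f\in\mathcal{\bar{L}}^{1,n}(\mathbb{R}^n)$; even in the weighted analogue proved at the end of Section \ref{BANACH}, the final conclusion ``therefore $f\in\mathcal{\bar{L}}^{1,n}(\mathbb{R}^n)$'' is asserted after bounding $\frac{1}{|Q|}\int_Q|f-g_Q|\,dx$, with the comparison step left implicit. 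Your argument supplies it: the splitting $Q=E\cup F$ with $E=\{f\geq|f|_Q\}$, the sign observation $\int_Q(f-|f|_Q)\,dx=\int_Q(f-|f|)\,dx\leq 0$, and the one-sided pointwise bound $|f|_Q\leq g_Q$ (which holds since one may take $Q'=Q$ in the supremum defining $M_{\alpha,Q}$) together give $\int_Q|f-|f|_Q|\,dx\leq 2\int_Q|f-g_Q|\,dx$, after which H\"older (using $s\geq 1$) and Theorems \ref{p<1} and \ref{p>1} finish the proof. This is the same mechanism as in the paper's Theorem \ref{def} and in \cite{BMR}, but it is a needed step that the paper omits, and your version of it is clean and correct; your closing remark about why $s\geq 1$ is essential also matches the paper's caveat following Theorem \ref{ThmB}.
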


As applications, we obtain the necessary conditions for commutators on Banach function spaces.
\begin{theorem}
Let $X$ be a ball Banach function space such that $M$ is bounded on the associate space $X'$. If the commutator satisfy $[b,M]: X\rightarrow X$, then $b\in BMO(\mathbb{R}^n)$  with $b^{-}\in L^{\infty}(\mathbb{R}^n)$..
\end{theorem}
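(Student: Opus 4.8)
The plan is to reduce the conclusion to the maximal-function characterization already established in Theorem \ref{W}. Indeed, applying Theorem \ref{W} with $s=1$ (so that $X^{s}=X$), together with the equivalence $\mathcal{\bar{L}}^{1,n}(\mathbb{R}^n)=\{b\in BMO(\mathbb{R}^n): b^{-}\in L^{\infty}(\mathbb{R}^n)\}$ furnished by Theorem \ref{def}, it suffices to produce a finite constant $C$ with
$$\sup_{Q}\frac{\|(b-M_{Q}b)\chi_{Q}\|_{X}}{\|\chi_{Q}\|_{X}}\leq C.$$
So I would fix an arbitrary cube $Q$ and test the bounded commutator $[b,M]$ on the indicator function $f=\chi_{Q}$, aiming to control $(b-M_{Q}b)\chi_{Q}$ by $[b,M]\chi_{Q}$ pointwise.

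The key is a pointwise identity on $Q$. Since $M\chi_{Q}(x)=1$ for every $x\in Q$ (the average over $Q$ itself realizes the supremum, and no cube gives more), the definition of the commutator gives, for $x\in Q$,
$$[b,M]\chi_{Q}(x)=b(x)\,M\chi_{Q}(x)-M(b\chi_{Q})(x)=b(x)-M(b\chi_{Q})(x).$$
Next I would compare the global maximal function $M(b\chi_{Q})$ with the local one $M_{Q}b$. Restricting the supremum defining $M(b\chi_{Q})(x)$ to subcubes $Q'\subseteq Q$ containing $x$ (on which $b\chi_{Q}=b$) yields $M_{Q}b(x)\leq M(b\chi_{Q})(x)$; meanwhile the Lebesgue differentiation theorem gives $b(x)\leq|b(x)|\leq M_{Q}b(x)$ for a.e.\ $x\in Q$. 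Chaining these inequalities,
$$0\leq M_{Q}b(x)-b(x)\leq M(b\chi_{Q})(x)-b(x)=-[b,M]\chi_{Q}(x)=\big|[b,M]\chi_{Q}(x)\big|$$
for a.e.\ $x\in Q$, where the penultimate equality uses that $[b,M]\chi_{Q}\leq0$ on $Q$.

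Consequently $\big|(b-M_{Q}b)\chi_{Q}\big|\leq\big|[b,M]\chi_{Q}\big|$ pointwise on $\mathbb{R}^n$, since the left-hand side vanishes off $Q$. The lattice (monotonicity) axiom (2) of a ball Banach function space then gives
$$\|(b-M_{Q}b)\chi_{Q}\|_{X}\leq\|[b,M]\chi_{Q}\|_{X}\leq\|[b,M]\|_{X\to X}\,\|\chi_{Q}\|_{X},$$
and dividing by $\|\chi_{Q}\|_{X}$ and taking the supremum over $Q$ produces the required bound $C=\|[b,M]\|_{X\to X}$, finishing the argument. The step that must be handled with care is the sign analysis of the middle display: one must verify that $M(b\chi_{Q})\geq b$ a.e.\ on $Q$ so that $[b,M]\chi_{Q}$ is nonpositive there and its modulus genuinely dominates the nonnegative quantity $M_{Q}b-b$. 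This is precisely where the structure of the Hardy--Littlewood maximal operator enters, through its monotonicity in $|b|$, the elementary comparison $M_{Q}b\leq M(b\chi_{Q})$, and the Lebesgue differentiation theorem; the reduction to Theorem \ref{W} at $s=1$ and the invocation of the norm axiom of $X$ are otherwise routine.
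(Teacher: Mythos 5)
Your proposal is correct and takes essentially the same route as the paper's proof: test $[b,M]$ on $f=\chi_{Q}$, control $(b-M_{Q}b)\chi_{Q}$ pointwise by $[b,M]\chi_{Q}$ on $Q$, apply the lattice property of $\|\cdot\|_{X}$, and conclude via Theorem \ref{W} with $s=1$. The only difference is a technical detail: the paper asserts the exact identity $M(b\chi_{Q})=M_{Q}(b)$ on $Q$, whereas you use only the one-sided inequality $M_{Q}b\leq M(b\chi_{Q})$ together with the sign analysis via the Lebesgue differentiation theorem, which is a slightly more careful rendering of the same step.
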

\begin{proof}
For any cube $Q$, we write $f=\chi_{Q}$, then for any $x\in Q$
$$M(f)(x)=1, M(bf)(x)=M_{Q}(b)(x).$$
This shows that
$[b,M](f)(x)=b(x)-M_{Q}(b)(x)$
and
$$\|[b,M](f)\|_{X}=\|(b-M_{Q}(b))\chi_{Q}\|_{X}\leq \|[b,M]\|_{X\rightarrow X}\|\chi_{Q}\|_{X}.$$
Since this is true for every cube $Q$, $b\in BMO(\mathbb{R}^n)$ by Theorem \ref{W}.
\end{proof}

\begin{theorem}\label{H}
Let $X$ and $Y$ be the ball Banach function space. Suppose that $M$ is bounded on the associate space $Y'$ and $M_{\alpha}$ is bounded from $Y'$ to $X'$. If the commutator satisfy $[b,M_{\alpha}]: X\rightarrow Y$, then $b\in BMO(\mathbb{R}^n)$ with $b^{-}\in L^{\infty}(\mathbb{R}^n)$.
\end{theorem}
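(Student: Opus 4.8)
The plan is to reduce everything to the characterization of $\mathcal{\bar{L}}^{1,n}(\mathbb{R}^n)$ supplied by Theorem \ref{D}. By Theorem \ref{def}, the conclusion that $b\in BMO(\mathbb{R}^n)$ with $b^{-}\in L^{\infty}(\mathbb{R}^n)$ is equivalent to $b\in\mathcal{\bar{L}}^{1,n}(\mathbb{R}^n)$. Since the hypothesis places $M$ on the associate space $Y'$, Theorem \ref{D} is applicable with the space $Y$ and exponent $s=1$, so it suffices to establish the uniform bound
\[
\sup_{Q}\frac{\big\|\big(b-|Q|^{-\alpha/n}M_{\alpha,Q}(b)\big)\chi_{Q}\big\|_{Y}}{\|\chi_{Q}\|_{Y}}<\infty .
\]

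To manufacture this quantity from the commutator, I would test $[b,M_{\alpha}]$ on the indicator $f=\chi_{Q}$. For $x\in Q$ one has the two pointwise identities $M_{\alpha}(\chi_{Q})(x)=|Q|^{\alpha/n}$ and $M_{\alpha}(b\chi_{Q})(x)=M_{\alpha,Q}(b)(x)$; the first is an elementary computation, and the second is the standard fact that, because $b\chi_{Q}$ is supported in $Q$ and $x\in Q$, the cubes realizing the fractional maximal function may be taken inside $Q$ (the exact analogue of $M(b\chi_{Q})=M_{Q}(b)$ used in the preceding theorem). Hence, on $Q$,
\[
[b,M_{\alpha}](\chi_{Q})=|Q|^{\alpha/n}\big(b-|Q|^{-\alpha/n}M_{\alpha,Q}(b)\big).
\]
Using the lattice property (axiom (2)) of $\|\cdot\|_{Y}$ to restrict to $Q$, and then the boundedness $[b,M_{\alpha}]\colon X\to Y$, I obtain
\[
|Q|^{\alpha/n}\big\|\big(b-|Q|^{-\alpha/n}M_{\alpha,Q}(b)\big)\chi_{Q}\big\|_{Y}=\big\|[b,M_{\alpha}](\chi_{Q})\chi_{Q}\big\|_{Y}\le\|[b,M_{\alpha}]\|_{X\to Y}\,\|\chi_{Q}\|_{X}.
\]

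Dividing by $|Q|^{\alpha/n}\|\chi_{Q}\|_{Y}$, the target ratio is controlled by $\|[b,M_{\alpha}]\|_{X\to Y}$ times the ratio $\|\chi_{Q}\|_{X}/(|Q|^{\alpha/n}\|\chi_{Q}\|_{Y})$, so everything comes down to the scaling comparison $\|\chi_{Q}\|_{X}\lesssim|Q|^{\alpha/n}\|\chi_{Q}\|_{Y}$ uniformly in $Q$. This is the step I expect to be the main obstacle. In the model case $X=L^{p}$, $Y=L^{q}$ it is nothing but the exponent balance $1/p-1/q=\alpha/n$ (there $\|\chi_{Q}\|_{L^{p}}=|Q|^{1/p}=|Q|^{\alpha/n}|Q|^{1/q}$), which is exactly what the hypothesis $M_{\alpha}\colon Y'\to X'$ encodes abstractly.

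The reverse inequality $\|\chi_{Q}\|_{X}\gtrsim|Q|^{\alpha/n}\|\chi_{Q}\|_{Y}$ is easy from the hypotheses: testing $M_{\alpha}\colon Y'\to X'$ on $\chi_{Q}$ and using $M_{\alpha}(\chi_{Q})\ge|Q|^{\alpha/n}\chi_{Q}$ gives $|Q|^{\alpha/n}\|\chi_{Q}\|_{X'}\lesssim\|\chi_{Q}\|_{Y'}$, which together with the universal bound $\|\chi_{Q}\|_{X}\|\chi_{Q}\|_{X'}\gtrsim|Q|$ from \eqref{Ref} and the two-sided estimate $\|\chi_{Q}\|_{Y}\|\chi_{Q}\|_{Y'}\approx|Q|$ (valid since $M$ is bounded on $Y'$) yields $\gtrsim$. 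The difficulty is the direction actually needed, namely $\lesssim$, which requires an \emph{upper} bound on $\|\chi_{Q}\|_{X}$; testing the sublinear $M_{\alpha}$ on indicators only ever produces lower bounds on source-side norms, and the clean self-duality $\|M_{\alpha}\|_{Y'\to X'}=\|\cdot\|_{X\to Y}$ enjoyed by the linear Riesz potential $I_{\alpha}$ is lost for $M_{\alpha}$. I would therefore try to recover $\lesssim$ by linearizing through $I_{\alpha}$ (using $M_{\alpha}f\lesssim I_{\alpha}(|f|)$ and $I_{\alpha}\chi_{Q}\approx|Q|^{\alpha/n}$ on $Q$) and transferring the estimate across the associate spaces, most likely at the cost of also invoking the boundedness of $M$ on $X'$—the abstract counterpart of requiring $1<p<\infty$ in the classical statement, and the point at which I would need to be most careful. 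Once the comparison $\|\chi_{Q}\|_{X}\approx|Q|^{\alpha/n}\|\chi_{Q}\|_{Y}$ is secured, the displayed inequality delivers the uniform bound above, and Theorems \ref{D} and \ref{def} complete the proof.
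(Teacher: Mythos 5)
Your argument coincides with the paper's up to the decisive step: the paper also reduces to Theorem \ref{D} (with $s=1$), tests the commutator on $f=|Q|^{-\alpha/n}\chi_{Q}$ (your $f=\chi_{Q}$ after rescaling), uses the identities $M_{\alpha}(f)=1$ and $M_{\alpha}(bf)=|Q|^{-\alpha/n}M_{\alpha,Q}(b)$ on $Q$, and arrives at exactly your display
$$\big\|\big(b-|Q|^{-\alpha/n}M_{\alpha,Q}(b)\big)\chi_{Q}\big\|_{Y}\leq|Q|^{-\alpha/n}\|[b,M_{\alpha}]\|_{X\to Y}\|\chi_{Q}\|_{X}.$$
Where you stop, however, is where the proof still has to be finished: you never establish the scaling comparison $\|\chi_{Q}\|_{X}\lesssim|Q|^{\alpha/n}\|\chi_{Q}\|_{Y}$; you only explain why the obvious testing arguments produce the opposite inequality, and your proposed repair (linearizing through $I_{\alpha}$ and ``invoking the boundedness of $M$ on $X'$'') appeals to a hypothesis that is not in the statement of the theorem and cannot be deduced from the stated ones. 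As written, the proposal is therefore an incomplete proof: the crucial inequality is correctly isolated but not proved.

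For comparison, the paper closes this step by citing Berezhno\u{i} \cite[Lemma 2.1]{B1999}: from the $(Y',X')$-boundedness of $M_{\alpha}$ it asserts $\|\chi_{Q}\|_{Y'}\|\chi_{Q}\|_{X}\leq C|Q|^{1-\alpha/n}$ and then concludes
$$\big\|\big(b-|Q|^{-\alpha/n}M_{\alpha,Q}(b)\big)\chi_{Q}\big\|_{Y}\leq\frac{C|Q|}{\|\chi_{Q}\|_{Y'}}\leq C\|\chi_{Q}\|_{Y},$$
the last inequality being the generalized H\"{o}lder inequality for $Y$. (The exponent as printed is inconsistent with this use; the computation actually requires $\|\chi_{Q}\|_{Y'}\|\chi_{Q}\|_{X}\leq C|Q|^{1+\alpha/n}$, which, because $\|\chi_{Q}\|_{Y}\|\chi_{Q}\|_{Y'}\approx|Q|$ under the $M$-on-$Y'$ hypothesis, is precisely your comparison $\|\chi_{Q}\|_{X}\lesssim|Q|^{\alpha/n}\|\chi_{Q}\|_{Y}$.) Your skepticism at this point is in fact well founded: what testing $M_{\alpha}\colon Y'\to X'$ on characteristic functions genuinely yields is $\|\chi_{Q}\|_{Y}\|\chi_{Q}\|_{X'}\leq C|Q|^{1-\alpha/n}$ --- the pairing of $X'$ with $Y$, not of $X$ with $Y'$ --- and converting this into the needed bound requires $\|\chi_{Q}\|_{X}\|\chi_{Q}\|_{X'}\lesssim|Q|$, i.e.\ exactly the kind of maximal-function hypothesis on $X'$ (or $X$) that you anticipated needing. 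So your proof has a genuine gap, but it sits at the very point where the paper's own argument leans on a cited lemma whose pairing of spaces deserves scrutiny; a complete argument must either derive $\|\chi_{Q}\|_{X}\lesssim|Q|^{\alpha/n}\|\chi_{Q}\|_{Y}$ from the stated hypotheses or add an assumption (such as the boundedness of $M$ on $X'$, or the scaling condition itself) that guarantees it.
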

\begin{proof}
For any cube $Q$, we write $f=|Q|^{-\alpha/n}\chi_{Q}$, then for any $x\in Q$
$$M(f)(x)=1, M_{\alpha}(bf)(x)=|Q|^{-\alpha/n}M_{\alpha,Q}(b)(x).$$
This shows that
$[b,M_{\alpha}](f)(x)=b(x)-|Q|^{-\alpha/n}M_{\alpha,Q}(b)(x)$
and
$$\|[b,M](f)\|_{Y}=\|(b-|Q|^{-\alpha/n}M_{\alpha,Q}(b))\chi_{Q}\|_{Y}\leq |Q|^{-\alpha/n}\|[b,M_{\alpha}]\|_{X\rightarrow Y}\|\chi_{Q}\|_{X}.$$
From the $(Y',X')$ boundedness of $M_{\alpha}$ and \cite[Lemma 2.1]{B1999}, we have
$$\|\chi_{Q}\|_{Y'}\|\chi_{Q}\|_{X}\leq C|Q|^{1-\alpha/n}.$$
We can now continue the above estimate:
$$\|(b-|Q|^{-\alpha/n}M_{\alpha,Q}(b))\chi_{Q}\|_{Y}\leq \frac{C|Q|}{\|\chi_{Q}\|_{Y'}}\leq C\|\chi_{Q}\|_{Y}.$$
Then $b\in BMO(\mathbb{R}^n)$ by Theorem \ref{D}.
\end{proof}

As we discuss above, the assumption of a geometric condition on the underlying
spaces that is closely related to the boundedness of the Hardy-Littlewood maximal
operator, and which holds in a large number of important special cases, such as the Morrey and variable Lebesgue spaces. However, Theorem \ref{H} cannot generalized to the weighted Lebesgue spaces. For example, the maximal function may not bounded on $L^{q'}(\omega^{-q'})$ for $\omega\in A_{p,q}$. In fact, for the weighted Lebesgue spaces, we have

\begin{theorem}
Let $1<p<q<\infty$, $\frac{1}{p}-\frac{1}{q}=\frac{\alpha}{n}$ and $\omega\in A_{p,q}$. Then $f\in \mathcal{\bar{L}}^{1,n}(\mathbb{R}^n)$ if and only if
$$\sup_{Q}\frac{|Q|^{\alpha/n}\|(f-|Q|^{-\alpha/n}M_{\alpha,Q}(f))\chi_{Q}\|_{L^{q}(\omega^q)}}{\|\chi_{Q}\|_{L^{p}(\omega^p)}}<\infty.$$
\end{theorem}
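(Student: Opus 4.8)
The plan is to abbreviate $g:=f-|Q|^{-\alpha/n}M_{\alpha,Q}(f)$ and to push the \emph{unweighted} exponential decay already available for $g$ through two weight mechanisms: the $A_\infty$ property of $\omega^q$ (which converts Lebesgue-measure decay into $\omega^q$-measure decay) and the Muckenhoupt--Wheeden boundedness $M_\alpha:L^p(\omega^p)\to L^q(\omega^q)$, which holds precisely because $\omega\in A_{p,q}$. Throughout I write $\langle h\rangle_Q:=\frac{1}{|Q|}\int_Q h\,dx$ and $\omega^q(S):=\int_S\omega^q\,dx$, and I recall the standard fact that $\omega\in A_{p,q}$ forces $\omega^q\in A_{1+q/p'}\subset A_\infty$, so that the decay estimate \eqref{weight-eq1} applies to the weight $\omega^q$.

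For the \emph{necessity}, by Theorem \ref{def} the hypothesis $f\in\mathcal{\bar{L}}^{1,n}(\mathbb{R}^n)$ is equivalent to $f\in BMO(\mathbb{R}^n)$ with $f^-\in L^\infty$, so Theorem \ref{JNMalpha} supplies the bound $\big|\{x\in Q:|g(x)|>t\}\big|\le a_1 e^{-a_2t}|Q|$ for every cube $Q$. Feeding this into \eqref{weight-eq1} for $\omega^q\in A_\infty$ gives $\omega^q(\{x\in Q:|g(x)|>t\})\le C\,e^{-a_2\epsilon t}\,\omega^q(Q)$, and the layer-cake identity $\int_Q|g|^q\omega^q\,dx=q\int_0^\infty t^{q-1}\omega^q(\{x\in Q:|g(x)|>t\})\,dt$ then integrates to $\|g\chi_Q\|_{L^q(\omega^q)}\le C\,\omega^q(Q)^{1/q}$. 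To finish I would compare $\omega^q(Q)^{1/q}$ with $\omega^p(Q)^{1/p}$ by testing the boundedness $M_\alpha:L^p(\omega^p)\to L^q(\omega^q)$ on $\chi_Q$: since $M_\alpha(\chi_Q)(x)\ge|Q|^{\alpha/n}$ for $x\in Q$, one gets $|Q|^{\alpha/n}\omega^q(Q)^{1/q}\le\|M_\alpha\chi_Q\|_{L^q(\omega^q)}\le C\|\chi_Q\|_{L^p(\omega^p)}=C\,\omega^p(Q)^{1/p}$. Combining the two displays yields the asserted finiteness of the supremum.

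For the \emph{sufficiency} the hypothesis reads $\|g\chi_Q\|_{L^q(\omega^q)}\le C|Q|^{-\alpha/n}\omega^p(Q)^{1/p}$. Applying the unweighted Hölder inequality with exponents $q,q'$ gives $\int_Q|g|\,dx\le\|g\chi_Q\|_{L^q(\omega^q)}\big(\int_Q\omega^{-q'}\big)^{1/q'}$, and I would then control $\omega^p(Q)^{1/p}\big(\int_Q\omega^{-q'}\big)^{1/q'}$ by the two elementary power-mean comparisons $\langle\omega^p\rangle_Q^{1/p}\le\langle\omega^q\rangle_Q^{1/q}$ (valid since $p<q$) and $\langle\omega^{-q'}\rangle_Q^{1/q'}\le\langle\omega^{-p'}\rangle_Q^{1/p'}$ (valid since $q'<p'$); their product is exactly the $A_{p,q}$ quantity $\langle\omega^q\rangle_Q^{1/q}\langle\omega^{-p'}\rangle_Q^{1/p'}$, bounded by the $A_{p,q}$ constant, so that $\omega^p(Q)^{1/p}\big(\int_Q\omega^{-q'}\big)^{1/q'}\le C|Q|^{1+\alpha/n}$ and hence $\frac{1}{|Q|}\int_Q|g|\,dx\le C$. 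Finally I would invoke $|f|_Q\le|Q|^{-\alpha/n}M_{\alpha,Q}(f)(x)$ for $x\in Q$: on $\{f<|f|_Q\}$ this gives $|g(x)|\ge|f|_Q-f(x)$, and splitting $\int_Q|f-f_Q|$ and $\int_Q f^-$ across this set exactly as in the proof of Theorem \ref{def} produces both $f\in BMO$ and $\frac{1}{|Q|}\int_Q f^-\le\frac12\cdot\frac{1}{|Q|}\int_Q|g|\le C$, whence $f^-\in L^\infty$ by Lebesgue differentiation, i.e. $f\in\mathcal{\bar{L}}^{1,n}(\mathbb{R}^n)$.

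The main obstacle is the pair of passages between $\omega^p$ and $\omega^q$. In the necessity direction the inequality $|Q|^{\alpha/n}\omega^q(Q)^{1/q}\le C\,\omega^p(Q)^{1/p}$ runs \emph{against} Jensen's inequality and therefore cannot come from an elementary estimate; it genuinely encodes the $A_{p,q}$ hypothesis, which is why I route it through the boundedness of $M_\alpha$ tested on $\chi_Q$ rather than the general Banach-space scheme of Theorems \ref{D} and \ref{H} (the latter fails here because $M$ need not be bounded on $L^{q'}(\omega^{-q'})$). In the sufficiency direction the delicate point is that Hölder unavoidably produces the mismatched factor $\omega^{-q'}$, and one must tame it back to the $\omega^{-p'}$ of the $A_{p,q}$ definition via the power-mean comparison; once these two weight manipulations are in place, the remainder is a transcription of the John--Nirenberg and BMO machinery of Theorems \ref{JNMalpha} and \ref{def} into the weighted language supplied by the $A_\infty$ decay \eqref{weight-eq1}.
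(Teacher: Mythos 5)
Your proposal is correct, and its skeleton is the same as the paper's: for necessity, Theorem \ref{def} plus Theorem \ref{JNMalpha} give the unweighted exponential decay, the $A_\infty$ property of $\omega^q$ converts it into $\omega^q$-measure decay, and layer-cake integration yields $\|g\chi_Q\|_{L^q(\omega^q)}\le C\,\omega^q(Q)^{1/q}$; for sufficiency, H\"older plus the $A_{p,q}$ condition reduce everything to $\frac{1}{|Q|}\int_Q|g|\,dx\le C$. The differences are local. In the necessity step, where you test the Muckenhoupt--Wheeden boundedness $M_\alpha:L^p(\omega^p)\to L^q(\omega^q)$ on $\chi_Q$ to get $|Q|^{\alpha/n}\omega^q(Q)^{1/q}\le C\,\omega^p(Q)^{1/p}$, the paper instead proves this inequality in three lines from the plain H\"older bound $|Q|\le\omega^p(Q)^{1/p}\big(\int_Q\omega^{-p'}\big)^{1/p'}$ and the definition of $A_{p,q}$; the paper's route is self-contained, yours leans on an external (classical) theorem, but both are valid and encode the same testing condition. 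In the sufficiency step the paper applies H\"older with exponents $(p,p')$ and then passes from $L^p(\omega^p)$ to $L^q(\omega^q)$ by one power-mean comparison, whereas you apply H\"older with $(q,q')$ and tame the resulting $\omega^{-q'}$ by two power-mean comparisons; these are equivalent rearrangements of the same Jensen/$A_{p,q}$ argument. Two points where your write-up is actually tighter than the paper's: you invoke the correct direction of the $A_\infty$ estimate \eqref{weight-eq1} (the paper's text displays the reverse inequality \eqref{weight-eq2} while using the conclusion that only follows from \eqref{weight-eq1}), and you spell out the final reduction --- using $|f|_Q\le|Q|^{-\alpha/n}M_{\alpha,Q}(f)(x)$ on $Q$ and the splitting from Theorem \ref{def} to deduce $f\in BMO(\mathbb{R}^n)$ and $f^-\in L^\infty(\mathbb{R}^n)$ from $\frac{1}{|Q|}\int_Q|g|\,dx\le C$ --- which the paper asserts without proof.
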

\begin{proof}
$(\Rightarrow)$: By Theorem \ref{JNMalpha}, we have
$$\Big|\big\{x\in Q: |f(x)-|Q|^{-\alpha/n}M_{\alpha,Q}(f)(x)|>t\big\}\Big|\leq a_{1}e^{-a_{2}t}|Q|.$$
Since $\omega\in A_{p,q}$, we have $\mu:=\omega^{q}\in A_{q}\subset A_{\infty}$. Then for any cube $Q$ and any measurable set $E$ contained in $Q$, there are positive constants $C_{0}$ and $L$ such that
$$\Big(\frac{|E|}{|Q|}\Big)^{N}\leq C\frac{\mu(E)}{\mu(Q)}.$$
This implies that
$$\mu\big(\{x\in Q:|f(x)-|Q|^{-\alpha/n}M_{\alpha,Q}(f)(x)|>\lambda\}\big)\leq Ce^{-ct}\mu(Q).$$
Hence, for any ball $Q$,
\begin{eqnarray*}
&&\|(f(x)-|Q|^{-\alpha/n}M_{\alpha,Q}(f)(x))\chi_{Q}\|^{q}_{L^{q}(\mu)}\\
&=&q\int_{0}^{\infty}\lambda^{q-1}\mu\big(\{x\in Q:|f(x)-|Q|^{-\alpha/n}M_{\alpha,Q}(f)(x)|>\lambda\}\big)d\lambda\\
&\leq&C\int_{0}^{\infty}\lambda^{q-1}e^{-ct}\mu(Q)d\lambda\\
&\leq&C\mu(Q).
\end{eqnarray*}
By H\"{o}lder inequality, we have
\begin{equation*}
|Q|\leq \bigg(\int_{Q}\omega(x)^{p}dx\bigg)^{1/p}\bigg(\int_{Q}\omega(x)^{-p'}dx\bigg)^{1/p'}.
\end{equation*}
Then, it follows from $\omega\in A_{p,q}$ that
\begin{eqnarray*}
\frac{\mu(Q)^{1/q}|Q|^{\alpha/n}}{\omega^{p}(Q)^{1/p}}
&\leq&|Q|^{1/p-1/q-1}\bigg(\int_{Q}\omega(x)^{q}dx\bigg)^{1/q}\bigg(\int_{Q}\omega(x)^{-p'}dx\bigg)^{1/p'}\\
&\leq&\bigg(\frac{1}{|Q|}\int_{Q}\omega(x)^{q}dx\bigg)^{1/q}\bigg(\frac{1}{|Q|}\int_{Q}\omega(x)^{-p'}dx\bigg)^{1/p'}\\
&\leq&C.
\end{eqnarray*}
Thus, $f\in \mathcal{\bar{L}}^{1,n}(\mathbb{R}^n)$ implies that
$$\frac{|Q|^{\alpha/n}}{\omega^{p}(Q)^{1/p}}\bigg(\int_{Q}|f(x)-|Q|^{-\alpha/n}M_{\alpha,Q}(f)(x)|^{q}\omega(x)^{q}dx\bigg)^{1/q}\leq C.$$

$(\Rightarrow)$: Now we prove that if there exists a constant $C$ such that for any cube $Q$,
$$\frac{1}{\omega^{p}(Q)^{1/p}}\bigg(\int_{Q}|f(x)-|Q|^{-\alpha/n}M_{\alpha,Q}(f)(x)|^{q}\omega(x)^{q}dx\bigg)^{1/q}
\leq C|Q|^{-\alpha/n}.$$
then $f\in \mathcal{\bar{L}}^{1,n}(\mathbb{R}^n)$.

When $p>1$, H\"{o}lder inequality gives us that
\begin{eqnarray*}
&&\int_{Q}|f(x)-|Q|^{-\alpha/n}M_{\alpha,Q}(f)(x)|dx\\
&\leq& \bigg(\int_{Q}|f(x)-|Q|^{-\alpha/n}M_{\alpha,Q}(f)(x)|^{p}\omega(x)^{p}dx\bigg)^{1/p}\bigg(\int_{Q}\omega(x)^{-p'}dx\bigg)^{1/p'}\\
&\leq&C|Q|^{\alpha/n}\bigg(\int_{Q}|f(x)-|Q|^{-\alpha/n}M_{\alpha,Q}(f)(x)|^{q}\omega(x)^{q}dx\bigg)^{1/q}\bigg(\int_{Q}\omega(x)^{-p'}dx\bigg)^{1/p'}\\
&\leq&C\bigg(\int_{Q}\omega(x)^{-p'}dx\bigg)^{1/p'}\bigg(\int_{Q}\omega(x)^{p}dx\bigg)^{1/p}\\
&\leq&C|Q|\bigg(\frac{1}{|Q|}\int_{Q}\omega(x)^{-p'}dx\bigg)^{1/p'}\bigg(\frac{1}{|Q|}\int_{Q}\omega(x)^{q}dx\bigg)^{1/q}\\
&\leq&C|Q|.
\end{eqnarray*}

When $p=1$, applying the definition of $A_{1,q}$, we have
\begin{eqnarray*}
&&\int_{Q}|f(x)-|Q|^{-\alpha/n}M_{\alpha,Q}(f)(x)|dx\\
&\leq& \int_{Q}|f(x)-|Q|^{-\alpha/n}M_{\alpha,Q}(f)(x)|\omega(x)dx\cdot\big\|\frac{1}{\omega}\chi_{Q}\big\|_{L^{\infty}}\\
&\leq&C\bigg(\int_{Q}|f(x)-|Q|^{-\alpha/n}M_{\alpha,Q}(f)(x)|^{q}\omega(x)^{q}dx\bigg)^{1/q}\cdot\big\|\frac{1}{\omega}\chi_{Q}\big\|_{L^{\infty}}|Q|^{\alpha/n}\\
&\leq&C|Q|.
\end{eqnarray*}
Therefore, we conclude that $f\in \mathcal{\bar{L}}^{1,n}(\mathbb{R}^n)$.
\end{proof}

Finally, we can establish the similar results for $n<\lambda\leq n+p$ and we omitted the detail.

\section{Remarks}\label{RAO}

In the bilinear setting, the linear commutator is defined by
$$[b_{1},T]_{1}(f_{1},f_{2})(x):=b_{1}T(f_{1},f_{2})(x)-T(b_{1}f_{1},f_{2})(x),$$
$$[b_{2},T]_{1}(f_{1},f_{2})(x):=b_{2}T(f_{1},f_{2})(x)-T(f_{1},b_{2}f_{2})(x)$$
and
$$[\Sigma \vec{b},T](f_{1},f_{2})(x):=[b_{1},T]_{1}(f_{1},f_{2})(x)+[b_{2},T]_{1}(f_{1},f_{2})(x).$$
The boundedness result of linear commutators of multilinear Calder\'{o}n-Zygmund operators $[\Sigma \vec{b},T]$ was shown in \cite{PT2003}. The necessity conclusion lasted a long time and the proofs in \cite{Chaf2016} treat the term $[b_{1},T]_{1}$ only. However, the boundedness of $[\Sigma \vec{b},T]$ can not implies that $[b_{i},T]_{i}$ is a bounded operator. Using some tedious calculations in applications, the linear characterization result was obtained in \cite{WZTadm}. However, it is easy to obtain the linear characterization result related to multilinear maximal operator
$$\mathcal{M}(f_{1},f_{2})(x)=\sup_{Q\ni x}\prod_{i=1}^{2}\frac{1}{|Q|}\int_{Q}|f_{i}(y_{i})|dy_{i}.$$
\begin{theorem}
Let $\vec{b}=(b_{1},b_{2}), 1<p, p_{1},p_{2}<\infty$ and $1/p=1/p_{1}+1/p_{2}$. If the linear commutator $[\Sigma \vec{b},\mathcal{M}]$ is bounded from $L^{p_{1}(\mathbb{R}^n)}\times L^{p_{2}}(\mathbb{R}^n)$ to $L^{p}(\mathbb{R}^n)$, then $b_{i}\in BMO(\mathbb{R}^n)$ with $b_{i}\in L^{\infty}(\mathbb{R}^n)$ with $i=1,2$.
\end{theorem}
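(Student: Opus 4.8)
The plan is to test the boundedness hypothesis against the single pair $f_{1}=f_{2}=\chi_{Q}$ for an arbitrary cube $Q$, exploiting a sign property of the maximal operator that prevents the two summands of $[\Sigma\vec{b},\mathcal M]$ from cancelling. First I would record two pointwise facts on $Q$: for $x\in Q$ one has $\mathcal M(\chi_{Q},\chi_{Q})(x)=1$, since $(|Q'\cap Q|/|Q'|)^{2}\le 1$ with equality exactly when $Q'\subseteq Q$; and, restricting the defining supremum to subcubes $Q'\subseteq Q$ through $x$, one gets $A_{1}(x):=\mathcal M(b_{1}\chi_{Q},\chi_{Q})(x)\ge M_{Q}(b_{1})(x)$ and $A_{2}(x):=\mathcal M(\chi_{Q},b_{2}\chi_{Q})(x)\ge M_{Q}(b_{2})(x)$. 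By Lebesgue differentiation $M_{Q}(b_{i})(x)\ge|b_{i}(x)|\ge b_{i}(x)$ almost everywhere on $Q$, so each summand $b_{i}(x)\mathcal M(\chi_{Q},\chi_{Q})(x)-A_{i}(x)=b_{i}(x)-A_{i}(x)$ is nonpositive.

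The decisive consequence is the absence of cancellation. For every $x\in Q$ and each $j\in\{1,2\}$,
\[
\big|[\Sigma\vec{b},\mathcal M](\chi_{Q},\chi_{Q})(x)\big|=\big(A_{1}(x)-b_{1}(x)\big)+\big(A_{2}(x)-b_{2}(x)\big)\ge M_{Q}(b_{j})(x)-b_{j}(x)=\big|b_{j}(x)-M_{Q}(b_{j})(x)\big|,
\]
where the last equality uses $M_{Q}(b_{j})\ge b_{j}$. Feeding this into the operator bound, and using $\|\chi_{Q}\|_{L^{p_{i}}}=|Q|^{1/p_{i}}$ together with $1/p=1/p_{1}+1/p_{2}$, I would obtain
\[
\Big(\int_{Q}\big|b_{j}(x)-M_{Q}(b_{j})(x)\big|^{p}\,dx\Big)^{1/p}\le\big\|[\Sigma\vec{b},\mathcal M](\chi_{Q},\chi_{Q})\big\|_{L^{p}(\mathbb{R}^{n})}\le C\,|Q|^{1/p},
\]
so that dividing by $|Q|^{1/p}$ yields $\big(\frac{1}{|Q|}\int_{Q}|b_{j}-M_{Q}(b_{j})|^{p}\big)^{1/p}\le C$ uniformly in $Q$, for each $j=1,2$.

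Finally I would invoke the characterization of Bastero, Milman and Ruiz \cite{BMR} (valid for all $0<p<\infty$ once combined with the John--Nirenberg theory of Section \ref{BMO}, e.g.\ Theorems \ref{JNM1} and \ref{W}): the uniform control of $\big(\frac{1}{|Q|}\int_{Q}|b_{j}-M_{Q}(b_{j})|^{p}\big)^{1/p}$ forces $b_{j}\in BMO(\mathbb{R}^{n})$ with $b_{j}^{-}\in L^{\infty}(\mathbb{R}^{n})$ for $j=1,2$, which is the intended conclusion. The main obstacle is conceptual rather than computational: as the Remark preceding the theorem stresses, boundedness of the summed commutator $[\Sigma\vec{b},\mathcal M]$ does not in general imply boundedness of the individual pieces $[b_{i},\mathcal M]_{i}$, so a naive reduction to the linear case is unavailable. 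What rescues the argument is the monotonicity of the maximal operator, which makes every summand $b_{i}-A_{i}$ pointwise nonpositive on $Q$; once all terms share one sign the triangle inequality is lossless and each $|b_{j}-M_{Q}(b_{j})|$ is dominated by the full commutator. Establishing the two pointwise facts $\mathcal M(\chi_{Q},\chi_{Q})=1$ and $A_{i}\ge M_{Q}(b_{i})\ge|b_{i}|$ on $Q$ is the only step that requires genuine care.
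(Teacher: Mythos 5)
Your proposal is correct and takes essentially the same route as the paper: test the commutator on $f_{1}=f_{2}=\chi_{Q}$, observe that both summands $b_{i}-\mathcal{M}(\cdot)$ are nonpositive a.e.\ on $Q$ (since $M_{Q}(b_{i})\geq |b_{i}|\geq b_{i}$) so that no cancellation occurs, bound the resulting average of $|b_{j}-M_{Q}(b_{j})|$ by the operator norm via H\"{o}lder and $\|\chi_{Q}\|_{L^{p_{1}}}\|\chi_{Q}\|_{L^{p_{2}}}=|Q|^{1/p}$, and conclude by the Bastero--Milman--Ruiz characterization. Your only deviation is to use the one-sided inequality $\mathcal{M}(b_{i}\chi_{Q},\chi_{Q})\geq M_{Q}(b_{i})$ on $Q$ where the paper asserts equality; this is a harmless (indeed slightly more economical) refinement, since the lower bound is the trivial direction and is all the argument needs.
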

\begin{proof}
From the fact that
\begin{eqnarray*}
&&\mathcal{M}(\chi_{Q},\chi_{Q})(x)=1, \\
&&\mathcal{M}(b_{1}\chi_{Q},\chi_{Q})(x)=M_{Q}(b_{1})(x),\\
&&\mathcal{M}(\chi_{Q},b_{2}\chi_{Q})(x)=M_{Q}(b_{2})(x),\qquad x\in Q.
\end{eqnarray*}
It is obvious that
$$b_{1}(x)\leq M_{Q}(b_{1})(x) \quad \text{and} \quad b_{2}(x)\leq M_{Q}(b_{2})(x).$$
Therefore,
\begin{eqnarray*}
\frac{1}{|Q|}\int_{Q}|b_{1}(x)-M_{Q}(b_{1})(x)|dx&&\leq \frac{1}{|Q|}\int_{Q}\Big|b_{1}(x)+b_{2}(x)-M_{Q}(b_{1})(x)-M_{Q}(b_{2})(x)\Big|dx\\
&&\leq \bigg(\frac{1}{|Q|}\int_{Q}\Big|b_{1}(x)+b_{2}(x)-M_{Q}(b_{1})(x)-M_{Q}(b_{2})(x)\Big|^{p}dx\bigg)^{1/p}\\
&&\leq \|[\Sigma \vec{b},\mathcal{M}]\|_{L^{p_{1}(\mathbb{R}^n)}\times L^{p_{2}}(\mathbb{R}^n)\rightarrow L^{p}(\mathbb{R}^n)}.
\end{eqnarray*}
Which shows that $b_{1}\in BMO(\mathbb{R}^n)$ with $b_{1}\in L^{\infty}(\mathbb{R}^n)$. So does $b_{2}$.
\end{proof}

In addition, for $0<p<\infty$ and $0<\lambda\leq n+p$, define the variant of the Morrey-Campanato class $\mathcal{\tilde{L}}^{p,\lambda}(\Omega)$
$$\|f\|^{p}_{\mathcal{\tilde{L}}^{p,\lambda}(\Omega)}:=\sup_{x\in \Omega,\rho>0}\rho^{-\lambda}\int_{Q}\Big|f(x)+|f|_{Q}\Big|^pdx<\infty.$$
We can see that $f\in \mathcal{\bar{L}}^{p,\lambda}(Q_0)$ if and only if $-f\in \mathcal{\tilde{L}}^{p,\lambda}(Q_0)$. Therefore

\begin{theorem}
For $1\leq p<\infty$ and $0\leq \lambda<n$, we have $L^{p,\lambda}(\Omega)\approx \mathcal{\tilde{L}}^{p,\lambda}(\Omega)$.
\end{theorem}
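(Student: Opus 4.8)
The plan is to reduce the statement to the equivalence $L^{p,\lambda}(\Omega)\approx \mathcal{\bar{L}}^{p,\lambda}(\Omega)$ already established at the beginning of Section \ref{MORREY}, exploiting the reflection $f\mapsto -f$ together with the remark recorded just above the theorem, namely that $f\in \mathcal{\bar{L}}^{p,\lambda}(Q_{0})$ if and only if $-f\in \mathcal{\tilde{L}}^{p,\lambda}(Q_{0})$.

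First I would record the precise effect of the sign flip on the seminorms. Since $|{-f}|=|f|$ pointwise, for every cube we have $|{-f}|_{\Omega(x_{0},\rho)}=|f|_{\Omega(x_{0},\rho)}$, and therefore
$$\int_{\Omega(x_{0},\rho)}\big|(-f)(x)-|{-f}|_{\Omega(x_{0},\rho)}\big|^{p}\,dx=\int_{\Omega(x_{0},\rho)}\big|f(x)+|f|_{\Omega(x_{0},\rho)}\big|^{p}\,dx.$$
Multiplying by $\rho^{-\lambda}$ and taking the supremum over $x_{0}\in\Omega$ and $\rho>0$ gives $\|{-f}\|_{\mathcal{\bar{L}}^{p,\lambda}(\Omega)}=\|f\|_{\mathcal{\tilde{L}}^{p,\lambda}(\Omega)}$; in particular $f\in \mathcal{\tilde{L}}^{p,\lambda}(\Omega)$ if and only if $-f\in \mathcal{\bar{L}}^{p,\lambda}(\Omega)$, with identical seminorms.

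Next I would observe that the Morrey seminorm is itself insensitive to the sign flip: because $\|f\|_{L^{p,\lambda}(\Omega)}^{p}=\sup_{x_{0},\rho}\rho^{-\lambda}\int_{\Omega(x_{0},\rho)}|f|^{p}\,dx$ depends on $f$ only through $|f|$, we have $\|{-f}\|_{L^{p,\lambda}(\Omega)}=\|f\|_{L^{p,\lambda}(\Omega)}$, hence $f\in L^{p,\lambda}(\Omega)$ if and only if $-f\in L^{p,\lambda}(\Omega)$. Combining the two observations with the equivalence of Section \ref{MORREY} applied to the function $-f$ yields the chain
$$f\in \mathcal{\tilde{L}}^{p,\lambda}(\Omega)\iff -f\in \mathcal{\bar{L}}^{p,\lambda}(\Omega)\iff -f\in L^{p,\lambda}(\Omega)\iff f\in L^{p,\lambda}(\Omega),$$
and the corresponding seminorms are comparable at each link, with the constants inherited directly from Section \ref{MORREY}. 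This proves $L^{p,\lambda}(\Omega)\approx \mathcal{\tilde{L}}^{p,\lambda}(\Omega)$.

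There is essentially no analytic obstacle here: the whole content is the symmetry $|{-f}|=|f|$, which transfers every estimate about $\mathcal{\bar{L}}^{p,\lambda}$ to $\mathcal{\tilde{L}}^{p,\lambda}$ verbatim, so none of the Campanato-type iteration in the first theorem of Section \ref{MORREY} needs to be repeated. The only point that deserves care is the bookkeeping—verifying that the averages $|f|_{\Omega(x_{0},\rho)}$ and the Morrey seminorm are genuinely reflection-invariant, which they are precisely because both are built from $|f|$ rather than from $f$. Accordingly I would present the argument as a short corollary of the first theorem of Section \ref{MORREY}.
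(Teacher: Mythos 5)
Your proposal is correct and follows essentially the same route as the paper: the paper states the theorem immediately after observing that $f\in \mathcal{\bar{L}}^{p,\lambda}(Q_0)$ if and only if $-f\in \mathcal{\tilde{L}}^{p,\lambda}(Q_0)$, deducing it from the Section~2 equivalence $L^{p,\lambda}(\Omega)\approx \mathcal{\bar{L}}^{p,\lambda}(\Omega)$ exactly as you do. Your write-up merely makes explicit the sign-invariance of the averages $|f|_{\Omega(x_0,\rho)}$ and of the Morrey seminorm, which the paper leaves implicit.
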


\begin{theorem}
Let $Q_{0}$ be a cube in $\mathbb{R}^n$. Then the following statements are equivalent:
\begin{itemize}
  \item[(i)] $f\in \mathcal{\tilde{L}}^{1,n}(Q_{0})$;
  \item[(ii)] $f\in BMO(Q_{0})$ with $f^{+}\in L^{\infty}(Q_{0})$;
  \item[(iii)] For any $Q\subset Q_{0}$ and $0<p<\infty$, there is a constant $C$ such that
  $$\frac{1}{|Q|}\int_{Q}\big||f(x)|+f_{Q}\big|^{p}dx\leq C;$$
  \item[(iv)] For every $Q\subset Q_{0}$ there is a constant $c_{Q}\geq 0$ such that
$$\frac{1}{|Q|}\int_{Q}|f(x)+c_{Q}|dx<\infty;$$
  \item[(v)] For every $Q\subset Q_{0}$, we have
$$\inf_{c\geq 0}\frac{1}{|Q|}\int_{Q}|f(x)+c|dx<\infty;$$
\end{itemize}
\end{theorem}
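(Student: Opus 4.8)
The plan is to reduce the entire statement to the $\mathcal{\bar{L}}^{1,n}(Q_0)$ theory already developed in Section~\ref{BMO}, through the reflection $g:=-f$, using the elementary fact recorded just above that $f\in\mathcal{\tilde{L}}^{p,\lambda}(Q_0)$ if and only if $-f\in\mathcal{\bar{L}}^{p,\lambda}(Q_0)$. First I would record the algebraic dictionary produced by $g=-f$: one has $|g|=|f|$, $g_Q=-f_Q$, $|g|_Q=|f|_Q$, $g^-=f^+$ and $\|g\|_{BMO(Q_0)}=\|f\|_{BMO(Q_0)}$, together with the pointwise identities $|f(x)+|f|_Q|=|g(x)-|g|_Q|$ and $\big||f(x)|+f_Q\big|=\big||g(x)|-g_Q\big|$. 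Under this dictionary each of (i)--(v) for $f$ becomes the corresponding assertion for $g$ in the $\mathcal{\bar{L}}^{1,n}$ theory, so that the theorem is obtained by transcribing results already in hand.

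Three of the equivalences are then purely formal substitutions. The equivalence (i)$\Leftrightarrow$(ii) is Theorem~\ref{def}\,(i)$\Leftrightarrow$(ii) applied to $g$, the hypothesis $f^+\in L^\infty(Q_0)$ being exactly $g^-\in L^\infty(Q_0)$ and $g\in BMO(Q_0)\Leftrightarrow f\in BMO(Q_0)$. The equivalence (i)$\Leftrightarrow$(iv) is item~(a) of the Remark following Theorem~\ref{Pp} applied to $g$, since $|g(x)-c_Q|=|f(x)+c_Q|$ when $c_Q\ge 0$; and (i)$\Leftrightarrow$(v) is Theorem~\ref{Pc1} applied to $g$, since $|g(x)-c|=|f(x)+c|$ for every $c$.

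The one point requiring genuine work is (i)$\Leftrightarrow$(iii): under the dictionary (iii) reads $\sup_{Q}|Q|^{-1}\int_Q\big||g(x)|-g_Q\big|^p\,dx<\infty$, which for $p=1$ is exactly Theorem~\ref{def}\,(iii) but is asserted here for all $0<p<\infty$. For the forward direction, starting from (i) (so that $g\in\mathcal{\bar{L}}^{1,n}(Q_0)$ and, by Theorem~\ref{def}, $g^-\in L^\infty(Q_0)$), I would use
$$\big||g(x)|-g_Q\big|\le\big|g(x)-|g|_Q\big|+\big||g|_Q-g_Q\big|=\big|g(x)-|g|_Q\big|+2(g^-)_Q\le\big|g(x)-|g|_Q\big|+2\|g^-\|_{L^\infty(Q_0)},$$
and then invoke the exponential integrability of $|g-|g|_Q|$ furnished by the John--Nirenberg inequality (Theorems~\ref{JN} and \ref{Cha0}), which bounds $|Q|^{-1}\int_Q|g-|g|_Q|^p\,dx$ uniformly in $Q$ for every $p$.

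The main obstacle is the converse of (iii) in the range $0<p<1$, where Hölder's inequality points the wrong way. There I would first recover $g^-\in L^\infty(Q_0)$: at a Lebesgue point $x_0$ with $g(x_0)<0$ one has $\big||g(x)|-g_Q\big|\to 2|g(x_0)|$ in average as $Q\downarrow x_0$, so Fatou's lemma forces $(2|g(x_0)|)^p\le C$ and hence $\|g^-\|_{L^\infty(Q_0)}\le\tfrac12 C^{1/p}$. Once $g^-$ is bounded, the pointwise estimate
$$\big|g(x)-|g|_Q\big|\le 2g^-(x)+\big||g(x)|-g_Q\big|+2(g^-)_Q$$
yields $\sup_Q|Q|^{-1}\int_Q|g-|g|_Q|^p\,dx<\infty$, so $g\in\mathcal{\bar{L}}^{p,n}(Q_0)=\mathcal{\bar{L}}^{1,n}(Q_0)$ by Theorems~\ref{p<1} and \ref{p>1}; unwinding the reflection gives $f\in\mathcal{\tilde{L}}^{1,n}(Q_0)$ and closes the cycle. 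Since this last argument is valid uniformly for all $0<p<\infty$ (the case $p\ge 1$ being in any event immediate from Hölder's inequality, which reduces it to $p=1$), it is precisely the place where the proof goes beyond a mechanical translation of the $\mathcal{\bar{L}}$ theory.
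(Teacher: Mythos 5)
Your proposal is correct and takes exactly the route the paper intends: the theorem appears in the Remarks section with no proof, stated as a consequence of the observation that $f\in\mathcal{\tilde{L}}^{p,\lambda}(Q_{0})$ if and only if $-f\in\mathcal{\bar{L}}^{p,\lambda}(Q_{0})$, which is precisely your dictionary $g=-f$ feeding into Theorems \ref{def}, \ref{Pc1} and the Remark after Theorem \ref{Pp}. Your extra John--Nirenberg argument for the forward half of (iii) and the Lebesgue-point/Fatou argument recovering $g^{-}\in L^{\infty}(Q_{0})$ for its converse when $0<p<1$ (followed by $\mathcal{\bar{L}}^{p,n}(Q_{0})=\mathcal{\bar{L}}^{1,n}(Q_{0})$ via Theorems \ref{p<1} and \ref{p>1}) correctly fills the one gap that a purely mechanical transcription of the $p=1$ statement in Theorem \ref{def}(iii) would leave.
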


\begin{theorem}\label{Campanato}
Let $0<\beta\leq 1$. For the function $f\in L_{loc}(\Omega)$, the following three statements are equivalent:
\begin{itemize}
\item [(i)] $f\in C^{0,\beta}(\Omega)$ and $f\leq 0$.
\item [(ii)] There exists a constant $C_{1}$ such that
$$\big|f(x)+|f(y)|\big|\leq C_{1}|x-y|^{\beta}$$
for almost every $x$ and $y$.
\item [(iii)] There exists a constant $C_{2}$ such that for any $0<p<\infty$ and any $x_{0}\in \Omega$ and $0<\rho<diam \Omega$
$$\Bigg(\frac{1}{|\Omega(x_{0},\rho)|}\int_{\Omega(x_{0},\rho)}\big|f(x)+|f|_{\Omega(x_{0},\rho)}\big|^pdx\Bigg)^{1/p}\leq C_{2}\rho^{\beta}.$$
\item [(iv)] There exists a constant $C_{3}$ such that for any $0<p<\infty$ and any $x_{0}\in \Omega$ and $0<\rho<diam \Omega$
$$\Bigg(\frac{1}{|\Omega(x_{0},\rho)|}\int_{\Omega(x_{0},\rho)}\big||f(x)|+f_{\Omega(x_{0},\rho)}\big|^pdx\Bigg)^{1/p}\leq C_{3}\rho^{\beta}.$$
\end{itemize}
\end{theorem}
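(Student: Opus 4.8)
The plan is to reduce the equivalence of statements (i), (ii) and (iii) to Theorem \ref{Cmain} through the substitution $g=-f$, and then to attach the extra characterization (iv) by a direct two-sided argument. Writing $g=-f$, we have $f\le 0$ if and only if $g\ge 0$, and since $\|g\|_{C^{0,\beta}(\Omega)}=\|f\|_{C^{0,\beta}(\Omega)}$ also $f\in C^{0,\beta}(\Omega)$ if and only if $g\in C^{0,\beta}(\Omega)$. Because $|f(y)|=|g(y)|$ and $|f|_{\Omega(x_{0},\rho)}=|g|_{\Omega(x_{0},\rho)}$, one has the pointwise identities $f(x)+|f(y)|=-\bigl(g(x)-|g(y)|\bigr)$ and $f(x)+|f|_{\Omega(x_{0},\rho)}=-\bigl(g(x)-|g|_{\Omega(x_{0},\rho)}\bigr)$. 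Under these identities statement (ii) for $f$ becomes exactly statement (2) of Theorem \ref{Cmain} for $g$, and, for each fixed $p$, statement (iii) for $f$ becomes statement (3) for $g$. Since Theorem \ref{Cmain} supplies the chain $(1)\Leftrightarrow(2)\Leftrightarrow(3)$ for $g$, translating back yields $(\mathrm{i})\Leftrightarrow(\mathrm{ii})\Leftrightarrow(\mathrm{iii})$ for $f$; here one uses (iii) with $p=1$ to reach (i), whereas (i) delivers the bound for all $0<p<\infty$ with a single constant through the pointwise estimate $\bigl|f(x)+|f|_{\Omega(x_{0},\rho)}\bigr|\le C\rho^{\beta}$.

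It remains to connect (iv) to this chain, which I would do by proving $(\mathrm{i})\Rightarrow(\mathrm{iv})$ and $(\mathrm{iv})\Rightarrow(\mathrm{i})$. The first implication is immediate: when $f\le 0$ we have $|f(x)|=-f(x)$, so $\bigl||f(x)|+f_{\Omega(x_{0},\rho)}\bigr|=\bigl|f(x)-f_{\Omega(x_{0},\rho)}\bigr|$, and H\"{o}lder continuity gives the pointwise bound $\bigl|f(x)-f_{\Omega(x_{0},\rho)}\bigr|\le \frac{1}{|\Omega(x_{0},\rho)|}\int_{\Omega(x_{0},\rho)}|f(x)-f(y)|\,dy\le C\rho^{\beta}$, from which (iv) follows for every $p$ with one constant.

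The converse $(\mathrm{iv})\Rightarrow(\mathrm{i})$ is the heart of the matter and splits into two steps. First, taking $p=1$ in (iv) and using the identity $|f|_{\Omega(x_{0},\rho)}+f_{\Omega(x_{0},\rho)}=\frac{2}{|\Omega(x_{0},\rho)|}\int_{\Omega(x_{0},\rho)}f^{+}(y)\,dy\ge 0$ together with the triangle inequality, we obtain $\frac{1}{|\Omega(x_{0},\rho)|}\int_{\Omega(x_{0},\rho)}f^{+}(y)\,dy\le C\rho^{\beta}$; letting $\rho\to 0$ and invoking the Lebesgue differentiation theorem forces $f^{+}=0$ almost everywhere, that is $f\le 0$. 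Second, once $f\le 0$ is known, the integrand in (iv) equals $\bigl|f(x)-f_{\Omega(x_{0},\rho)}\bigr|$, so (iv) collapses to the classical Campanato condition $\frac{1}{|\Omega(x_{0},\rho)|}\int_{\Omega(x_{0},\rho)}\bigl|f(x)-f_{\Omega(x_{0},\rho)}\bigr|^{p}\,dx\le C\rho^{p\beta}$, which by the Campanato--Meyers characterization recalled in item (iv) of the Introduction---equivalently, by applying Theorem \ref{Cmain} to $g=-f$ at $p=1$---yields $f\in C^{0,\beta}(\Omega)$. I expect this first step, namely extracting the sign condition $f\le 0$ from an averaged bound on the asymmetric quantity $|f|+f_{\Omega(x_{0},\rho)}$, to be the only genuinely delicate point; all the other implications are either pointwise H\"{o}lder estimates or direct transcriptions of Theorem \ref{Cmain} under the reflection $g=-f$.
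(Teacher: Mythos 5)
Your proposal is correct, and for the equivalence of (i), (ii), (iii) it is exactly the paper's route: the paper offers no argument beyond the remark that $f\in \mathcal{\bar{L}}^{p,\lambda}$ if and only if $-f\in \mathcal{\tilde{L}}^{p,\lambda}$, i.e.\ the reflection $g=-f$ transporting Theorem \ref{Cmain} to the nonpositive setting, which is precisely your first paragraph. Where you genuinely add something is item (iv): under $g=-f$ the quantity $\bigl||f(x)|+f_{\Omega(x_{0},\rho)}\bigr|$ becomes $\bigl||g(x)|-g_{\Omega(x_{0},\rho)}\bigr|$, which does \emph{not} match any of the three statements of Theorem \ref{Cmain} (it is the H\"{o}lder analogue of item (iii) of Theorem \ref{def} from the BMO section), so the paper's one-line reflection leaves (iv) unproved. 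Your two-step argument supplies exactly the missing piece and is sound: from (iv) with $p=1$ and the identity $|f|_{\Omega(x_{0},\rho)}+f_{\Omega(x_{0},\rho)}=\frac{2}{|\Omega(x_{0},\rho)|}\int_{\Omega(x_{0},\rho)}f^{+}(y)\,dy\geq 0$ you get $f^{+}=0$ a.e.\ by Lebesgue differentiation (legitimate here because the measure-density assumption on $\Omega$ makes the sets $\Omega(x_{0},\rho)$ shrink nicely --- the same point the paper itself uses when concluding $f^{-}=0$ a.e.\ in Theorem \ref{Cmain}); once $f\leq 0$, the integrand in (iv) equals $|f(x)-f_{\Omega(x_{0},\rho)}|$ and Campanato--Meyers (or Theorem \ref{Cmain} applied to $g=-f$) finishes. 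Your handling of the quantifier over $p$ is also the right one: the forward implications go through a pointwise bound $\leq C\rho^{\beta}$, hence hold for all $0<p<\infty$ with one constant, while the converses only need $p=1$.
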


\vspace{0.3cm}
{\bf Acknowledgments} We would like to thank the anonymous referee for his/her comments.

\vspace{0.5cm}

\noindent Dinghuai Wang, Lisheng Shu

\smallskip

\noindent  School of Mathematics and Statistics, Anhui Normal University, Wuhu, 241002, China

\smallskip

\noindent {\it E-mails}: \texttt{Wangdh1990@126.com; shulsh@mail.ahnu.edu.cn.}

\end{document}